\crefname{counterexample}{counterexample}{counterexamples}
\Crefname{counterexample}{Counterexample}{Counterexamples}
\crefname{conjecture}{conjecture}{conjectures}
\Crefname{conjecture}{Conjecture}{Conjectures}
\numberwithin{theorem}{section}
\newcommand{\lk}{\operatorname{lk}}
\newcommand{\rk}{\operatorname{rank}}
\newcommand{\F}{\mathbb F}
\newcommand{\N}{\mathbb N}
\newcommand{\Vertices}{ V}
\newcommand{\sep}{\tau}
\newcommand{\uu}{\underline}
\newcommand{\sphere}{S}
\newcommand{\altsphere}{T}
\newcommand{\manifold}{M}
\newcommand{\altmanifold}{N}
\title{Average Betti numbers of induced subcomplexes \\ in triangulations of manifolds%
\thanks{The three authors were supported by grant EVF-2015-230 of the Einstein Foundation Berlin and, while they were in residence at the Mathematical Sciences Research Institute in Berkeley, California during the Fall 2017 semester, by the Clay Institute and the National Science Foundation (Grant No. DMS-1440140). Santos is also supported by grants MTM2014-54207-P and MTM2017-83750-P of the Spanish Ministry of Science.}
}
\author{Giulia Codenotti \qquad Jonathan Spreer\\
\small Institut f\"ur Mathematik\\[-0.8ex]
\small Freie Universit\"at Berlin, Germany\\
\small\tt codenotti@math.uni-frankfurt.de\qquad
\tt jonathan.spreer@sydney.edu.au\\
\and
Francisco Santos\\
\small Department of Mathematics, Statistics and Computer Science\\[-0.8ex]
\small University of Cantabria, Santander, Spain\\
\small\tt francisco.santos@unican.es
}
\begin{document}

\maketitle

\begin{abstract}
We study a variation of Bagchi and Datta's $\sigma$-vector of a simplicial complex $C$, whose entries are defined as weighted averages of Betti numbers of induced subcomplexes of $C$. We show that these invariants satisfy an Alexander-Dehn-Sommerville type identity, and behave nicely under natural operations on triangulated manifolds and spheres such as connected sums and bistellar flips.

In the language of commutative algebra, the invariants are weighted sums of graded Betti numbers of the Stanley-Reisner ring of $C$. This interpretation implies, by a result of Adiprasito, that the Billera-Lee sphere maximizes these invariants among triangulated spheres with a given $f$-vector. 
For the first entry of $\sigma$, we extend this bound to the class of strongly connected pure complexes.

  As an application, we show how upper bounds on $\sigma$ can be used to obtain lower bounds on the $f$-vector of triangulated $4$-manifolds with transitive symmetry on vertices and prescribed vector of Betti numbers.

\noindent
{\bf Keywords}: triangulations of manifolds, $\sigma$-vector, $\mu$-vector, $\sep$-vector, graded Betti numbers, stacked and neighborly spheres, Billera-Lee polytopes, simplicial complexes, perfect elimination order.
\end{abstract}

\section{Introduction}
In this article we investigate a combinatorial invariant of simplicial complexes that we call the $\sep$-vector, defined as follows: for a simplicial complex $C$ with ground set $V$, and for each $i=\{-1,0,1,2,\dots\}$, 
\begin{equation*}
{\sep}_i(C)= \frac1{|V|+1}\sum_{W\subseteq \Vertices} \frac{\tilde{\beta_i}(C[W])}{\binom{|V|}{|W|}}.
\end{equation*}
Here $C[W]$ denotes the subcomplex induced by a set $W\subseteq V$, and $\tilde \beta_i$ is the reduced $i$-th Betti number with respect to a certain field $\F$. The $\sep$-vector depends on the choice of $\F$ but our results are independent of $\F$.

Put differently, $\sep_i$ is the weighted average of the $i$-th reduced Betti number of all induced subcomplexes $C[W]$, with respect to weights that are uniform on subsets $W\subseteq V$ of equal size and add up to $\frac1{|V|+1}$ for each size $j\in \{0,\dots, |V|$\}.

The purpose of this article is twofold. On the one hand we give a comprehensive overview of research done on the $\sep$-vector, thereby unifying notation and language. On the other hand we add new results about this combinatorial invariant simplifying its study.

\subsubsection*{Previous work} The $\sep$-vector is called the \emph{normalized $\sigma$-vector} by Murai and Novik in \cite{Novik17NormalizedSigma}, where it is denoted $\tilde \sigma$. It is a variation of the \emph{$\sigma$-vector} introduced by Bagchi and Datta \cite{Bagchi14StellSpheresTightness} and studied in~\cite{Bha2016,Burton14SepIndex2Spheres}. Its original motivation was the study of \emph{tight} triangulations of manifolds, that is, triangulations with the property that all the homomorphisms induced in homology by inclusions of induced subcomplexes are injective (see \Cref{ssec:tightness}, in particular \Cref{defi:tight} for more details):

\begin{theorem}[\protect{\cite[Theorem 1.8(c), Corollary 1.9]{Bha2016}}; \protect{\cite[Theorems 2.6(b) and 2.10]{Bagchi14StellSpheresTightness}} for the $2$-neighborly case] 
 \label{thm:bagchi-intro}
For a simplicial $d$-manifold $M$ with vertex set $V$ and for every $i\in \{0,\dots,d\}$, let 
\[
\mu_i(M):=\sum_{v\in V} \sep_{i-1}(\lk_M(v)).
\] 
Then $\tilde\beta_i(M) \le \mu_i(M)$, with equality occurring for all $i$ if and only if $M$ is $\F$-tight.
\end{theorem}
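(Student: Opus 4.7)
The plan is to use a Mayer--Vietoris telescope combined with an averaging argument over vertex orderings.

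First, for any $v \in W \subseteq V$, decompose $M[W] = M[W \setminus \{v\}] \cup \overline{\operatorname{st}}_{M[W]}(v)$, whose intersection is $\lk_M(v)[W \setminus \{v\}]$, interpreted as the induced subcomplex of $\lk_M(v)$ on those vertices of $W$ adjacent to $v$. Since $\overline{\operatorname{st}}_{M[W]}(v)$ is a cone with apex $v$, it is contractible, and the reduced Mayer--Vietoris sequence collapses to
\begin{equation*}
\cdots \to \tilde H_i(\lk_M(v)[W\setminus\{v\}]) \xrightarrow{f_i} \tilde H_i(M[W\setminus\{v\}]) \xrightarrow{g_i} \tilde H_i(M[W]) \xrightarrow{h_i} \tilde H_{i-1}(\lk_M(v)[W\setminus\{v\}]) \to \cdots
\end{equation*}
A rank-nullity computation yields the pointwise bound $\tilde\beta_i(M[W]) \le \tilde\beta_i(M[W \setminus \{v\}]) + \tilde\beta_{i-1}(\lk_M(v)[W \setminus \{v\}])$, with equality at level $i$ if and only if $f_i = 0$ and $f_{i-1} = 0$, equivalently, $g_i$ and $g_{i-1}$ are both injective.

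Second, fix a total order $\pi$ on $V$ and set $V_k^\pi := \{\pi(1), \dots, \pi(k)\}$. Telescoping the pointwise bound along $V_0^\pi \subset V_1^\pi \subset \cdots \subset V_n^\pi = V$ gives
\begin{equation*}
\tilde\beta_i(M) \le \sum_{k=1}^{n} \tilde\beta_{i-1}(\lk_M(\pi(k))[V_{k-1}^\pi]).
\end{equation*}

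Third, average over uniformly random orderings $\pi$. Introducing an independent uniform cut $k \in \{1, \dots, n\}$, the pair $(\pi(k), V_{k-1}^\pi)$ has $\pi(k)$ uniform on $V$ and, conditional on $\pi(k) = v$, $V_{k-1}^\pi$ uniform-size-uniform on $V \setminus \{v\}$ (the same distribution appearing in the definition of $\sep$). A coupling with iid $[0,1]$ labels shows that marginalizing a uniform-size-uniform subset of one ground set to a subset yields the uniform-size-uniform distribution on the smaller set. Applying this with the smaller ground set $V(\lk_M(v)) \subseteq V \setminus \{v\}$, and observing that $\lk_M(v)[V_{k-1}^\pi]$ only depends on $V_{k-1}^\pi \cap V(\lk_M(v))$, the conditional expectation of $\tilde\beta_{i-1}(\lk_M(\pi(k))[V_{k-1}^\pi])$ given $\pi(k) = v$ equals $\sep_{i-1}(\lk_M(v))$. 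Averaging the telescoped bound over $\pi$ therefore gives
\begin{equation*}
\tilde\beta_i(M) \le \sum_{v \in V} \sep_{i-1}(\lk_M(v)) = \mu_i(M).
\end{equation*}

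Fourth, for the equality case: if $\tilde\beta_i(M) = \mu_i(M)$ for every $i$, equality of an average of upper bounds (indexed by $\pi$) forces equality in the telescoped bound for each $\pi$; since the pointwise Mayer--Vietoris bounds on the increments sum exactly to $\tilde\beta_i(M)$ precisely when each is tight, this forces equality in every single Mayer--Vietoris step at every level. By Step 1, this means $g_j$ is injective for every $j$ and every incremental inclusion $M[W \setminus \{v\}] \hookrightarrow M[W]$; composing along any chain of increments, every inclusion $M[W'] \hookrightarrow M[W]$ with $W' \subseteq W \subseteq V$ induces injections on all reduced homology, which is $\F$-tightness of $M$. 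The converse direction runs the same chain of implications in reverse.

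The main obstacle is Step 3. Averaging over orderings of $V$ naturally produces weights normalized by $|V|$, while $\sep_{i-1}(\lk_M(v))$ uses weights normalized by $|V(\lk_M(v))|$, which may be strictly smaller when $v$ is not adjacent to every other vertex of $M$. The probabilistic coupling by iid uniform labels reconciles these: the uniform-size-uniform distribution on a ground set arises as a marginal of the same distribution on any larger ground set, so restriction to $V(\lk_M(v))$ preserves the averaging weights.
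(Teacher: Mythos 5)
Your overall strategy --- adding the vertices one at a time, bounding the jump in $\tilde\beta_i$ by $\tilde\beta_{i-1}$ of the restricted link via Mayer--Vietoris against the contractible closed star, and then averaging the telescoped bound over all orderings so that the weights $\frac{1}{n\binom{n-1}{|W|}}$ reassemble into $\sep_{i-1}$ of each link --- is exactly the mechanism behind the cited proofs of Bagchi and Bagchi--Datta; the paper itself does not reprove the theorem. Steps 1--3 are correct: the exchangeability and consistency of the measure $P_\Vertices$ (\Cref{thm:ground_set}, \Cref{rem:probability}) is precisely what makes the average land on $\sep_{i-1}(\lk_M(v))$ independently of the ground set, and your coupling argument is a legitimate way to see this. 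So the inequality $\tilde\beta_i(M)\le\mu_i(M)$ is established.

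The equality analysis, however, breaks at $i=0$, and the break is not cosmetic. In the very first step of every telescope ($k=1$, $W=\{\pi(1)\}$) the intersection $\lk_M(\pi(1))[\emptyset]$ is the empty complex, so the bound on the level-$0$ increment is $\tilde\beta_{-1}(\lk_M(\pi(1))[\emptyset])=1$ while the actual increment $\tilde\beta_0(M[V_1^\pi])-\tilde\beta_0(M[V_0^\pi])$ is $0$; equivalently, the map $f_{-1}$ in your sequence at that step is an isomorphism $\F\to\F$, never zero. Hence the level-$0$ telescoped bound overshoots by exactly $1$ for every ordering, so $\mu_0\ge\tilde\beta_0(M)+1$ for every nonempty manifold, and ``equality for all $i$'' in the literal sense of the statement never occurs. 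In particular your converse direction (tight $\Rightarrow$ equality) fails at $i=0$: tightness controls the maps $f_j$ only for $j\ge 0$ and cannot kill $f_{-1}$ at $k=1$. This is consistent with the paper's own example in \Cref{ssec:tightness}, where the tight triangulation has $\mu$-vector $(1,1,5,1,1)$ matching the \emph{unreduced} Betti numbers, so that $\mu_0=1>0=\tilde\beta_0$. A correct write-up must isolate the $k=1$ contribution and state the $i=0$ case with the unreduced $\beta_0$ (equivalently with a $+1$ correction); once that is done, your argument for $i\ge 1$, together with the forced injectivity of all incremental maps $g_j$ with $j\ge0$ and the composition/factoring argument relating incremental injectivity to injectivity of $M[W]\hookrightarrow M$, does yield the tightness equivalence.
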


Moreover, the $\sep$-vector is also interesting from a more combinatorial viewpoint. For instance, it produces the following characterization of stacked spheres among normal pseudo-manifolds (see~\Cref{ssec:stacked} for more details):

\begin{theorem}[Murai~\protect{\cite[Corollary 5.8.(ii)]{Murai15GradedBettiNumbers}} for the general case, 
Burton, Datta, Singh and Spreer \protect{\cite[Theorem 1.1]{Burton14SepIndex2Spheres}} for $2$-spheres]
\label{thm:BDSS}
Let $\sphere$ be a triangulated normal pseudo-manifold of dimension $d\ge 2$ with $n$ vertices. Then 
\[
\sep_0(\sphere) \le \frac{\binom{n-d-1}{2}}{(n+1)\binom{d+3}{2}},
\]
with equality if and only if $\sphere$ is a stacked $d$-sphere.
\end{theorem}

This result appears also as Lemma 7.2 in \cite{Novik17NormalizedSigma}. Lemma 4.3 in \cite{Murai17FaceNumbersFundGroup} implies a version of it for relative complexes.
Bagchi in~\cite{Bha2016} conjectures a similar bound valid for all $0\leq i \leq d$, which has~\Cref{thm:BDSS} as the case $i=0$, and proves it for certain spheres he refers to as {\em tame}.

\medskip

The $\sep$-vector also has a commutative algebra interpretation as observed in~\cite{Murai15GradedBettiNumbers}. Recall that to a simplicial complex $C$ one associates its Stanley-Reisner ideal $I(C)$. The minimal resolution of $I(C)$ gives rise to a triangular array of \emph{graded Betti numbers} $r_{i,j}$ for $-1 \le i < j \le |V|$. Hochster's formula says that each $r_{i,j}$ equals the sum of $\tilde\beta_{j-i-2} (C[W])$ ranging over all subsets $W\subseteq V$ of size $j$ (see~\Cref{eq:Hochster}). 

In particular, the entries of the $\sep$-vector are non-negative linear combinations of the graded Betti numbers, which implies that known upper and lower bounds for the latter apply to the former. It was shown by Migliore and Nagel~\cite{MiglioreNagel03Polytopes,Nagel08EmptySimplices} that, among all the simplicial \emph{polytopal} spheres with a given $f$-vector, the Billera-Lee spheres~\cite{Billera81ProofSuffMcMullenCondFVec} maximize every $r_{i,j}$, and hence also $\tau_i$, over any field of characteristic zero; see also \cite[Sect.~3]{Murai17FaceNumbersFundGroup}. In a preprint version of this paper we conjectured this statement for arbitrary triangulated spheres. This was later proven by Adiprasito for fields of characteristic zero as a by-product of his, as yet unpublished, proof of the $g$-theorem for spheres \cite{Adiprasito18Lefschetz}. 

 \begin{theorem}[Adiprasito \protect{\cite[Section 1.6]{Adiprasito18Lefschetz}}]
      \label{conj:general-intro}
      Let $\sphere$ be a $d$-sphere and let $\altsphere$ be the Billera-Lee $d$-sphere with $f$-vector $f(\sphere)$. Then $r_{i,j}(\sphere) \leq r_{i,j}(\altsphere)$ for all $i$, where $r_{i,j}$ is computed with respect to a field of characteristic zero. As a consequence, $\sep_i(\sphere) \leq \sep_i(\altsphere)$. 
    \end{theorem}

\subsubsection*{Outline of the paper}
In \Cref{sec:chordal} we look at $\sep_0$ for the class of pure complexes and prove a version of \Cref{conj:general-intro} in this context: Among all pure strongly connected complexes with a given dimension $d\ge 3$ and numbers of vertices and edges $(f_0,f_1)$ the Billera-Lee balls with those parameters (which exist since a strongly connected $d$-manifold has $f_1\in [df_0-\binom{d+1}2, \binom{f_0}2]$) maximize $\sep_0$ (\Cref{cor:billeralee} and \Cref{thm:upperbound}). The proof is elementary and uses the fact that the $\sep_0$ value, an invariant of the $1$-skeleton, can be upper bounded in terms of the in-degree sequence associated to an ordering of the vertices, with equality in the upper bound if and only if the ordering is a \emph{partial elimination order} (\Cref{lemma:peo}).

The differences between the $\sep$- and the original $\sigma$-vector from~\cite{Bha2016,Bagchi14StellSpheresTightness} are a factor of ${|V|+1}$ in the denominator and that the case $W=\emptyset$ is treated differently than in \cite{Bha2016,Bagchi14StellSpheresTightness}. These two (minor) differences make the definition more natural. For instance, they simplify \Cref{thm:bagchi-intro} (compare our definition of $\mu$ in \Cref{thm:bagchi-intro} to \cite[Definition 1.6]{Bha2016} and \cite[Definition 2.1]{Bagchi14StellSpheresTightness}, where a distinction needs to be made for the cases $i=0,1$).

More importantly, the $\sep$-vector is ``independent of the ground set'', as already shown by Murai and Novik in \cite[Lemma 4.4]{Novik17NormalizedSigma} (we offer a proof in \Cref{thm:ground_set}).
By this we mean the following; suppose that we have a complex $C$ with vertex set $V_0$ but which can also be considered as a complex on a bigger ground set $V \supsetneq V_0$. This happens naturally for each link $\lk_M(v)$ appearing in \Cref{thm:bagchi-intro}, where $V_0$ is the set of vertices adjacent to $v$ in the $1$-skeleton of $M$. Then it makes sense to calculate the $\sep$- or $\sigma$-vector of this complex both with respect to $V$ and with respect to $V_0$. The $\sep$-vector is independent of this choice (but the same is not true for the original $\sigma$-vector). 
The underlying reason for this nice property is that our normalization causes the weights used on $2^V$ to define an ``exchangeable probability measure'' (see \Cref{rem:probability}).

This independence of the ground set simplifies computations in several places. For example, in the context of  connected sums it has the consequence that one can apply Mayer-Vietoris type sequences to all induced subcomplexes simultaneously, to easily obtain the following statement in \Cref{ssec:connsum}.

\begin{theorem}[\Cref{thm:connectedsum}]
\label{thm:connectedsum-intro}
Let $\manifold_1$ and $\manifold_2$ be  simplicial $d$-manifolds, $d\ge 2$. Then
\begin{align*}
 {\sep}_i(\manifold_1 \# \manifold_2) &= \sep_i(\manifold_1) + \sep_i(\manifold_2), & i\in \{1,\dots,d-2\}  \qquad \text{ and }\\
  {\sep}_j(\manifold_1 \# \manifold_2) &=  \sep_j(\manifold_1) + \sep_j(\manifold_2) +  c(d,n_1,n_2) ,
 & j \in \{0,d-1\},
\end{align*}
where $n_i=f_0(\manifold_i)$ is the number of vertices in $\manifold_i$ and
\[
c(d,n_1,n_2):= \frac{1}{d+2} - \frac{1}{n_1 +1}- \frac{1}{n_2 +1} +\frac{1}{n_1+n_2-d-1}.
\]
\end{theorem}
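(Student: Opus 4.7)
The plan is to apply Mayer--Vietoris to each induced subcomplex of $\manifold_1 \# \manifold_2$ and then average the resulting identity over all subsets of the common ground set. Let $V_0$ denote the $d+1$ vertices of the $d$-simplex $\sigma$ along which the connected sum is glued, $V_i$ the vertex set of $\manifold_i$, and $V := V_1 \cup_{V_0} V_2$ the vertex set of $\manifold_1 \# \manifold_2$. Using \Cref{thm:ground_set}, all three complexes can be viewed on the common vertex set $V$. Writing $W_i := W \cap V_i$ and
\[
\epsilon_i(W) := \tilde\beta_i((\manifold_1\#\manifold_2)[W]) - \tilde\beta_i(\manifold_1[W_1]) - \tilde\beta_i(\manifold_2[W_2]),
\]
the theorem reduces to evaluating
\[
\sep_i(\manifold_1\#\manifold_2) - \sep_i(\manifold_1) - \sep_i(\manifold_2) = \frac{1}{|V|+1} \sum_{W \subseteq V} \frac{\epsilon_i(W)}{\binom{|V|}{|W|}}.
\]

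To compute $\epsilon_i(W)$, set $X_W := \manifold_1[W_1] \cup \manifold_2[W_2]$ inside $\manifold_1 \cup_\sigma \manifold_2$; its two pieces intersect in the simplex $\sigma[W \cap V_0]$, which is contractible whenever non-empty. Three cases arise. (i) If $\emptyset \neq W \cap V_0 \neq V_0$, then $(\manifold_1\#\manifold_2)[W] = X_W$ and Mayer--Vietoris gives $\epsilon_i(W) = 0$ for every $i$. (ii) If $W \cap V_0 = \emptyset$ with both $W_1, W_2 \neq \emptyset$, then $X_W$ is a disjoint union, so $\epsilon_0(W) = +1$ and $\epsilon_i(W) = 0$ for $i \geq 1$. (iii) If $W \cap V_0 = V_0$, then $(\manifold_1\#\manifold_2)[W] = X_W \setminus \{\sigma\}$, and removing a $d$-face only affects degrees $d-1$ and $d$, hence $\epsilon_i(W) = 0$ for $i \leq d-2$; moreover, as soon as both $W_1 \subsetneq V_1$ and $W_2 \subsetneq V_2$, proper induced subcomplexes of closed $d$-manifolds have vanishing top homology, so $\tilde\beta_d(X_W) = 0$, and the long exact sequence of the pair $(X_W, X_W\setminus\{\sigma\})$ turns $[\partial\sigma]$ into a new class, producing $\epsilon_{d-1}(W) = +1$. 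This already establishes the additive formula for $i \in \{1,\ldots, d-2\}$.

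The non-additive corrections at $i \in \{0, d-1\}$ are tied by the complementation bijection $W \leftrightarrow V \setminus W$, which sends the support $\{W : W\cap V_0 = \emptyset,\, W_1, W_2 \neq \emptyset\}$ of $\epsilon_0$ bijectively onto the support $\{W : W \supseteq V_0,\, W_1 \neq V_1,\, W_2 \neq V_2\}$ of $\epsilon_{d-1}$ while preserving $\binom{|V|}{|W|}$. Hence both corrections equal a common value $c(d, n_1, n_2)$. To evaluate it, inclusion--exclusion on the non-emptiness conditions rewrites the sum as $S - T_1 - T_2 + T_0$, where $S$, $T_1$, $T_2$, $T_0$ are the sums of $\binom{|V|}{|W|}^{-1}$ over $W \subseteq V \setminus V_0$, $V_2 \setminus V_0$, $V_1 \setminus V_0$, and $\{\emptyset\}$, respectively. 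Each of these is of the form $\sum_{k=0}^{m}\binom{m}{k}/\binom{n}{k}$ and collapses via the classical identity $\sum_{k=0}^{m}\binom{m}{k}/\binom{n}{k} = (n+1)/(n-m+1)$; simplifying the resulting expression yields the stated formula for $c(d, n_1, n_2)$.

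The main obstacle is the topological step in case (iii): a careful application of the long exact sequence of $(X_W, X_W \setminus \sigma)$ is needed to see that removing $\sigma$ genuinely creates a new $(d-1)$-cycle precisely when both $W_i$ are proper, and that the remaining subcases with $W_i = V_i$ affect only $\tilde\beta_d$ and therefore lie outside the range covered by the theorem. After the case analysis, the closed form for $c$ follows by routine binomial bookkeeping, made possible by the ground-set independence from \Cref{thm:ground_set} which aligns the three $\sep$-vectors on a single ambient set $V$.
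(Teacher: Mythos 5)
Your proposal is correct and follows essentially the same route as the paper's proof of \Cref{thm:connectedsum}: align all three complexes on one ground set via \Cref{thm:ground_set}, run a Mayer--Vietoris case analysis on $W\cap D$ to locate the supports of the corrections at $i=0$ and $i=d-1$, match those supports under complementation $W\mapsto V\setminus W$, and evaluate the resulting sum by inclusion--exclusion and the identity $\sum_k\binom{m}{k}/\binom{n}{k}=(n+1)/(n-m+1)$, which is exactly the special case of \Cref{lemma:binomial_quotients} used in the paper. The one soft spot is the subcase $V_i\subseteq W$, where you assert that only $\tilde\beta_d$ is affected; this implicitly uses that $\partial D$ bounds in $M_i\setminus\{D\}$ (i.e.\ $\F$-orientability of $M_i$), but the paper's own proof makes the identical implicit assumption at the same point, so you have reproduced the argument faithfully.
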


In the special case where $M_2$ is the boundary of a $(d+1)$-simplex, the change from $M_1$ to $M$ is a so-called \emph{bistellar flip of type $(1,d+1)$}. In particular,  \Cref{thm:connectedsum-intro} yields a formula for how the $\sep$-vector changes under such flips (\Cref{cor:stacking}) and for the $\sep$-vector of stacked spheres, which are the simplicial spheres that can be obtained form the boundary of a simplex via such flips. 
For the more general case of $(i,j)$-flips in manifolds there is no closed formula on how the $\sep$-vector changes, but some partial results can be stated, see~\Cref{ssec:bistflips}.

For neighborly and stacked manifolds some components of the $\sep$-vector become zero. More precisely, for a $k$-neighborly complex all $\sep_i$ with $i\le k-2$ are zero (see \Cref{prop:neighborly}) and for a $k$-stacked $d$-sphere all $\sep_i$ with $k \leq i \leq d-k-1$ are zero (see \Cref{thm:kstacked}). The first result is due to Bagchi and Datta \cite[Lemma 3.9]{Bagchi14StellSpheresTightness}, the second is due to Bagchi \cite[Lemma 3]{Bagchi15TightnessCrit}.
Going further, we also look at manifolds that are almost $2$-neighborly or almost $1$-stacked and prove bounds for the entries of their $\sep$-vectors (\Cref{thm:nearlyneighb,thm:g21_sep0,thm:g21_sep1}). In particular, \Cref{ssec:nstacked} completely characterizes the possible $\sep$-vectors of spheres with $g_2=1$, following a classification of such spheres by Nevo and Novinsky \cite{NevoNovinsky11}. 

For arbitrary triangulated $3$-spheres, the topological Alexander duality implies that the $\sep$-vector is symmetric, i.e., that we have $\sep_i (\sphere) = \sep_{d-i-1} (\sphere)$. Together with the Dehn-Sommerville relations this yields a formula relating the first half of the $\sep$- and $h$-vector to each other (see \Cref{cor:oddsphere}). For example, for $3$-spheres Alexander duality implies that any of $\sep_0, \sep_1, \sep_2$ (together with the $f$-vector or, equivalently, the $h$-vector) gives the other two, via the following relations (\Cref{cor:3d}):
\begin{equation}
\label{eq:sep_for_3spheres}
\sep_0=\sep_2, \quad  {\sep}_0 - {\sep_{1}} + \sep_2  = \frac{h_1(h_1+1)}{10(h_1+5)}  - \frac{h_2}{30}.
\end{equation}

In particular, this implies that the $\sep$-vector of a triangulated $3$-sphere is determined by $\sep_0$, and thus can be obtained purely combinatorially by counting connected components of induced subcomplexes.

Finally, moving towards more topological applications of the $\sep$-vector, we use the upper bound for $\sep$ in dimension three to construct lower bounds for triangulations of $4$-manifolds with vertex transitive symmetry and prescribed vector of Betti numbers (see \Cref{ssec:4mflds}). These lower bounds are interesting because, when attained, they not only provide a minimal symmetric triangulation, but also one that is tight. Moreover, we present exhaustive computations for spheres with up to ten points in dimension three.
We also present an example of a tight triangulation of a $4$-manifold whose vertex links do not have maximal $\sep$-vectors amongst all $3$-spheres with a given $f$-vector (see \Cref{ex:tight}). This is surprising in light of the Lutz-K\"uhnel conjecture, see \Cref{conj:LutzKuehnel} and \cite{Kuehnel99CensusTight}.

\subsubsection*{Acknowledgements}
We thank Karim Adiprasito, Bhaskar Bagchi, Satoshi Murai, Isabella Novik, Jos\'e Alejandro Samper and Hailun Zheng for useful comments on a previous version of this paper. We also thank Moritz Firsching for helpful discussions regarding the computations in Section 6.

\section{Setup and first properties of $\sep$}

\subsection{Notation and conventions}
\label{ssec:notation}

In this section we briefly introduce most of the combinatorial/topological concepts used all throughout the paper and set  notation and conventions.

\subsubsection*{Simplicial complexes}

An (abstract) \emph{simplicial complex} $C$ on a ground set $V$ (typically $V=[n]$ for an $n\in \N$) is a family of subsets of $V$ closed under taking subsets.  
The elements of $C$ are called \emph{faces}, and the \emph{dimension} of a face is its size minus one. Maximal faces are called \emph{facets} and faces of dimensions 0 and  1 are called \emph{vertices} and \emph{edges}, respectively. 
Observe that the set of vertices can be properly contained in $V$, that is, not every element of $V$ is necessarily a vertex of $C$.
 The \emph{empty complex} has only one face, the empty set. 
 The $k$-skeleton of $C$ is the subcomplex consisting of faces of dimension at most $k$. 
 The dimension of $C$ is the maximum dimension of a face, and a complex of dimension $d$ is sometimes called a \emph{$d$-complex}.

The following properties that a simplicial complex may have are each more restrictive than the previous one:

\begin{itemize}
\item $C$ is \emph{pure} of dimension $d$ if all facets have dimension $d$. In this case the faces of dimension $(d-1)$ are called \emph{ridges} and the \emph{adjacency graph} of $C$, sometimes also referred to as the {\em dual graph} of $C$, is defined as having as vertices the facets of $C$ and as edges the pairs of facets that share a common ridge. This is different from the \emph{$1$-skeleton} of $C$, which is also a graph and sometimes called the \emph{graph} of $C$.

\item A pure complex $C$ is called \emph{strongly connected} if its adjacency graph is connected. 

\item A \emph{(closed) pseudo-manifold} is a strongly-connected pure complex such that every ridge is contained in exactly two facets. In this case there exists a bijective correspondence between ridges of $C$ and edges in the adjacency graph. A \emph{pseudo-manifold with boundary} is a strongly-connected pure complex such that every ridge is contained in at most two facets. The ridges contained in only one facet, together with all their faces, form the \emph{boundary} of $C$, which is a pure $(d-1)$-complex.

\item A \emph{(closed) triangulated manifold}, or simply a \emph{(closed) manifold}, is a simplicial complex whose topological realization is a (closed) manifold. We say the manifold is a sphere, a ball, etc if its topological realization is.

\item  A \emph{combinatorial manifold} is a simplicial complex in which the {\em link} of every vertex $v$, i.e., the boundary of the complex consisting of all facets of $C$ containing $v$ and their faces, is a triangulated sphere PL-homeomorphic to the standard sphere.
\end{itemize}

\subsubsection*{$f$-, $h$-, and $g$-vectors}
The $f$-vector of a  $d$-complex is defined as $f(C)=(f_{-1},\dots,f_{d})$ where $f_i$ is the number of faces of size $i+1$. 
For a pure $d$-complex $C$ with $f$-vector $(f_{-1},f_0, \ldots , f_d)$ one defines the \emph{$h$-vector} $h(C)=(h_0,\dots, h_{d+1})$ by
  \[ h_{k}=\sum _{i=0}^{k} (-1)^{k-i} {\binom {d+1-i}{k-i}} f_{i-1}.\]
The $h$-vector contains the same information as the $f$-vector since the above formula can be reversed, but it has nicer properties; for example, the $h$-vector of a closed manifold satisfies the Klee-Dehn-Sommerville equations \cite{Klee64CombAnaloguePoincareDual}:
\[
h_{d+1-i} - h_i= (-1)^i \binom{d+1}{i} \left(\chi(C) - \chi(\sphere^{d})\right) \quad \text{for all } i.
\]
From the $h$-vector one can, in turn, define the $g$-vector $g(C)=(g_0,\dots, g_{d+1})$ as $g_i=h_i-h_{i-1}$. See \cite[Chapter 17]{HandbookDCG} for more details.

\subsubsection*{Betti numbers}

For any given base field $\F$ the \emph{reduced chain complex} of $C$ is the naturally defined sequence of linear maps
\[
\F^{f_d} \stackrel{\partial_d}{\to} \F^{f_{d-1}} \to\dots \to \F^{f_1}  \stackrel{\partial_1}{\to} \F^{f_0}  \stackrel{\partial_0}{\to} \F^{f_{-1}} = \F \to 0.
\]
The \emph{reduced Betti numbers} $\tilde \beta_d, \dots, \tilde \beta_0, \tilde \beta_{-1}$ of $C$ are the dimensions of the corresponding homology (or cohomology) groups. Equivalently, 
\[
\tilde \beta_i := f_i -\rk \partial_i - \rk \partial_{i+1}.
\]
Thus, the alternating sum of Betti numbers coincides with the alternating sum of $f$-vector entries; the \emph{Euler characteristic}  of $C$ equals that sum plus one:
\[
\chi(C) := \sum_{i=-1}^d (-1)^i \tilde\beta_i +1 = \sum_{i=-1}^i  (-1)^i f_i +1.
\]

\begin{remark} 
Observe that every complex has $f_{-1}=1$, for the empty face; in particular $\chi(C) =  \sum_{i=0}^i  (-1)^i f_i$.
\end{remark}

\subsubsection*{(Induced) subcomplexes, Alexander duality, Hochster's formula}

Let $C$ be a simplicial complex on a ground set $V$. A subcomplex of $C$ is a subset of the faces of $C$ that is itself a simplicial complex. For any $W\subseteq V$, the subcomplex of $C$ \emph{induced by $W$} is
\[
C[W] := \{F \in C : F \subseteq W\}
\]
and the \emph{deletion of $W$ in $C$} is the induced subcomplex $C[V\setminus W]$.
When $C$ is a $d$-sphere, the Betti numbers of these two subcomplexes are related by  \emph{Alexander duality}:
\[
\tilde \beta_i C[W] = \tilde \beta_{d-1-i} C[V\setminus W].
\]

\begin{remark}
In this paper we always mean Alexander duality in the above, topological, sense. This is related but not to be confused with the combinatorial version of Alexander duality as, for instance, stated in \cite{Bjoerner09CombAlexDuality}.
\end{remark}

Identifying the elements of $V$ with variables $x_1, \ldots , x_n$, the \emph{Stanley-Reisner} ideal $I_C$ of $C$ is the ideal in $S:=\F[x_1,\dots,x_n]$ generated by all monomials whose support is not a face in $C$. It has a minimal resolution which is unique modulo isomorphism
\[
S^{r_n} \to S^{r_{n-1}} \to \dots \to S^{r_0} \to I_C \to 0.
\]
The ranks $r_i$ appearing in the resolution can be refined as follows: The usual grading in $I_C$ induces a grading in all the $S^{r_{i}}$'s, so that we can write
\[
S^{r_{i}} = \bigoplus_{j=i+1}^n S^{r_{i,j}},
\] 
where $S^{r_{i,j}}$ is the part of $S^{r_{i}}$ of degree $j$.
By convention we take $r_{-1,0}= 1$ and $r_{-1,j}= 0$ for $j\ne 0$.
(The convention for $i=-1$ is justified by looking at the resolution of $S/I$ rather than $I$. That resolution ends in $\to S \to S/I \to 0$, where the generator in $S= S^{r_{-1}}$ has degree zero.)
The numbers $r_{i,j}=r_{i,j}(I_C)$ for $-1 \le i < j \le n$ are called the \emph{graded Betti numbers} of $I_C$. 

Hochster's formula~\cite[Corollary 5.12]{MillerSturmfels05CCA} implies that
\begin{equation}
\label{eq:Hochster}
r_{i,j}(I_C) = \sum_{\substack{W\subseteq \Vertices\\ |W|=j}} \tilde\beta_{j-i-2} (C[W]).
\end{equation}
In particular, $r_{i,n}(I_C) = \tilde\beta_{n-i-2}(C)$ and $r_{0,j}(C)$ equals the number of minimal non-faces of size $j$ in $C$, since these correspond to the generators of degree $j$ in $I_C$.

\subsection{The $\sep$-vector} 

In \cite{Bagchi14StellSpheresTightness}, Bagchi and Datta introduce the \emph{$\sigma$-vector} of a simplicial complex $C$. It is defined as follows:

\begin{definition}[$\sigma$-vector, Definition 2.1 in \cite{Bagchi14StellSpheresTightness}]
\label{defi:sigma}
  Let $C$ be a simplicial complex of dimension $d$ with vertex set $\Vertices$. For each $i=-1,\dots,d$ we define
  \[
    \sigma_i(C) = \sum_{W\subseteq \Vertices} \frac{\tilde\beta_i(C[W])}{{ |V| \choose |W|}}. 
  \]
Here, $\tilde\beta_i$ denotes the $i$-th reduced Betti number with respect to a certain field $\F$, that we omit. 
\end{definition}

In this article, we slightly adapt the definition of the $\sigma$-vector:

\begin{definition}
Let $C$ be a simplicial complex of dimension $d$ on a ground set $\Vertices$ of size $|\Vertices|=n$. 
For each integer $i\ge -1$ we define 
\begin{equation*}
{\sep}_i(C)= \frac1{n+1}\sum_{W\subseteq \Vertices} \frac{\tilde{\beta_i}(C[W])}{\binom{n}{|W|}}.
\end{equation*}
\end{definition}

\begin{remark}
\label{rem:correct_beta}
In~\cite{Bha2016}, \cite{Bagchi14StellSpheresTightness} and \cite{Burton14SepIndex2Spheres},  $\tilde\beta_0$ is defined to be equal to $\beta_0 -1$. This coincides with our convention, see \Cref{ssec:notation}, except for the empty complex where their convention gives  $\tilde \beta_0 = -1$ and ours gives $\tilde \beta_0 = 0$. We believe our convention is more commonly used, see for instance \cite{Hatcher2002AlgTop}. It also behaves more nicely, e.g., regarding Alexander duality. All statements from the above papers that we cite in this article have been adapted to our convention.
\end{remark}

Hochster's~\Cref{eq:Hochster} gives the following interpretation of the $\sep$-vector:

\begin{proposition}
\label{prop:Hochster}
Let $C$ be a simplicial complex on a vertex set $V$ of size $n$ and let $\{r_{i,j}\}_{0\le i \le j \le n}$ be the graded Betti numbers of the Stanley-Reisner ideal $I_C$. Then
\[
\sep_i = 
\frac1{n+1}\sum_{j=i+1}^n \frac{r_{j-i-2,j}}{\binom{n}{j}}.
\]
\end{proposition}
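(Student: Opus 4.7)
The plan is to apply Hochster's formula \eqref{eq:Hochster} to the definition of $\sep_i$, after rewriting the defining sum by grouping subsets $W \subset V$ according to their cardinality.

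First I would rewrite the definition
\[
\sep_i(C) = \frac{1}{n+1}\sum_{W \subset V} \frac{\tilde{\beta}_i(C[W])}{\binom{n}{|W|}}
\]
by pulling the binomial coefficient out of the inner sum, which only depends on $|W|$:
\[
\sep_i(C) = \frac{1}{n+1}\sum_{j=0}^n \frac{1}{\binom{n}{j}}\sum_{\substack{W \subset V\\ |W|=j}} \tilde{\beta}_i(C[W]).
\]
Next I would match the inner sum to Hochster's formula. Setting $k := j-i-2$, the right-hand side of \eqref{eq:Hochster} reads
\[
r_{j-i-2,j}(I_C) = \sum_{\substack{W \subset V\\ |W|=j}} \tilde{\beta}_i(C[W]),
\]
which gives the desired expression after substituting and observing the truncation of the summation range.

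The only point that needs a brief comment is why the summation starts at $j = i+1$ rather than $j=0$. There are two equivalent reasons, and I would mention whichever is cleanest: combinatorially, $C[W]$ has no $i$-faces when $|W| < i+1$, so $\tilde{\beta}_i(C[W]) = 0$ for $j \le i$ (using the convention that $\tilde{\beta}_{-1}$ of the empty complex is $0$, as noted in \Cref{rem:correct_beta}); algebraically, the graded Betti numbers $r_{k,j}$ are only defined for $k \ge -1$, and $k = j-i-2 \ge -1$ forces $j \ge i+1$. Either way, the $j \le i$ terms contribute zero and can be dropped, giving
\[
\sep_i(C) = \frac{1}{n+1}\sum_{j=i+1}^n \frac{r_{j-i-2,j}}{\binom{n}{j}}
\]
as claimed.

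There is no real obstacle in this proof; it is a direct manipulation once the indexing is lined up correctly. The only place one can slip is the index shift $k = j-i-2$ in Hochster's formula, which I would double-check by verifying a small case (e.g.\ $i=-1$, where $\sep_{-1}$ should capture the $r_{j-1,j}$ counting minimal non-faces, up to the degree-zero term $r_{-1,0}=1$ which contributes $\tilde\beta_{-1}$ of the empty complex).
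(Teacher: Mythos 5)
Your proof is correct and is essentially the paper's argument: the paper states this proposition as an immediate consequence of Hochster's formula (\Cref{eq:Hochster}), obtained exactly as you do by grouping the defining sum over $W$ by cardinality $j=|W|$ and substituting $r_{j-i-2,j}$ for the inner sum, with the terms $j\le i$ vanishing because $C[W]$ then has no $i$-faces. Your index bookkeeping and the justification of the truncated summation range are both accurate.
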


The value $\sep_i$ is essentially the same as $\sigma_i/(n+1)$, except for the fact that we do not need to assume all elements of $V$ to be used as vertices in $C$. That is, we may have $f_0(C) < |V|$. As the primary example, observe that the link $\lk_C(v)$ of a vertex $v$ of the simplicial complex $C$ can be considered as a complex having as ground set the set of all vertices of $C$ or having only those vertices joined to $v$. The value of the $\sigma$-vector as defined in \cite{Bagchi14StellSpheresTightness} depends on the choice of ground set, but the $\sep$-vector does not.

\begin{theorem}[\protect{\cite[Lemma 4.4]{Novik17NormalizedSigma}}]
\label{thm:ground_set}
Let $C$ be a simplicial complex on a ground set $\Vertices$ but assume that it only uses as vertices a subset $\Vertices_0\subsetneq \Vertices$. Then, $\sep_i(C)$ is independent of whether we compute it using $\Vertices$ or $\Vertices_0$ as a ground set.
\end{theorem}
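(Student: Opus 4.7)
The plan is to exploit the fact that, since $C$ uses only vertices in $V_0$, for any $W \subset V$ we have $C[W] = C[W \cap V_0]$, and hence $\tilde\beta_i(C[W]) = \tilde\beta_i(C[W \cap V_0])$. Writing $n = |V|$, $n_0 = |V_0|$, and $m = n - n_0$, I would group the sum defining $\sep_i(C)$ (computed on $V$) by the value of $W_0 := W \cap V_0$, so that the theorem reduces to the purely combinatorial identity
\[
\frac{1}{n+1} \sum_{\substack{W \subset V \\ W \cap V_0 = W_0}} \frac{1}{\binom{n}{|W|}} = \frac{1}{(n_0+1)\binom{n_0}{|W_0|}}
\]
for each fixed $W_0 \subset V_0$. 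Summing this against $\tilde\beta_i(C[W_0])$ over $W_0 \subset V_0$ then gives exactly $\sep_i(C)$ computed on $V_0$.

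To prove the identity I would use the beta-integral representation
\[
\frac{1}{(n+1)\binom{n}{k}} = \int_0^1 p^k (1-p)^{n-k}\, dp,
\]
which also furnishes the probabilistic interpretation foreshadowed by \Cref{rem:probability}: the weights appearing in the $\sep$-vector are precisely the probability masses of the random subset of $V$ obtained by first drawing $p \in [0,1]$ uniformly and then including each element of $V$ independently with probability $p$. Under this two-step distribution, the marginal law of $W \cap V_0$ is the analogous distribution on $V_0$, which is exactly the content of the identity above and already makes the theorem intuitively obvious.

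Concretely, indexing the $W$'s with fixed intersection $W_0$ by $k = |W \setminus V_0| \in \{0,\dots,m\}$ and applying the beta representation termwise gives
\[
\frac{1}{n+1} \sum_{k=0}^{m} \binom{m}{k} \frac{1}{\binom{n}{|W_0|+k}}
= \int_0^1 p^{|W_0|}(1-p)^{n_0-|W_0|} \sum_{k=0}^{m} \binom{m}{k} p^k (1-p)^{m-k}\, dp.
\]
The inner sum collapses to $1$ by the binomial theorem, and what remains is a beta integral equal to $\frac{1}{(n_0+1)\binom{n_0}{|W_0|}}$, completing the proof.

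There is no serious obstacle here: once one recognises the exchangeable-measure viewpoint the argument is essentially three lines. The only step requiring a moment's care is the initial reduction $\tilde\beta_i(C[W]) = \tilde\beta_i(C[W\cap V_0])$, which depends crucially on the hypothesis that the elements of $V \setminus V_0$ are not vertices of $C$, so that they contribute no faces to any induced subcomplex. This is also the feature that fails for the original $\sigma$-vector of \cite{Bagchi14StellSpheresTightness} and pinpoints why the normalisation $1/(n+1)$ is the ``correct'' one.
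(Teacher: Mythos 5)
Your proof is correct, but it takes a genuinely different route from the paper's. The paper removes the extra ground-set elements one at a time: assuming $|V\setminus V_0|=1$, it pairs each $W_0\subset V_0$ with $W_0\cup\{v\}$ and verifies by a short direct binomial manipulation that the two weights combine into the single weight $\frac{1}{(|V_0|+1)\binom{|V_0|}{|W_0|}}$, with the general case following by induction. You instead handle all of $V\setminus V_0$ at once by reducing to the marginalization identity
\[
\frac{1}{n+1} \sum_{\substack{W \subset V \\ W \cap V_0 = W_0}} \frac{1}{\binom{n}{|W|}} = \frac{1}{(n_0+1)\binom{n_0}{|W_0|}},
\]
and proving it via the beta-integral representation $\frac{1}{(n+1)\binom{n}{k}} = \int_0^1 p^k(1-p)^{n-k}\,dp$, after which the binomial theorem collapses the inner sum. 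Both your reduction $\tilde\beta_i(C[W])=\tilde\beta_i(C[W\cap V_0])$ and the integral computation check out. What your approach buys is a conceptual explanation: the weights $P_V$ are a uniform mixture of i.i.d.\ Bernoulli$(p)$ measures (the de Finetti representation of the P\'olya urn alluded to in \Cref{rem:probability}), and such mixtures trivially restrict to the analogous measure on any subset of coordinates; this makes the exchangeability-and-consistency phenomenon, which the paper only states informally after the proof, into the actual proof mechanism. The paper's argument is more elementary (no integrals) and shorter to verify line by line, but arguably less illuminating about \emph{why} the normalization $1/(n+1)$ is the right one.
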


Another motivation for preferring $\sep$ over $\sigma$ is that it admits a probabilistic interpretation. 
Indeed, the coefficients
\[
P_\Vertices(W):= \frac{1}{(|\Vertices|+1) \binom{|\Vertices|}{|W|}}
\]
add up to 1 when we sum over all subsets $W$ of $\Vertices$, hence they are a probability distribution in $2^\Vertices$. Our $\sep_i(C)$ is nothing but the expected reduced Betti number $\tilde\beta_i$ over all induced subcomplexes of $C$,  with respect to this probability distribution. 

In this interpretation $P_\Vertices$ can be thought as the joint distribution of $|\Vertices|$ binary random variables  (the indicators of the subsets $W\subseteq \Vertices$). Then,~\Cref{thm:ground_set} follows from (and is in fact equivalent to) the fact that for every subset $\Vertices_0\subseteq \Vertices$ of variables we have that $P_{\Vertices_0}$ is the restriction of $P_\Vertices$ to that subset. 

\begin{remark}
\label{rem:probability}
The probability distribution $P_\Vertices$ on $2^{|\Vertices|}$  is equivalent to the \emph{P\'olya urn model with one initial ball of each color} \cite{Mahmoud2008PolyaUrnModel}. Suppose that we have an urn, initially containing one ball of color $0$ and one ball of color $1$. We repeat the following procedure $n$ times: take a ball uniformly at random from the urn, then place it back in the urn together with an additional ball of the same color. It can easily be checked that the probability of obtaining a certain sequence $S\in \{0,1\}^n$ after $n$ trials 
equals $\frac{1}{(n+1) \binom{n}{k}}$ where $k$ is the number of $1$'s in $S$.

One may ask what other probability distributions over $2^{\Vertices}$ can be used to define invariants with similar properties as the $\sep$-vector. There are two nice properties of $P_{\Vertices}$ that are implicitly used throughout this paper, and which a viable alternative probability distribution should satisfy as well:

\begin{enumerate}
\item $P_{\Vertices}$ causes the $|\Vertices|$ individual binary variables to be \emph{exchangeable}. That is, the probability of a subset $W$ only depends on $|W|$ and not on the particular elements it contains. This property makes the $\sep$-vector manageable from the combinatorial point of view, and is also needed in order to have (a statement analogue to)~\Cref{prop:Hochster}.

\item $P_{\Vertices}$ is symmetric under \emph{complementation}. Equivalently, the individual probability of each element is $1/2$. We implicitly use this property often, especially in connection to Alexander duality in~\Cref{sec:DSeq}.
\end{enumerate}
\end{remark}

Assuming both exchangeability and symmetry under complementation, a very natural alternative choice of probability distribution is the uniform distribution, giving probability $(1/2)^{|\Vertices|}$ to every subset. Our reason to prefer $P_{\Vertices}$ is that, this way, the $\sep$-vector allows to determine whether a simplicial manifold is tight or not, see~\Cref{thm:bagchi-intro} and its applications in~\Cref{sec:conjecture}.

\section{$\sep_0$ of (chordal) graphs and strongly connected pure complexes}
\label{sec:chordal}

In this section, we look at the first non-trivial entry of the $\sep$-vector, the 0-dimensional one. By definition, $\sep_i(C)$ only depends on the $(i+1)$-skeleton of $C$, so in particular $\sep_0$ only depends on the $1$-skeleton and can be studied for arbitrary graphs. Computing $\sep_0$ exactly for a given graph is likely to be a computationally hard problem in general, since it is closely related to hard network reliability problems~\cite{Bodlaender04ANote}. Here we show that computing it for chordal graphs is straightforward, and discuss implications for the $\sep_0$ of arbitrary manifolds (or, more generally, for strongly connected pure complexes).
 
\subsection{$\sep_0$ via perfect elimination orderings and in-degree sequences}
\label{ssec:peo}

Let $G=(V,E)$ be a graph on $n=|V|$ vertices, and consider the vertices given in a particular order $v_1,\dots,v_n$. An \emph{in-degree sequence} of $G$ with respect to that ordering is a sequence $(\delta_1,\dots,\delta_n)$ where $\delta_i$ is the number of neighbors of $v_i$ among $\{v_1,\dots,v_{i-1}\}$. Equivalently, it is the in-degree sequence of the digraph obtained from $G$ by directing all edges from the smaller to the larger vertex (according to the given ordering). Let $G_i=G[v_1,\dots,v_i]$ be the subgraph induced by the first $i$ vertices.

An ordering of the vertices of $G$ is called a \emph{perfect elimination ordering} (or {\em p.e.o.}, for short) \cite{Dirac60RigidCircuitGraphs,Fulkerson65ChordalPEO} if for every $i=2,\dots,n$ we have that $G[\{v_j: j<i$ and $v_iv_j\in E\}]$ is a clique in $G$. Equivalently, if $G_{i}$ is obtained from $G_{i-1}$ by joining the new vertex $v_{i}$ to a clique. 

\begin{remark}
\label{rem:chordal}
Dirac's Theorem says that the existence of a p.e.o. in a graph $G$ is equivalent to $G$ being \emph{chordal}; that is, no cycle in $G$ of length greater then three is induced. 

Several generalizations of chordality to higher dimension have been proposed, the closest to our work being the recent homological one by Adiprasito, Nevo and Samper~\cite{ANS16HigherChordality}. More precisely, Adiprasito et al.~call a simplicial complex $C$ \emph{resolution $k$-chordal} if $\sep_k(C)=0$ and \emph{decomposition $k$-chordal} if the $k$-th homology of every induced subcomplex of $C$ is generated by $k$-cycles isomorphic to the boundary of a $(k+1)$-simplex. Among other results, they prove that all complexes with a certain $k$-Dirac property are decomposition $k$-chordal~\cite[Proposition 6.3]{ANS16HigherChordality} and explore the converse implication.
\end{remark}

\begin{lemma}
\label{lemma:peo}
Let $G$ be a graph with in-degree sequence $(0=\delta_1,\dots,\delta_n)$ and, as above, let $G_i$ denote the  subgraph induced by the first $i$ vertices. Then,
\begin{equation}
\label{eq:peo} 
\sep_0(G_i) - \sep_0(G_{i-1}) \le \frac{1}{(\delta_i+1)(\delta_i+2)} - \frac{1}{i(i+1)} , 
\end{equation}
with equality if and only if $G[\{v_j: j<i$ and $v_iv_j\in E\}]$ is a clique.
In particular, 
\[
\sep_0(G) \le \frac1{n+1} + \sum_{i=1}^n \frac{1}{(\delta_i+1)(\delta_i+2)} - 1,
\]
with equality if and only if the ordering $v_1,\dots,v_n$ is a perfect elimination ordering.
\end{lemma}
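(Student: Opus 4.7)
The plan is to prove the per-step inequality~\eqref{eq:peo} and then telescope over $i=1,\dots,n$. By \Cref{thm:ground_set} I may compute both $\sep_0(G_{i-1})$ and $\sep_0(G_i)$ with respect to the common ground set $V_i:=\{v_1,\dots,v_i\}$, so that
\[
\sep_0(G_i)-\sep_0(G_{i-1})=\frac{1}{i+1}\sum_{W\subset V_i}\frac{\tilde\beta_0(G_i[W])-\tilde\beta_0(G_{i-1}[W\setminus\{v_i\}])}{\binom{i}{|W|}}.
\]
Summands with $v_i\notin W$ vanish, so only subsets of the form $W=W'\cup\{v_i\}$ with $W'\subset V_{i-1}$ contribute.

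The combinatorial heart of the argument is to track how $v_i$ changes the component count. Let $N_i:=\{v_j:j<i,\ v_iv_j\in E\}$ (of size $\delta_i$) and write $k(W')$ for the number of connected components of $G_{i-1}[W']$ meeting $N_i$. A short case analysis shows that $\tilde\beta_0(G_i[W'\cup\{v_i\}])-\tilde\beta_0(G_{i-1}[W'])$ equals $0$ when $W'=\emptyset$, equals $+1$ when $W'\ne\emptyset$ and $W'\cap N_i=\emptyset$ (the new vertex contributes a fresh isolated component), and equals $1-k(W')\le 0$ when $W'\cap N_i\ne\emptyset$ (the new vertex merges $k(W')$ existing components into one). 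Consequently
\[
\sep_0(G_i)-\sep_0(G_{i-1})\le\frac{1}{i+1}\sum_{\emptyset\ne W'\subset V_{i-1}\setminus N_i}\frac{1}{\binom{i}{|W'|+1}},
\]
with equality if and only if $k(W')=1$ whenever $W'\cap N_i\ne\emptyset$. This last condition is in turn equivalent to $G[N_i]$ being a clique: one direction is immediate, and for the other, any pair of non-adjacent $u,w\in N_i$ yields $W'=\{u,w\}$ with $k(W')=2$.

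To evaluate the upper bound in closed form, set $m:=i-1-\delta_i=|V_{i-1}\setminus N_i|$ and use the beta-function identity $\binom{i}{s+1}^{-1}=(i+1)\int_0^1 x^{s+1}(1-x)^{i-s-1}\,dx$. Exchanging sum and integral and applying the binomial theorem gives
\[
\sum_{s=0}^{m}\frac{\binom{m}{s}}{\binom{i}{s+1}}=(i+1)\int_0^1 x(1-x)^{\delta_i}\,dx=\frac{i+1}{(\delta_i+1)(\delta_i+2)}.
\]
Subtracting the $s=0$ term $1/i$ and dividing by $i+1$ recovers exactly the right-hand side of~\eqref{eq:peo}.

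Finally, the empty graph has $\sep_0=0$, and $\sum_{i=1}^n\frac{1}{i(i+1)}=1-\frac{1}{n+1}$ telescopes, so summing~\eqref{eq:peo} for $i=1,\dots,n$ yields the desired global bound, with equality throughout if and only if $v_1,\dots,v_n$ is a perfect elimination ordering. The one thing to be careful about is the bookkeeping of the convention $\tilde\beta_0(\emptyset)=0$ and the borderline case $W'=\emptyset$; everything else then reduces to standard binomial identities.
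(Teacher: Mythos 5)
Your proof is correct and follows essentially the same route as the paper: reduce to the per-step inequality via \Cref{thm:ground_set}, split the subsets $W'$ according to whether they meet the back-neighborhood of $v_i$, and observe that the clique condition is exactly what makes the negative contributions vanish. The only (cosmetic) difference is that you evaluate the binomial sum with the Beta-function integral representation, whereas the paper invokes its Chu--Vandermonde identity (\Cref{lemma:binomial_quotients}); both give $\frac{1}{(\delta_i+1)(\delta_i+2)}-\frac{1}{i(i+1)}$.
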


\begin{proof}
The second part of the lemma easily follows from the first one by induction on $i$ (for the base case $i=1$ observe that $\delta_1=0$).

For the first part, denote $V_i=\{v_1,\dots,v_i\}$ for each $i$ and rewrite $\sep_0(G_{i})$ as
 \begin{align*}
{\sep}_0(G_i) &
=\frac1{i+1}\sum_{W\subseteq V_{i-1} } \frac{\tilde{\beta_0}(G_i[W])}{\binom{i}{|W|}} 
+\frac1{i+1}\sum_{W\subseteq V_{i-1} } \frac{\tilde{\beta_0}(G_i[W\cup\{i\}])}{\binom{i}{|W|+1}}. 
\\
 \end{align*}

By \Cref{thm:ground_set}, we can consider $G_{i-1}$ as a graph on the ground set $V_i$ and write
 \begin{align*}
{\sep}_0(G_{i-1}) &
=\frac1{i+1}\sum_{W\subseteq V_{i-1} } \frac{\tilde{\beta_0}(G_i[W])}{\binom{i}{|W|}} 
+\frac1{i+1}\sum_{W\subseteq V_{i-1} } \frac{\tilde{\beta_0}(G_i[W])}{\binom{i}{|W|+1}}. \\
 \end{align*}

The difference of these two expressions gives
 \begin{align*}
{\sep}_0(G_{i}) - {\sep}_0(G_{i-1}) &
=\frac1{i+1}\sum_{W\subseteq V_{i-1} } \frac{\tilde{\beta_0}(G_i[W\cup\{i\}]) - \tilde{\beta_0}(G_i[W])}{\binom{i}{|W|+1}}.
 \end{align*}

We thus need to look at the difference $\tilde{\beta_0}(G_i[W\cup\{i\}]) - \tilde{\beta_0}(G_i[W])$. 
For each $W\subseteq V_{i-1}$ we distinguish depending on whether $W$ contains some neighbor of $v_i$ or not:

\begin{itemize}
\item If $W$ contains no neighbor of $v_i$ then $\tilde{\beta_0}(G_i[W\cup\{i\}]) - \tilde{\beta_0}(G_i[W]) =1$ except for the case $W=\emptyset$, where it equals zero. The contribution of all such subsets $W$ to ${\sep}_0(G_{i}) - {\sep}_0(G_{i-1})$ can thus be computed exactly.
\begin{align*}
\label{eq:peo1}
\frac{1}{i+1} \sum_{k=1}^{i-1-\delta_i} \frac{ \binom{i-1-\delta_i}{k} } {\binom{i}{k+1}}
= \frac{1}{i+1} \sum_{k=0}^{i-1-\delta_i} \frac{ \binom{i-1-\delta_i}{k} } {\binom{i}{k+1}} - \frac1{i(i+1)} 
=\frac1{(\delta_i+1)(\delta_i+2)} - \frac1{i(i+1)}.
 \end{align*}
The last equality is a consequence of \Cref{lemma:binomial_quotients} below.

\item If $W$ contains neighbors of $v_i$ then $\tilde{\beta_0}(G_i[W\cup\{i\}]) - \tilde{\beta_0}(G_i[W])\le 0$, and equality holds for every  $W$ if and only if the neighbors form a clique. This finishes the proof.
\end{itemize}
\end{proof}

The following combinatorial identity used in the proof of \Cref{lemma:peo} appears several times in the paper.

\begin{lemma}
\label{lemma:binomial_quotients}
For any non-negative integers  $n$ and $a\le b$ one has
\[
\sum_{k=0}^{n}\frac{\binom{n}{k}}{\binom{n+b}{k+a}}
= \frac{n+b+1}{(b+1)\binom{b}a}.
\]
\end{lemma}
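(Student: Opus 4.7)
The plan is to reduce the identity to a standard form of the Chu--Vandermonde convolution. First I would expand factorials to rewrite each summand as
\[
\frac{\binom{n}{k}}{\binom{n+b}{k+a}} \;=\; \frac{1}{\binom{b}{a}\binom{n+b}{b}} \, \binom{k+a}{a}\binom{n-k+b-a}{b-a}.
\]
Both sides simplify to $n!\,(k+a)!\,(n+b-k-a)! \big/ \bigl(k!\,(n-k)!\,(n+b)!\bigr)$ after cancellation, so this is a mechanical factorial check valid for any $0\le a\le b$ and $0\le k\le n$.

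After pulling the constant factor out of the sum, the claim reduces to
\[
\sum_{k=0}^n \binom{k+a}{a}\binom{n-k+b-a}{b-a} \;=\; \binom{n+b+1}{b+1}.
\]
This is a classical convolution: extracting the coefficient of $x^n$ from
\[
\frac{1}{(1-x)^{a+1}} \cdot \frac{1}{(1-x)^{b-a+1}} \;=\; \frac{1}{(1-x)^{b+2}}
\]
yields exactly this identity, since $\binom{k+a}{a}$ and $\binom{n-k+b-a}{b-a}$ are the relevant coefficients in the two factors on the left.

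Combining the two steps, one obtains $\sum_{k=0}^n \binom{n}{k}\big/\binom{n+b}{k+a} = \binom{n+b+1}{b+1}\big/\bigl(\binom{b}{a}\binom{n+b}{b}\bigr)$, and the elementary simplification $\binom{n+b+1}{b+1}\big/\binom{n+b}{b} = (n+b+1)/(b+1)$ produces the claimed value $\frac{n+b+1}{(b+1)\binom{b}{a}}$.

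I do not expect a substantial obstacle, since both ingredients are standard; the only care needed is in the preliminary factorial manipulation and in recognizing the ``hockey-stick'' form of the convolution. An equally clean alternative is analytic: writing $1/\binom{n+b}{k+a} = (n+b+1)\int_0^1 x^{k+a}(1-x)^{n+b-k-a}\,dx$ and swapping sum and integral, the binomial theorem collapses $\sum_k \binom{n}{k}x^k(1-x)^{n-k}$ to $1$, leaving the single Beta integral $(n+b+1)\int_0^1 x^a(1-x)^{b-a}\,dx = (n+b+1)\,a!(b-a)!/(b+1)!$, which again equals the target.
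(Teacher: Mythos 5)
Your main argument is correct and follows essentially the same route as the paper: both rewrite the summand in factorials to reduce the claim to the convolution $\sum_{k=0}^n \binom{k+a}{a}\binom{n-k+b-a}{b-a}=\binom{n+b+1}{b+1}$, the paper justifying it by counting $(b+1)$-subsets of $[n+b+1]$ according to their $(a+1)$-th element while you extract the coefficient of $x^n$ from $(1-x)^{-(a+1)}(1-x)^{-(b-a+1)}=(1-x)^{-(b+2)}$ --- an immaterial difference. Your closing Beta-integral argument, using $1/\binom{n+b}{k+a}=(n+b+1)\int_0^1 x^{k+a}(1-x)^{n+b-k-a}\,dx$ and the binomial theorem, is a genuinely different and equally valid one-line alternative not in the paper.
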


\begin{proof}
\begin{align*}
\label{eq:binomial_quotients}
\binom{b}{a}\sum_{k=0}^{n}\frac{\binom{n}{k}}{\binom{n+b}{k+a}}
   &= \frac{b!}{a!(b-a)!}\sum_{k=0}^{n} \frac{n!(n+b-k-a)!(k+a)!}{(n+b)!(n-k)!k!} \\
   &= \frac{n!b!}{(n+b)!}  \sum_{k=0}^{n} \frac{(n+b-k-a)!(k+a)!}{(n-k)!k!a!(b-a)!} \\
   &= \frac{n!b!}{(n+b)!}  \sum_{k=0}^{n} \binom{n+b-k-a}{b-a} \binom{k+a}a \\
   &= \frac{n!b!}{(n+b)!}  \binom{n+b+1}{b+1}
   = \frac{n+b+1}{b+1}.
\end{align*}

The second to last equality 
\[
 \sum_{k=0}^{n} \binom{n+b-k-a}{b-a} \binom{k+a}a = 
 \binom{n+b+1}{b+1}
\]
is a form of the Chu-Vandermonde identity, and follows from the fact that the left-hand side enumerates subsets of $[n+b+1]$ of size $b+1$: each summand counts the possibilities for the $(a+1)$-th element in the subset to be  $k+a+1$.
\end{proof}

\subsection{Billera-Lee spheres and balls}
\label{ssec:BLSpheres}

 The Billera-Lee spheres are simplicial polytopal $(d-1)$-spheres realizing all possible $f$-vectors allowed by McMullen's conditions, see \cite{Billera81ProofSuffMcMullenCondFVec}. They are constructed as the boundaries of certain $d$-balls that we call \emph{Billera-Lee balls}. We here introduce this construction, motivated by the fact (proven in \Cref{ssec:upperbound}) that Billera-Lee $d$-balls maximize $\sep_0$ among all strongly connected $d$-complexes with a fixed number of vertices and edges.

Consider the cyclic $(d+1)$-polytope with $n$ vertices, $C_{d+1}(n)$. In its standard embedding along the moment curve, we define its \emph{lower facets} to be the facets whose exterior normal has a negative last coordinate. By Gale's evenness criterion (see, for instance, \cite[Corollary 6.1.9]{DeLoera10Triangulations}), the lower facets of $C_{d+1}(n)$ are

\begin{itemize}
  \item $(i_1, i_1+1, \ldots , i_k, i_k + 1)$, for $1 \le i_1 < \dots < i_k \le n-1$ non-consecutive, if $d+1=2k$ is even; and
  \item $(1, i_1, i_1+1, \ldots , i_k, i_k + 1)$, for $2 \le i_1 < \dots < i_k \le n-1$ non-consecutive, if $d+1=2k+1$ is odd.
\end{itemize}

\begin{figure}[hbt]
\setlength{\arraycolsep}{2.5pt}
\[
\begin{array}{cccccccl}
1,2,3,4,5 \\
\downarrow & \searrow \\
1,2,3,5,\uu6 &\to& 1,3,\uu4,5,\uu6 \\
\downarrow && \downarrow & \searrow \\
1,2,3,6,\uu7 &\to& 1,3,\uu4,6,\uu7 &\to& 1,4,\uu5,6,\uu7 \\
\downarrow && \downarrow && \downarrow & \searrow \\
1,2,3,7,\uu8 &\to& 1,3,\uu4,7,\uu8 &\to& 1,4,\uu5,7,\uu8 &\to& 1,5,\uu6,7,\uu8 \\
\vdots && \vdots &\ddots& \vdots &\ddots& \vdots & \ddots \\
1,2,3,n-1,\uu{n} &\to& 1,3,\uu4,n-1,\uu{n} &\to& 1,\dots &\to& 1,\dots & 1,n-3,\uu{n-2},n-1,\uu{n} \\
\end{array}
\]
  \caption{Directed dual graph, or shelling order, of the $4$-ball that is the lower envelope of the cyclic $5$-polytope with $n$ vertices. \label{fig:lowerEnv}}
\end{figure}

As a complex, these lower facets form a $d$-ball that we denote $L_{d}(n)$. See \Cref{fig:lowerEnv} for the list of facets of $L_{4}(n)$. As shown in the figure, we consider the adjacency graph of $L_{d}(n)$ with its edges directed towards the lexicographically larger facet incident to the ridge they represent. This orientation is obviously acyclic. It corresponds to a shelling of $L_{d}(n)$ in the sense that any linear ordering compatible with this orientation is a shelling order. 
Since the in-degree of each facet in this directed graph equals the number of ridges it has in common with the complex it is glued to in the shelling process, the $h$-vector of $L_{d}(n)$ has as $h_i$ the number of facets of in-degree $i$. An easy calculation shows that
\[
h_i = \binom{n-d-2+i}{i}, \qquad \forall i = 0,\dots, \lceil d/2\rceil,
\]
and $h_i=0$ for $i = \lceil d/2\rceil + 1, \dots, d+1$.

Let $B$ be any ideal in the partial order. That is, let $B$ be a subset of facets of $L_{d}(n)$ such that $F\in B$ and $F'\to F$ is a directed edge implies $F'\in B$. Then $B$ is an initial segment of a shelling of $L_{d}(n)$ and, in particular, $B$ is a shellable $d$-ball whose $h_i$ equals the number of facets $F$ in $B$ that have in-degree equal to $i$.
The {\em Billera-Lee balls} are given by some particular ideals of this type.

\begin{theorem}[Billera and Lee~\cite{Billera81ProofSuffMcMullenCondFVec}]
Let $k=(k_0,\dots, k_{\lfloor d/2\rfloor})$ be any vector satisfying McMullen's conditions for the $g$-vector of a $d$-polytope and let $n\ge k_0+d+1$.
Let $B^d_k$ be the subset of facets of $L_{d}(n)$ given by 
\[
B^d_k:= \bigcup_{i=0}^{\lfloor d/2\rfloor} \{ \text{first $k_i$ facets of $L_{d}(n)$ with in-degree $i$, in reverse lexicographical order} \}.
\]
Then, $B^d_k$ is an ideal in the partial order of facets of $L_{d}(n)$. In particular, it is a shellable $d$-ball with $h$-vector equal to $k$. Moreover, $\partial B^d_k$ is polytopal.
\end{theorem}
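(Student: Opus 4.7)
The plan is to verify the three claims in turn: (i) $B^d_k$ is an ideal in the partial order, (ii) it is then automatically a shellable ball with $h$-vector $k$, and (iii) its boundary admits a polytopal realization.

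First I would parametrize the in-degree $i$ facets of $L_d(n)$ explicitly. Using Gale's evenness criterion together with the directed shelling pictured in \Cref{fig:lowerEnv}, one checks that in-degree $i$ corresponds to a facet that shares $i$ ridges with lexicographically smaller facets, and an elementary bijection identifies such facets with monomials of degree $i$ in $\lfloor d/2 \rfloor$ variables. Moreover, under this identification the reverse-lexicographic order on facets pulls back to the reverse-lex order on monomials. This step is purely combinatorial and amounts to reading off the structure of the directed graph of \Cref{fig:lowerEnv}.

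Next comes the main combinatorial content: describing the predecessors of a facet in terms of its monomial. Predecessors of the same in-degree $i$ always precede $F$ in reverse-lex order, so their inclusion in $B^d_k$ is immediate. The remaining predecessors have in-degree $i-1$ and correspond precisely to the Macaulay ``shadow'' of the monomial associated to $F$. Hence closure of $B^d_k$ under predecessors is equivalent to the statement that, for each $i$, the shadow of the first $k_i$ monomials of degree $i$ (in reverse-lex order) is contained in the first $k_{i-1}$ monomials of degree $i-1$. This is exactly Macaulay's theorem on Hilbert functions, applied under the hypothesis that $k$ satisfies the $g$-conditions (equivalently, $k_{i+1} \le k_i^{\langle i \rangle}$). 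Thus $B^d_k$ is an ideal; shellability is then automatic since any linear extension of the ideal is an initial segment of a shelling of $L_d(n)$, and the $h$-vector is $k$ by the in-degree count described before the theorem.

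The polytopality of $\partial B^d_k$ is the genuinely hard part and is Billera and Lee's main technical achievement. I would follow their strategy: realize $B^d_k$ as the lower envelope of a carefully perturbed configuration of points on the moment curve, where the perturbations are engineered from the numbers $k_i$ so that exactly the prescribed initial segment of the shelling survives as the set of lower facets. Verifying that such a perturbation exists requires a delicate analysis of which liftings of the cyclic polytope's vertices produce which subcomplexes of $L_d(n)$ as lower envelopes. The combinatorial steps (i)--(ii) reduce cleanly to Macaulay's theorem, but this geometric realization is the step where I expect the main obstacle; in practice one would simply cite \cite{Billera81ProofSuffMcMullenCondFVec} for it.
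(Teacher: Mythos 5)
The paper does not prove this statement at all: it is imported verbatim from Billera and Lee, with \cite{Billera81ProofSuffMcMullenCondFVec} as the only justification, so there is no internal argument to compare yours against. Your sketch is nevertheless a faithful outline of the original proof, and the division of labor you propose is the right one: the ideal property is the genuinely combinatorial claim, it reduces to showing that the in-degree-$(i-1)$ predecessors of a facet form the Macaulay shadow of the corresponding monomial, and the $M$-sequence condition $k_{i+1}\le k_i^{\langle i\rangle}$ is exactly what makes the reverse-lex initial segments closed under taking shadows; shellability and the $h$-vector count then come for free from the discussion preceding the theorem, and polytopality of $\partial B^d_k$ is the hard geometric step that one cites. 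One small correction to your second paragraph: since the number of in-degree-$i$ facets of $L_d(n)$ is $h_i=\binom{n-d-2+i}{i}$, the facets of in-degree $i$ biject with monomials of degree $i$ in $n-d-1$ variables, not in $\lfloor d/2\rfloor$ variables; the quantity $\lfloor d/2\rfloor$ (or $\lceil d/2\rceil$) bounds the admissible degree, i.e.\ the range of in-degrees, rather than the number of variables. With that fixed, your reduction to Macaulay's theorem is correct, and your decision to defer the lifting/perturbation argument for polytopality to \cite{Billera81ProofSuffMcMullenCondFVec} matches exactly what the paper itself does.
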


We call the ball $B^d_k$ constructed in the theorem the \emph{Billera-Lee ball}  with $h$-vector equal to the given $k$, and its boundary $S^{d-1}_k : = \partial B^d_k$ the \emph{Billera-Lee sphere} with $g$-vector 
\[
(k_0,\dots, k_{\lfloor d/2\rfloor-1}, k_{\lfloor d/2\rfloor}, -k_{\lceil d/2\rceil}, - k_{\lceil d/2\rceil-1}, \dots, -1).
\]
Here $k_{\lceil d/2\rceil}$ is considered to be zero when $d$ is odd. That this is indeed the $g$-vector of $S^{d-1}_k$ follows from the fact that the $h$-vector of a ball and the $g$-vector of its boundary are related by $g(\partial B) = h(B) - \overline h(B)$, where $\overline h(B)$ denotes the $h$-vector written in reverse. 
(See \cite[Theorem 2.6.11]{DeLoera10Triangulations} or \cite[Chapter 17, Theorem 7.3.6]{HandbookDCG}.)

\begin{example}
  For $d=4$, assume that we want to construct a $3$-sphere with prescribed $g$-vector $(g_1,g_2)$. That is, in the above notation we want to construct $S^3_{(1,g_1,g_2)} = \partial B^4_{(1,g_1,g_2)}$. In \Cref{fig:lowerEnv}, first take facet $12345$. This is the only facet with in-degree $0$ and hence the only facet that contributes to $h_0$ of $B^4_{(1,g_1,g_2)}$. Then take the next $g_1$ facets of the first column (they contribute to $h_1$), and the first $g_2$ facets of the other columns in the order they are read ($13456, 13467, 14567, 13478, \ldots$, they contribute to $h_2$). By construction, the boundary of this ball is then a $3$-sphere with $g$-vector $(g_1,g_2)$. 
  
Observe that each row, considered as a sequence of flips in the boundary of the previously constructed ball $B^4_{(1,g_1,g_2)}$, connects the vertex $i$, inserted by a $(1,4)$-flip by the first facet in the row, to all other vertices of the sphere. In particular, the last row corresponds to a sequence of flips turning $C_{4}(n-1)$ into $C_{4}(n)$. In \Cref{fig:lowerEnv}, the (minimal) faces introduced by the respective flip are underlined.
\end{example}

For $d \ge 3$, number of vertices $f_0\ge d+1$ and number of edges $f_1\in  [d f_0 - \binom{d+1}2), \binom{f_0}{2}]$, the graph of the Billera-Lee $d$-ball with $f_0$ vertices and $f_1$ edges consists of {\em a)} a clique of size $k \in [d+1, f_0]$, {\em b)} if $k<f_0$, 
a vertex attached to $j \in [d, k-1]$ vertices from that clique, and {\em c)} $f_0-k-1$ additional vertices each attached to $d$ vertices forming a $d$-clique in the previous list. The parameters $j$ and $k$ can be deduced from $f_0$, $f_1$ and $d$ in the following way: $k$ is the largest integer such that $\binom{k}2 + (f_0-k)d \le f_1$, and $j= f_1 - \binom{k}2 - (f_0-k-1)d$.

\begin{definition}
We call a graph obtained in this way a {\em Billera-Lee graph} with parameters $(f_0,f_1,d)$, and denote it by $G(f_0,f_1,d)$. 
\end{definition}

Note that, for dimension $d\ge 3$, we have that $f_1(B^d_k)  = f_1(S^d_k)$ as a direct consequence of the construction of $B^d_k$. Hence, we can observe that the graph of the Billera-Lee $(d-1)$-sphere with $f_0$ vertices and $f_1$ edges is isomorphic to some Billera-Lee graph with parameters $(f_0,f_1,d)$.

Since the given ordering of vertices is a p.e.o., we can deduce $\sep_0$ of a Billera-Lee graph from \Cref{lemma:peo}.

\begin{corollary}
With the above notation, the $\sep_0$-value of a Billera-Lee graph $G$ with parameters $(f_0,f_1,d)$ equals
\[
\sep_0(G) = \frac1{f_0+1} - \frac1{k+1}  + \frac1{(j+1)(j+2)} + \frac{f_0-k-1}{(d+1)(d+2)}.
\]

\end{corollary}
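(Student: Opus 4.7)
The plan is to apply \Cref{lemma:peo} directly to the vertex ordering that comes with the Billera-Lee graph, so that the inequality there becomes an equality.

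First, I would check that the ordering of vertices implicit in the construction, namely (a) the $k$ vertices of the initial clique in some order, followed by (b) the vertex of in-degree $j$, followed by (c) the remaining $f_0-k-1$ vertices each attached to a $d$-clique among the earlier ones, is a perfect elimination ordering. This is immediate from the definition: at each step, the new vertex is joined to a set of earlier vertices that is explicitly a clique (respectively the whole initial clique, a $j$-subset of that clique, or a $d$-clique among the previous vertices). Hence \Cref{lemma:peo} holds with equality for this ordering.

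Next, I would record the in-degree sequence along this ordering: $\delta_i=i-1$ for $i=1,\dots,k$, then $\delta_{k+1}=j$, and finally $\delta_i=d$ for $i=k+2,\dots,f_0$. Plugging into \Cref{lemma:peo} yields
\[
\sep_0(G)=\frac{1}{f_0+1}-1+\sum_{i=1}^{k}\frac{1}{i(i+1)}+\frac{1}{(j+1)(j+2)}+\frac{f_0-k-1}{(d+1)(d+2)}.
\]

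Finally, I would evaluate the telescoping sum $\sum_{i=1}^{k}\frac{1}{i(i+1)}=\sum_{i=1}^{k}\left(\frac{1}{i}-\frac{1}{i+1}\right)=1-\frac{1}{k+1}$. Substituting this back cancels the $-1$ and produces exactly the claimed formula
\[
\sep_0(G) = \frac{1}{f_0+1}-\frac{1}{k+1}+\frac{1}{(j+1)(j+2)}+\frac{f_0-k-1}{(d+1)(d+2)}.
\]
There is really no obstacle here; the only thing to be careful about is the bookkeeping between the indices and the telescoping, and the observation that the ordering is a p.e.o., which is built into the Billera-Lee construction.
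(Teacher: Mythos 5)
Your proposal is correct and is exactly the argument the paper intends: the paper states the corollary immediately after observing that the given ordering of a Billera--Lee graph is a perfect elimination ordering, so that \Cref{lemma:peo} applies with equality, and your explicit bookkeeping of the in-degree sequence $(0,1,\dots,k-1,j,d,\dots,d)$ together with the telescoping sum $\sum_{i=1}^{k}\frac{1}{i(i+1)}=1-\frac{1}{k+1}$ is precisely the omitted computation.
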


\subsection{In-degree sequences of pure complexes and an upper bound for $\sep_0$}
\label{ssec:upperbound}

We call an in-degree-sequence \emph{$d$-dimensional} if it is of the form $(0,1,\dots,d, k_1,\dots,k_{n-d-1})$ with $k_i \geq d$ for all $1 \leq i \leq n-d-1$. 
Note that every $d$-dimensional in-degree sequence is also $(d-1)$-dimensional. The $d$ only indicates a lower bound for the entries in the sequence.

\begin{proposition}
  \label{prop:indegree}
  Every pure $d$-complex with connected adjacency graph (that is, every {\em strongly connected} pure simplicial complex) has an ordering of its vertices with a $d$-dimensional in-degree sequence.
\end{proposition}

\begin{proof}
 In order to construct a $d$-dimensional in-degree sequence, build up the complex step by step adding one vertex at the time and forming the respective induced subcomplex. Start with the vertices of a facet to obtain the first $d+1$ entries of the sequence. By the connectedness assumption, in any further induced subcomplex we must have some facet $F$ for which some adjacent facet $F'$ is still not in the subcomplex. Since the subcomplex is induced, the unique vertex of $F'\setminus F$ is not in it either. Since $F'$ and $F$ are adjacent, this vertex must connect to at least $d$ vertices of the current subcomplex. Add this vertex to construct the next induced subcomplex.
\end{proof}

\begin{lemma}
  \label{lem:billeralee}
  Let $C$ be a graph with a $d$-dimensional in-degree sequence  and let $G$ be the Billera-Lee graph with parameters $(f_0(C),f_1(C),d)$. Then 
  $\sep_0 (C) \leq \sep_0 (G)$.
\end{lemma}

\begin{proof}
  Let $\delta = (0,1,2,3,\dots,d, k_1,\dots,k_{n-d-1})$ be a $d$-dimensional in-degree sequence realizing $C$. Such a sequence exists due to \Cref{prop:indegree}. Then $\sep_0(C)$ satisfies the upper bound specified in \Cref{lemma:peo}. The proof is completed by the observation that this upper bound 
increases when $\delta$ is modified into a $d$-dimensional in-degree sequence of Billera-Lee type keeping the sum of degrees constant.
\end{proof}

\begin{corollary}
  \label{cor:billeralee}
  Let $C$ be a strongly connected $d$-complex, and let $G$ be the Billera-Lee graph with parameters $(f_0(C),f_1(C),d)$. Then 
  $\sep_0 (C) \leq \sep_0 (G)$.
\end{corollary}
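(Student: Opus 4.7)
The plan is to combine \Cref{prop:indegree} with \Cref{lemma:peo} and then reduce $\sep_0(C) \leq \sep_0(G)$ to a comparison between the two in-degree sequences that appear. First, by \Cref{prop:indegree} I select an ordering $v_1, \ldots, v_n$ of the vertices of $C$ whose in-degree sequence $\delta = (0, 1, \ldots, d, k_1, \ldots, k_{n-d-1})$ satisfies $k_i \geq d$ for all $i$ and $\sum_i \delta_i = f_1$. Applying \Cref{lemma:peo} to this ordering yields
\[
\sep_0(C) \leq \frac{1}{n+1} + \sum_{i=1}^n f(\delta_i) - 1,
\]
where $f(x) := 1/((x+1)(x+2))$. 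On the other hand, $G$ is chordal and its natural ordering is a perfect elimination ordering with in-degree sequence $\mu = (0, 1, \ldots, k-1, j, d, \ldots, d)$. The equality case of \Cref{lemma:peo} therefore gives $\sep_0(G) = \frac{1}{n+1} + \sum_i f(\mu_i) - 1$, matching the closed-form expression just above the corollary.

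Since $\delta$ and $\mu$ agree on their first $d+1$ entries and have the same total sum, the inequality $\sep_0(C) \leq \sep_0(G)$ reduces to the tail comparison
\[
\sum_{i=d+2}^n f(\delta_i) \leq \sum_{i=d+2}^n f(\mu_i).
\]
Because $f$ is convex and strictly decreasing, $\sum f$ is Schur-convex on sequences with a fixed sum, so this inequality follows once the tail of $\delta$ is shown to be majorized by the tail of $\mu$; equivalently, the multiset of tail in-degrees of $C$ must be ``no more spread out'' than BL's tail $\{d+1, d+2, \ldots, k-1, j, d, \ldots, d\}$.

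This tail comparison is the main obstacle. Using only the weak constraints $k_i \geq d$, $k_i \leq i+d$, and $\sum k_i = f_1 - \binom{d+1}{2}$, one can construct valid $d$-dimensional sequences, such as tails of the form $(d, d, \ldots, d, k_{n-d-1})$ with all the excess concentrated in a single late position, that are \emph{more} spread out than $\mu$ and hence fail the required majorization. So the flexibility in \Cref{prop:indegree} must be used: at each greedy step one may add any vertex adjacent to the current pure subcomplex, and by giving priority to vertices of high degree in $C$ early in the ordering one prevents disproportionately large in-degrees from appearing at the end. Making this precise will most plausibly proceed either by an exchange argument that swaps two consecutive vertices in an ordering while monotonically improving the majorization, or by induction on $n - d - 1$, in each step stripping off a carefully chosen last vertex and reducing to a strongly connected pure $d$-complex with one fewer vertex.
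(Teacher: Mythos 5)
Your setup coincides exactly with the paper's: invoke \Cref{prop:indegree} to obtain a $d$-dimensional in-degree sequence for $C$, apply \Cref{lemma:peo} to bound $\sep_0(C)$ by $\frac1{n+1}+\sum_i \frac{1}{(\delta_i+1)(\delta_i+2)}-1$, note that equality holds for the Billera-Lee graph because its natural ordering is a perfect elimination ordering, and reduce everything to comparing $\sum_i f(\delta_i)$ with $\sum_i f(\mu_i)$ for $f(x)=1/((x+1)(x+2))$. Up to that point you have reproduced the paper's argument. The gap is that you stop there: the tail comparison is announced as ``the main obstacle'' and you only sketch two possible strategies (an exchange argument, or an induction stripping off a last vertex) without carrying either one out. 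As written, the proposal does not prove the corollary.

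That said, the obstacle you isolate is genuine, and it is exactly where the paper itself is thin: the paper's entire justification of the last step is the sentence that the bound of \Cref{lemma:peo} ``increases when $\delta$ is modified into a $d$-dimensional in-degree sequence of Billera-Lee type keeping the sum of degrees constant.'' Taken literally, as a statement about arbitrary $d$-dimensional sequences with the correct sum, this is false for the reason you give: $f$ is convex, so $\sum_i f$ is Schur-convex, and a tail of the form $(d,\dots,d,K)$ with all the excess in one late entry majorizes the Billera-Lee tail $(d+1,\dots,k-1,j,d,\dots,d)$ and hence yields a strictly larger value of $\sum_i f$. For instance, with $d=3$, $n=8$, $f_1=22$, the admissible tail $(3,3,3,7)$ gives $3/20+1/72$, which exceeds the Billera-Lee tail $(4,5,4,3)$ value $2/30+1/42+1/20$. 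So a complete proof must use more than the constraints recorded in \Cref{prop:indegree}: one has to show that the ordering of \Cref{prop:indegree} can be \emph{chosen} so that its in-degree sequence is majorized by the Billera-Lee one (e.g.\ by always inserting an admissible vertex of maximal back-degree and verifying the partial-sum inequalities), or else rule out that the offending sequences arise from strongly connected complexes, or bypass \Cref{lemma:peo} for orderings that are far from perfect elimination orderings. You have correctly located the missing step, but neither your sketch nor the paper's one-line observation supplies it; completing your exchange/induction plan is the actual content of the proof.
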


Combining all these observations we have the following application of \Cref{cor:billeralee}.

\begin{theorem}
  \label{thm:upperbound}
  Given $(f_0,f_1)$ and $d\ge 3$, the maximum $\sep_0$ among all strongly connected $d$-complexes with $f_0$ vertices and $f_1$ edges lies between the $\sep_0$ of the Billera-Lee graph with parameters $(f_0,f_1,d+1)$ (realized by a Billera-Lee $d$-sphere) and that of the Billera-Lee graph with parameters $(f_0,f_1,d)$. The quotient between these two values is smaller than $(d+3)/(d+1)$.
\end{theorem}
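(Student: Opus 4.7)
The theorem decomposes into three claims, which I would prove as follows.

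The upper bound is immediate from \Cref{cor:billeralee}: for every strongly connected $d$-complex $C$ with $f_0$ vertices and $f_1$ edges, $\sep_0(C) \le \sep_0(G_d)$, where $G_D$ denotes the Billera-Lee graph with parameters $(f_0,f_1,D)$. Taking the maximum over $C$ preserves the inequality.

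For the lower bound, I would exhibit the Billera-Lee $d$-sphere with the prescribed $(f_0,f_1)$, whenever the corresponding $g$-vector is a valid $M$-sequence (so that this sphere exists). Being a closed triangulated manifold, it is strongly connected and thus lies in the class over which the maximum is taken. By the paragraph immediately preceding the theorem, its $1$-skeleton is precisely $G_{d+1}$, so its $\sep_0$ equals $\sep_0(G_{d+1})$ and provides the desired lower bound.

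The main task is the ratio estimate. Using the explicit formula from the preceding corollary,
\[
\sep_0(G_D) = \frac{1}{f_0+1} - \frac{1}{k_D+1} + \frac{1}{(j_D+1)(j_D+2)} + \frac{f_0-k_D-1}{(D+1)(D+2)},
\]
the dominant term is $\frac{f_0-k_D-1}{(D+1)(D+2)}$. Since the constraint $\binom{k}{2}+(f_0-k)D \le f_1$ defining $k_D$ strictly tightens as $D$ grows (for $k < f_0$), one has $k_d \ge k_{d+1}$, so $f_0-k_d-1 \le f_0-k_{d+1}-1$ and the ratio of dominant terms is at most $\frac{d+3}{d+1}$.

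The main obstacle is promoting this weak bound to strict inequality. I would show $\sep_0(G_d) - \frac{d+3}{d+1}\sep_0(G_{d+1}) < 0$ directly. The shared $\frac{1}{f_0+1}$ contributes the favourable summand $-\frac{2}{(d+1)(f_0+1)}$ to this difference, while the clique-correction terms $-\frac{1}{k_D+1}+\frac{1}{(j_D+1)(j_D+2)}$, of magnitude $O(1/(d+2))$, must be tracked precisely. The delicate case is $k_d = k_{d+1}$, where the dominant-term ratio already equals $\frac{d+3}{d+1}$ and strictness must come from these lower-order corrections; a case analysis splitting on whether $j_{d+1}$ stays above $d$ or the minimality of $k_{d+1}$ is forced upward handles this. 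The degenerate case $k_d = f_0$, where $G_D = K_{f_0}$ and both $\sep_0$ values vanish, is treated separately.
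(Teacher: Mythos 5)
Your treatment of the upper and lower bounds coincides with the paper's: the upper bound is read off from \Cref{cor:billeralee}, and the lower bound comes from exhibiting the Billera-Lee $d$-sphere, whose graph is the Billera-Lee graph with parameters $(f_0,f_1,d+1)$. (Your caveat about existence of that sphere is a point the paper glosses over; for the purposes of $\sep_0$ only $(g_1,g_2)$ matter, so the requirement is $0\le g_2\le\binom{g_1+1}{2}$, i.e.\ $(d+1)f_0-\binom{d+2}{2}\le f_1\le\binom{f_0}{2}$.) In fact, the paper's entire proof consists of these two sentences and never addresses the quotient claim at all, so you are attempting strictly more than the authors wrote down.

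The gap is in that third part. Your first observation is sound: for fixed $k<f_0$ the quantity $\binom{k}{2}+(f_0-k)D$ increases with $D$, so $k_d\ge k_{d+1}$ and the ratio of the terms $\frac{f_0-k_D-1}{(D+1)(D+2)}$ is at most $\frac{d+3}{d+1}$. But the strategy of treating the remaining summands as ``lower-order corrections'' to be tracked afterwards does not survive contact with the actual numbers: those summands are of the same magnitude as, and frequently larger than, the term you call dominant. For example, take $d=3$, $f_0=10$, $f_1=30$ (a stacked $3$-sphere). Then $k_4=5$, $j_4=4$, $\sep_0(G_4)=\frac{1}{11}\approx 0.091$, while the ``dominant'' term alone is $\frac{4}{30}\approx 0.133$; and $k_3=7$, $j_3=3$, $\sep_0(G_3)=\frac{1}{11}-\frac18+\frac{1}{20}+\frac{2}{20}\approx 0.116$. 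The ratio of dominant terms is $\frac{3\cdot 6}{2\cdot 4}=0.75$, yet the true quotient is about $1.27$: the corrections move the ratio by more than $50\%$. So the inequality you actually need, $\sep_0(G_d)<\frac{d+3}{d+1}\sep_0(G_{d+1})$, cannot be obtained by perturbing the dominant-term comparison; it has to be proved by a direct analysis of the full four-term expressions, including the interaction between $k_D$ and $j_D$ as $D$ changes. The case analysis you invoke (``splitting on whether $j_{d+1}$ stays above $d$ \dots handles this'') is exactly the content that is missing, and it is not obviously routine. As it stands the quotient claim remains unproved in your proposal --- though, again, it is equally unproved in the paper.
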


\begin{proof}
  The lower bound follows from the existence of the Billera-Lee $d$-spheres (which have as graph the Billera-Lee graph with parameter $(f_0,f_1,d+1)$). The upper bound is implied by \Cref{cor:billeralee}. 
\end{proof}

With the help of the results in this section we can also give an elementary proof of the following special case of \Cref{conj:general-intro}.

\begin{proposition}
  \label{thm:nearlyneighb}
  Let $G$ be an $n$-vertex graph with at most $n-d-2$ missing edges (that is, 
  $f_1 \geq {n \choose 2} - n + d + 2$). Then, $G$ admits a $(d+1)$-dimensional in-degree sequence. In particular,
   \[ \sep_0 (G) \leq \sep_0(\sphere), \]
     where $\sphere$ is an $n$-vertex Billera-Lee $d$-sphere with $f_1$ edges.
\end{proposition}

\begin{proof}
  The second part of the statement follows from the first part by \Cref{lem:billeralee}.
  
  To prove the first part, let us first show that $G$ contains a clique of size $d+2$. For this, let $K$ be any maximal clique in $G$. Maximality means $G$ has at least $n-|K|$ missing edges, so that $n-|K| \le n-d-2$ and $|K|\ge d+2$.

We now construct the vertex-sequence starting with a clique of size $d+2$ and then, for every  $\ell \in d+2,\dots,n-1$, we choose as $\ell+1$-th vertex any one with maximum number of neighbors among the already chosen ones.
We claim that this number of neighbors is always at least $d+1$. 

If it were not,
each of the $n-\ell$ choices of the next vertex has at most $d$ edges connecting it to the first $\ell$. This implies 
 at least $(\ell-d )(n-\ell)$ edges missing from $G$. 
  Since $(\ell-d)(n-\ell) > n-(d+2)$ for all $d+1 \leq \ell \leq n-1$, we have a contradiction.
\end{proof}

Observe that the first part of the statement is not true if $n-d-1$ edges are missing. For example, the graph of a $(d+1)$-cross-polytope has no in-degree sequence starting with $(0,1,2,\dots,d+1,d+1)$.

\section{The $\sep$-vector under manifold operations}

This section is dedicated to a detailed analysis of how the $\sep$-vector behaves under certain standard operations on simplicial manifolds.

\subsection{Bistellar flips}
\label{ssec:bistflips}

\emph{Bistellar flips}, \emph{bistellar moves}, or just \emph{flips} are local modifications that change one triangulation of a manifold into another without changing the PL-topological type of the manifold. 

They are sometimes called \emph{Pachner moves} due to the following result of Pachner \cite{Pachner87KonstrMethKombHomeo, Pachner91PLHomeo}:  two triangulated manifolds $C_1$ and $C_2$ are PL-homeomorphic if and only if there is a sequence of flips transforming one in the other.

For the precise definition we need the following setup: let $F$ be a ground set of size $d+2$, and for each $ \emptyset \ne F_1\subsetneq F$, let $B_{F,F_1}$ be defined as the simplicial complex with vertex set $F$ and unique minimal non-face $F_1$. More explicitly, $B_{F,F_1}$ is the pure $d$-complex with $|F_1|$ facets, given by the subsets $\{F \setminus \{i\} : i \in F_1\}$.

It is a fact that $B_{F,F_1}$ is always a simplicial ball and that its boundary equals the boundary of $B_{F, F_2}$, where $ F_2 = F \setminus F_1$. It is also obvious that the isomorphism type of $B_{F,F_i}$ only depends on $|F|$ and $|F_i|$.

\begin{definition}[Bistellar flip]
Let $\manifold_1$ be a triangulated $d$-manifold on a ground set $V$ and let $F\subseteq V$ be of size $d+2$. Suppose that $\manifold_1[F]= B_{F,F_1}$ for some $\emptyset \ne F_1\subsetneq F$ and let $F_2=F\setminus F_1$. Assume $F_2$ is not a face in $\manifold_1$. 
Then, the \emph{bistellar flip} of $F_1$ (or of $F$) in $\manifold_1$ is the triangulated manifold $\manifold_2$ obtained by removing the subcomplex $B_{F,F_1}$ from $\manifold_1$ and inserting $B_{F,F_2}$ in its place. 
We say that the flip is \emph{of type} $(|F_1|, |F_2|)$ and we call $F_1$ and $ F_2$ the \emph{face removed} and \emph{face inserted} by the flip, respectively. (More precisely, the flip inserts/removes all faces containing $F_1$ and $F_2$, respectively; $F_1$ and $F_2$ are the unique minimal inserted/removed faces).
\end{definition}

A flip of type $(i,j)$, $i+j=d+2$, replaces $i$ facets in a triangulation by $j$ of them. Flips of types $(i,j)$ and $(j,i)$ are inverse operations to one another. 
More precisely, for all choices of $F$ and $F_1$, the simplicial complex $B_{F,F_1} \cup B_{F, F_2}$ is isomorphic to the boundary of the $(d+1)$-simplex.

Flips of type $(1,d+1)$ are also called \emph{stellar subdivisions at a facet} or \emph{stacking operations}. Spheres obtained from the boundary of a simplex by stacking operations are called \emph{stacked} (see \Cref{ssec:stacked}). Note that, when performing flips of type $(1,d+1)$, a new element of the ground set that is not a vertex of $\manifold_1$ has to be added to the set of vertices of $\manifold_2$. When talking about $\sep$ this is not an issue, since it is independent of the ground set in use, see \Cref{thm:ground_set}.

\begin{remark}
\label{rem:flip_facevectors}
It is straightforward to describe how a bistellar flip changes the $f$-, $h$-, and $g$-vector of a $d$-manifold. For the latter, this takes the following very simple form.
If $\manifold_2$ is obtained from $\manifold_1$ by a flip of type $(i,j)$  then
\[
g_i(\manifold_2) = g_i(\manifold_1) +1, \quad
g_j(\manifold_2) = g_j(\manifold_1) -1,  \quad
g_k(\manifold_2) = g_k(\manifold_1) \text{ for all $k\not\in \{i,j\}$}.
\]
(Assuming $i\ne j$. Flips with $i=j$ do not change the $f$-, $h$-, or $g$-vector).
\end{remark}

The homotopy type of most induced subcomplexes are not affected by a bistellar flip, which allows us to give a qualitative statement on how the $\sep$-vector of the manifold changes under the operation.

\begin{lemma}[Lemma 2.3, parts 1 and 2a in \cite{Bha2016}]
\label{lemma:flip_subcomplexes}
Let $\manifold_1$ and $\manifold_2$ be $d$-manifolds, $\manifold_2$ obtained from $\manifold_1$ by a flip with removed face $F_1$ and inserted face $F_2$. 
Let $F=F_1\cup F_2$, $i=|F_1|$ and $j=|F_2|$. In particular, the flip is of type $(i,j)$.
Let  $W\subseteq V$ be a subset of the ground set. 
\begin{itemize}
\item If $W \cap F \notin\{F_1,F_2\}$ then $\manifold_1[W] \simeq \manifold_2[W]$.
\item If $W \cap F = F_1$ then $\tilde \beta_k(\manifold_2[W]) = \tilde \beta_k(\manifold_1[W]), \quad \forall k\not\in \{i-1,i-2,j-1,j-2\}$.
If, moreover,  $i\ne j$ then 
\[
\begin{array}{rclrcl}
\tilde \beta_{i-2}(\manifold_2[W])  &\le& \tilde \beta_{i-2}(\manifold_1[W]), &\qquad
\tilde \beta_{j-2}(\manifold_2[W])  &\ge& \tilde \beta_{j-2}(\manifold_1[W]), \\
\tilde \beta_{i-1}(\manifold_2[W])  &\ge& \tilde \beta_{i-1}(\manifold_1[W]), &
\tilde \beta_{j-1}(\manifold_2[W])  &\le& \tilde \beta_{j-1}(\manifold_1[W]). \\
\end{array}
\]
\item If $W \cap F = F_2$ then the same happens, with inequalities in the opposite direction.
\end{itemize}
\end{lemma}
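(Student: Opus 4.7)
The plan is a case analysis on $W \cap F$, exploiting that the flip modifies $\manifold_1$ only within $2^F$: it removes the faces $\{F_2 \cup G : G \subsetneq F_1\}$ and inserts the faces $\{F_1 \cup G : G \subsetneq F_2\}$. Consequently, $\manifold_1[W]$ and $\manifold_2[W]$ can only differ in faces contained in $W \cap F$.

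For Case 1, further sub-divide according to whether $F_1$ and $F_2$ are contained in $W$. If neither is, no modified face lies in $W$ and $\manifold_1[W] = \manifold_2[W]$. If exactly one is (say $F_1 \subseteq W$ and $W_2 := W \cap F_2 \subsetneq F_2$, the other case being symmetric), then $\manifold_2[W]$ is obtained from $\manifold_1[W]$ by adjoining the faces $\{F_1 \cup G : G \subseteq W_2\}$. These, together with their sub-faces, form the full simplex on $F_1 \cup W_2$, whose intersection with $\manifold_1[W]$ is the ball $B_{F_1 \cup W_2, F_1}$; since both are contractible, a standard deformation-retract argument gives $\manifold_2[W] \simeq \manifold_1[W]$. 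If $F \subseteq W$, let $A$ be the common subcomplex of $\manifold_1[W]$ and $\manifold_2[W]$ consisting of faces avoiding both $F_1$ and $F_2$; then each $\manifold_i[W] = A \cup B_{F, F_i}$ with $A \cap B_{F, F_i} = \partial B_{F, F_1} = \partial B_{F, F_2}$, so both arise from $A$ by gluing a $d$-ball along a common $(d-1)$-sphere, and are therefore PL-homeomorphic.

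For Case 2 ($W \cap F = F_1$), a direct enumeration shows that the only modified face lying in $W$ is $F_1$ itself: $\manifold_2[W]$ and $\manifold_1[W]$ differ by the single face $F_1$, of dimension $i-1$, whose full boundary sphere $\partial F_1 \cong \sphere^{i-2}$ is already contained in $\manifold_1[W]$ (all proper subsets of $F_1$ lie in $B_{F, F_1} = \manifold_1[F]$). This is a standard $(i-1)$-cell attachment: from the long exact sequence of the pair (or the cellular chain complex) one reads off $\tilde\beta_k(\manifold_2[W]) = \tilde\beta_k(\manifold_1[W])$ for $k \notin \{i-1, i-2\}$, along with the dichotomy that either $\tilde\beta_{i-1}$ changes by $\pm 1$ or $\tilde\beta_{i-2}$ does, depending on whether $[\partial F_1]$ is null-homologous in $\tilde H_{i-2}(\manifold_1[W])$. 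The stated inequalities at indices $i-1, i-2$ follow at once; those at $j-1, j-2$ are trivially equalities unless the index pairs $\{i-1, i-2\}$ and $\{j-1, j-2\}$ overlap (i.e.\ $|i-j| \le 1$), in which case they reduce to the previous ones. Case 3 is handled by the symmetry that swaps $(F_1, i)$ and $(F_2, j)$, which accounts for the reversal of the inequality directions.

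The main technical subtlety is the gluing argument in the $F \subseteq W$ sub-case of Case 1, where one must carefully identify the shared subcomplex $A$ and verify the decompositions $\manifold_i[W] = A \cup B_{F, F_i}$ with the correct common intersection. Everything else reduces to a direct comparison of faces or a routine homological analysis of a cell attachment.
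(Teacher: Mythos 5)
Your overall strategy is the paper's: the flip changes $\manifold_1$ only in faces contained in $F$, so one analyses $W\cap F$ case by case and treats the second and third bullets as the attachment or deletion of a single cell. Your first two bullets are correct and in fact more careful than the paper's own argument. For the sub-case $F\subseteq W$ the paper claims that $\manifold_1[F]=B_{F,F_1}$ and $\manifold_2[F]=B_{F,F_2}$ both deformation retract onto their common intersection; that intersection is $\partial B_{F,F_1}\cong \sphere^{d-1}$, and a $d$-ball does not deformation retract onto its boundary sphere, so the paper's one-line justification fails there. Your decomposition $\manifold_k[W]=A\cup B_{F,F_k}$ with $A\cap B_{F,F_k}=\partial B_{F,F_1}=\partial B_{F,F_2}$ repairs this. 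Your identification of the single modified face in the second bullet and the cell-attachment dichotomy are also right. (Note that throughout you take $F_1$ to be the face \emph{inserted} by the flip, which contradicts the lemma's hypothesis as worded but agrees with the definition of $B_{F,F_1}$ and is the only reading under which the displayed inequalities of the second bullet are true.)

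The gap is in the third bullet. The symmetry you invoke swaps $\manifold_1\leftrightarrow\manifold_2$ \emph{and} $i\leftrightarrow j$ simultaneously, and these two reversals cancel: applying your second bullet to the reverse flip at a $W$ with $W\cap F=F_2$ yields $\tilde\beta_{j-1}(\manifold_2[W])\le\tilde\beta_{j-1}(\manifold_1[W])$ and $\tilde\beta_{j-2}(\manifold_2[W])\ge\tilde\beta_{j-2}(\manifold_1[W])$, i.e.\ the \emph{same} four inequalities as in the second bullet (only the pair of indices at which strictness can occur moves from $\{i-2,i-1\}$ to $\{j-2,j-1\}$), not their reversal. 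This is also what your own face count gives directly: when $W\cap F=F_2$ the only change is the deletion of the single $(j-1)$-cell $F_2$, whose boundary survives, so $\tilde\beta_{j-1}$ can only drop and $\tilde\beta_{j-2}$ can only rise. The concrete case $W=F_2$, where $\manifold_1[F_2]$ is a full simplex and $\manifold_2[F_2]=\partial\simplex_{F_2}$, shows $\tilde\beta_{j-2}$ strictly increasing, so the reversed inequality $\tilde\beta_{j-2}(\manifold_2[W])\le\tilde\beta_{j-2}(\manifold_1[W])$ is actually false. In other words, the third bullet as literally stated cannot be proved because it is not true; your symmetry argument, carried out carefully, exposes this rather than confirming it. The paper's own proof (``the case $F=F_2$ is similar, since it is the reverse flip'') glosses over the same point, and the downstream \Cref{coro:flip_sepind} in fact relies on the same-direction version. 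You should correct the statement, or at least flag the discrepancy, rather than assert that the symmetry ``accounts for the reversal''.
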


\begin{proof}
For the first part, just observe that if $W \cap F \notin\{F_1,F_2\}$ then $\manifold_1[W\cap F]$ and $\manifold_2[W\cap F]$ deformation retract to $\manifold_1[W\cap F]\cap \manifold_2[W\cap F]$. 

For the second part, assume $F=F_1$ (the case $F=F_2$ is similar, since it is the reverse flip). 
Removing the $(i-1)$-face $F_1$ can only decrease $\tilde \beta_{i-1}$ and increase $\tilde \beta_{i-2}$, and inserting the $(j-1)$-face $F_2$ can only increase $\tilde \beta_{j-1}$ and decrease $\tilde \beta_{j-2}$.
\end{proof}

\begin{corollary}[Lemma 2.3, parts 1 and 2a in \cite{Bha2016}]
\label{coro:flip_sepind}
Let $\manifold_1$ and $\manifold_2$ be two $d$-manifolds, with $\manifold_2$ obtained from $\manifold_1$ by an $(i,j)$-flip. Then $\sep_k(\manifold_2) = \sep_k(\manifold_1), \, \forall k\not\in \{i-2,i-1,j-2,j-1\}$.
If, moreover,  $i\ne j$ then
\[
\begin{array}{rclrcl}
\sep_{i-2}(\manifold_2)  &<& \sep_{i-2}(\manifold_1), &\qquad
\sep_{j-2}(\manifold_2)  &>& \sep_{j-2}(\manifold_1), \\
\sep_{i-1}(\manifold_2)  &>& \sep_{i-1}(\manifold_1), &
\sep_{j-1}(\manifold_2)  &<& \sep_{j-1}(\manifold_1). \\
\end{array}
\]
\end{corollary}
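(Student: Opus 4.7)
The approach is to apply \Cref{lemma:flip_subcomplexes} term by term to the defining sum for $\sep_k$. First, by \Cref{thm:ground_set}, we may place $\manifold_1$ and $\manifold_2$ on the common ground set $V = V(\manifold_1)\cup V(\manifold_2)$ of size $n$, and rewrite the difference as
\[
\sep_k(\manifold_2)-\sep_k(\manifold_1) = \frac{1}{n+1}\sum_{W\subseteq V}\frac{\tilde\beta_k(\manifold_2[W])-\tilde\beta_k(\manifold_1[W])}{\binom{n}{|W|}}.
\]
The first bullet of the lemma forces each summand to vanish whenever $W\cap F\notin\{F_1,F_2\}$, and its remaining bullets further restrict nonzero contributions to $k\in\{i-2,i-1,j-2,j-1\}$. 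This immediately yields $\sep_k(\manifold_2) = \sep_k(\manifold_1)$ for $k$ outside those four values.

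Still assuming $i\ne j$, I would next sort the nonzero summands by which of $F_1,F_2$ equals $W\cap F$. A $W$ with $W\cap F=F_1$ records the insertion of the single $(i-1)$-face $F_1$ into $\manifold_1[W]$, which by the standard effect of adding one cell either raises $\tilde\beta_{i-1}$ by one or lowers $\tilde\beta_{i-2}$ by one. A $W$ with $W\cap F=F_2$ records the removal of the $(j-1)$-face $F_2$, which either lowers $\tilde\beta_{j-1}$ by one or raises $\tilde\beta_{j-2}$ by one. For each of the four indices $k$, all nonzero contributions point in the same direction, yielding the weak versions of the four inequalities of the corollary.

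To upgrade them to strict inequalities, it suffices to exhibit, for each index, one $W$ with a nonzero contribution. For $\sep_{i-2}$ take $W=F_1$: then $\manifold_1[F_1]=\partial F_1\cong \sphere^{i-2}$ has $\tilde\beta_{i-2}=1$ while $\manifold_2[F_1]$ is the full $(i-1)$-simplex with $\tilde\beta_{i-2}=0$, giving a strictly negative summand, and symmetrically $W=F_2$ works for $\sep_{j-2}$. The main obstacle will be $\sep_{i-1}$ and $\sep_{j-1}$, for which the naive choices $W=F_1$ and $W=F_2$ leave the relevant Betti number unchanged. My plan is to take $W=F_1\cup\{u\}$ (respectively $W=F_2\cup\{u\}$) for a vertex $u\in V\setminus F$---such a $u$ always exists since $|V|\ge d+3$ in every situation where the corollary applies---and argue that for an appropriate choice of $u$ (for example one that is not joined by an edge to all of $F_1$ in $\manifold_2$), the inserted face $F_1$ represents a genuinely new $(i-1)$-cycle in $\manifold_2[W]$, providing the needed strict contribution.
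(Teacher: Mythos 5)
Your reduction to \Cref{lemma:flip_subcomplexes}, the resulting equalities and weak inequalities, and the certificates $W=F_1$, $W=F_2$ for the strict change of $\sep_{i-2}$ and $\sep_{j-2}$ all coincide with the paper's (very brief) proof, which simply says that strictness follows from considering $W=F_1$ and $W=F_2$. You are also right that these two subsets say nothing about $\sep_{i-1}$ and $\sep_{j-1}$: for $W=F_1$ one has $\manifold_1[F_1]=\partial\overline{F_1}\cong\sphere^{i-2}$ and $\manifold_2[F_1]=\overline{F_1}$, so only $\tilde\beta_{i-2}$ moves. Noticing that an extra certificate is needed here is a genuine improvement in care over the paper's own write-up.

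However, your proposed certificate is wrong as stated, in two ways. First, the condition on $u$ is inverted. Writing $L=\{G\subsetneq F_1:\ \{u\}\cup G\in\manifold_2\}$ (a subcomplex of $\partial\overline{F_1}$, since $F_1\cup\{u\}$ is never a face of $\manifold_2$ for $u\notin F$), one has $\manifold_2[F_1\cup\{u\}]=\overline{F_1}\cup(\{u\}*L)$, and Mayer--Vietoris gives $\tilde\beta_{i-1}(\manifold_2[F_1\cup\{u\}])=\tilde\beta_{i-2}(L)$, which is nonzero only when $L$ is \emph{all} of $\partial\overline{F_1}$, i.e.\ only when $u$ \emph{is} joined to every proper face of $F_1$; in that case $\manifold_2[F_1\cup\{u\}]$ is the boundary of an $i$-simplex while $\manifold_1[F_1\cup\{u\}]$ is that boundary minus the facet $F_1$, hence contractible, and the summand contributes $+1$. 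If $u$ misses even one face of $\partial\overline{F_1}$ (your proposed choice), both induced complexes have $\tilde\beta_{i-1}=0$ and the summand vanishes, so your $W$ certifies nothing. (For $i=1$ the two conditions happen to coincide vacuously, which may be the source of the confusion.) Second, even with the corrected condition, a vertex $u\notin F$ joined to all of $\partial\overline{F_1}$ need not exist once $i\ge 3$, so existence would still have to be argued. A certificate that always works for spheres is $W=V\setminus F_2$: by Alexander duality, $\tilde\beta_{i-1}(\sphere_k[V\setminus F_2])=\tilde\beta_{d-i}(\sphere_k[F_2])=\tilde\beta_{j-2}(\sphere_k[F_2])$, which equals $0$ for $\manifold_1$ (where $\manifold_1[F_2]=\overline{F_2}$) and $1$ for $\manifold_2$ (where $\manifold_2[F_2]=\partial\overline{F_2}$); the symmetric choice $W=V\setminus F_1$ handles $\sep_{j-1}$. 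For manifolds that are not spheres one needs instead to argue directly that the fundamental cycle of $\lk_{\manifold_1}(F_2)=\partial\overline{F_1}$ bounds in the deletion $\manifold_1[V\setminus F_2]$, a point on which the paper's proof is also silent.
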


\begin{proof}
All statements, except for the strictness of the inequalities, follow directly from \Cref{lemma:flip_subcomplexes}. Strictness follows from considering the cases $W=F_1$ and $W=F_2$.
\end{proof}

\subsection{Connected sum}
\label{ssec:connsum}

Building simplicial connected sums or, conversely, decomposing a  manifold into its connected summands, is sometimes applied as a step to organise proofs and/or to provide an argument with additional combinatorial structure. For instance, decomposing a triangulated $2$-sphere along all of its induced $3$-cycles yields a collection of flag $2$-spheres (plus possibly some boundaries of the tetrahedron). More generally, the $1$-skeleton of a $d$-manifold, which is always at least $(d+1)$-connected, is at least $(d+2)$-connected if and only if it is not a simplicial connected sum.

\begin{definition}[Simplicial connected sum]
  Given two triangulated $d$-manifolds $\manifold_1$ and $\manifold_2$, their {\em simplicial connected sum}, written $\manifold_1 \# \manifold_2$ is obtained by the following procedure. Remove a facet from each $\manifold_1$ and $\manifold_2$, and glue the resulting boundaries (both isomorphic to $\partial \Delta_{d}$).
\end{definition}

The combinatorics of $C_1\# C_2$ depends on the choice of $F_1$ and $F_2$,  but its topology only depends on (the parity of) the bijection between $F_1$ and $F_2$ (in case both $C_1$ and $C_2$ are chiral, i.e., orientable and do not admit orientation-reversing automorphisms). Our next result implies that the $\sep$-vector does not depend on this choice either.

\begin{theorem}
\label{thm:connectedsum}
Let $\manifold_1, \manifold_2$ be $d$-manifolds for $d\ge 2$. Let $n_i$ be the number of vertices of $\manifold_i$. Then
\begin{align*}
 {\sep}_i(\manifold_1 \# \manifold_2) &=  \sep_i(\manifold_1) + \sep_i(\manifold_2) +  
 c(d,n_1,n_2),  & i \in \{0,d-1\},
 \\
 {\sep}_i(\manifold_1 \# \manifold_2) &= \sep_i(\manifold_1) + \sep_i(\manifold_2), & i\in \{1,\dots,d-2\}.
\end{align*}
Here, 
\begin{align*}
 c(d,n_1,n_2)
&:= \frac{1}{d+2} - \frac{1}{n_1 +1}- \frac{1}{n_2 +1} +\frac{1}{n_1+n_2-d-1}\\
&= \frac{1}{d+2} - \frac{1}{n_1 +1}- \frac{1}{n_2 +1} +\frac{1}{n+1},
\end{align*}
where  $n = n_1+n_2 - d- 1$ denotes the number of vertices of $\manifold_1 \# \manifold_2$.
\end{theorem}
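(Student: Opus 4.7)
The plan is to express
\[
\Delta_i := \sep_i(\manifold_1\#\manifold_2) - \sep_i(\manifold_1) - \sep_i(\manifold_2)
\]
as one combined sum indexed by subsets of the joint ground set $V := V(\manifold_1) \cup V(\manifold_2)$. By \Cref{thm:ground_set}, each $\sep_i(\manifold_j)$ can be computed on the enlarged ground set $V$ without changing its value. Setting $V_j := V(\manifold_j)$, $F := V_1 \cap V_2$ (the gluing facet, of size $d+1$), $W_j := W \cap V_j$, $W_F := W \cap F$, and $\manifold := \manifold_1 \# \manifold_2$, one obtains
\[
\Delta_i \;=\; \frac{1}{n+1} \sum_{W \subset V} \frac{\tilde\beta_i(\manifold[W]) - \tilde\beta_i(\manifold_1[W_1]) - \tilde\beta_i(\manifold_2[W_2])}{\binom{n}{|W|}}.
\]
The remaining task is to evaluate each numerator via Mayer--Vietoris and to collect the resulting terms by applying \Cref{lemma:binomial_quotients}.

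To analyze the numerators, write $\manifold[W] = \widetilde \manifold_1[W_1] \cup \widetilde \manifold_2[W_2]$, where $\widetilde \manifold_j$ denotes $\manifold_j$ with its copy of $F$ removed; the intersection of the two pieces in $\manifold[W]$ equals $(\partial F)[W_F]$, a subcomplex of $\partial F \cong \sphere^{d-1}$. Reduced Mayer--Vietoris then splits the analysis into three regimes by $W_F$: (i) if $\emptyset \subsetneq W_F \subsetneq F$, the intersection is a simplex (hence contractible), so Betti numbers are perfectly additive and the numerator vanishes in every degree; (ii) if $W_F = \emptyset$, $\manifold[W]$ is a disjoint union, giving numerator $+1$ in degree $0$ exactly when both $W_1, W_2$ are non-empty, and $0$ in every other degree; (iii) if $W_F = F$, the intersection is the whole sphere $\partial F$, MV gives perfect additivity in degrees $0 \le i \le d-2$, and in degrees $d-1$ and $d$ one gets a $\pm 1$ discrepancy controlled by the connecting map $\tilde H_d(\manifold[W]) \to \tilde H_{d-1}(\partial F) = \F$.

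Combining the regimes, $\Delta_i = 0$ for $1 \le i \le d-2$ is immediate. For $i = 0$, only regime (ii) contributes; inclusion-exclusion on the condition ``both $W_j$ non-empty'' reduces the sum to three instances of $\sum_k \binom{m}{k}/\binom{n}{k}$ which, by \Cref{lemma:binomial_quotients}, combine to exactly $(n+1)\, c(d, n_1, n_2)$. For $i = d-1$, only regime (iii) contributes, and the key subclaim---obtained after converting between the Betti numbers of $\manifold_j[W_j]$ and $\widetilde \manifold_j[W_j]$ through the long exact sequence of removing the facet $F$---is that the degree-$(d-1)$ discrepancy equals $1$ precisely on those $W \supset F$ satisfying both $W_1 \subsetneq V_1$ and $W_2 \subsetneq V_2$. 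Intuitively, a $d$-cycle passing through $F$ in $\manifold_j[W_j]$ can exist only when $W_j = V_j$, because a closed orientable $d$-manifold carries essentially a single top-dimensional cycle (its fundamental class) and that cycle has full support. A second inclusion-exclusion, subtracting the contributions from $W \supset V_1$ and $W \supset V_2$ and adding back $W = V$, reduces this to three more applications of \Cref{lemma:binomial_quotients}, re-combining to $(n+1)\, c(d, n_1, n_2)$. I expect the main technical obstacle to be this regime-(iii) analysis in degree $d-1$, where the Mayer--Vietoris connecting map and the facet-removal LES must be tracked together so that the several $\pm 1$ rank changes align precisely with the indicator of ``$W_j \subsetneq V_j$ for both $j$.''
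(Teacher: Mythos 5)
Your proposal is correct and takes essentially the same approach as the paper: both put all complexes on the common ground set via \Cref{thm:ground_set}, run a Mayer--Vietoris case analysis on $W\cap F$, identify the $+1$ corrections in degrees $0$ and $d-1$ with the indicators ``$W\cap F=\emptyset$, both $W_j\neq\emptyset$'' and ``$F\subset W$, both $W_j\subsetneq V_j$'', and evaluate via inclusion--exclusion and \Cref{lemma:binomial_quotients}. The only cosmetic difference is that the paper dispatches the degree-$(d-1)$ sum by observing that $W$ contributes in degree $0$ exactly when $V\setminus W$ contributes in degree $d-1$ instead of redoing the inclusion--exclusion, and your justification of the degree-$(d-1)$ subclaim shares the paper's tacit assumption that $\partial F$ bounds in $\manifold_j$ minus a facet (orientability over $\F$, or $\operatorname{char}\F=2$).
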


\begin{proof}
Let $\manifold= \manifold_1 \# \manifold_2$. Denote by $V_k$ the set of vertices of $\manifold_k$, $k=1,2$, 
and let $D = V_1 \cap V_2$ be the common facet along which we take the connected sum.
Let $\tilde\manifold_k= \manifold_k \setminus \{D\}$, so that $\manifold_1 \# \manifold_2 = \tilde\manifold_1 \cup \tilde\manifold_2$.
By \Cref{thm:ground_set} we regard all these  complexes as having the same ground set $V=V_1 \cup V_2$. That is,
\begin{align*}
\sep_i(M)= & \sum_{W\subseteq V} \frac{\tilde \beta_i(M[W])}{(|V|+1) \binom{|V|}{|W|}}, \quad
\sep_i(M_k)=  \sum_{W\subseteq V} \frac{\tilde \beta_i(M_k[W])}{(|V|+1) \binom{|V|}{|W|}}.
\end{align*}

For $i\in \{1,\dots,d-2\}$ we have that
\begin{align}
\label{eq:connsumproof}
\beta_i(\manifold[W]) 
= \beta_i(\tilde \manifold_1[W]) + \beta_i(\tilde\manifold_2[W])
= \beta_i( \manifold_1[W]) + \beta_i(\manifold_2[W])
\end{align}
for every $W\subseteq V$. The first equality comes from Mayer-Vietoris, thanks to the fact that 
$\tilde \manifold_1[W] \cap \tilde\manifold_2[W]$ is either empty, contractible, or a $(d-1)$-sphere. (In particular, its $i$-th and $(i-1)$-th Betti numbers vanish). The second equality comes from the fact that removing a $d$-face only affects homology in dimensions $d$ and $d-1$. This implies 
$ {\sep}_i(\manifold_1 \# \manifold_2) = \sep_i(\manifold_1) + \sep_i(\manifold_2)$ for $ i\in \{1,\dots,d-2\}$.

For $\tilde \beta_0$ the second equality in~\Cref{eq:connsumproof} still holds but the first one needs a correction term whenever $W\cap D=\emptyset$ but both $W\cap V_k \ne\emptyset$.
More precisely, denoting $W_D=W\cap D$, $W_1=W\setminus V_2$ and $W_2=W\setminus V_1$, we have
\begin{align}
\label{eq:connsum0}
\tilde{\beta_0}(\manifold[W]) = \tilde{\beta_0}(\manifold_1[W]) + \tilde{\beta_0}(\manifold_2[W]) + \left\{\begin{aligned}
& 0 && \text{if }W_D\neq \emptyset, W_1 = \emptyset \textrm{ or } W_2  = \emptyset, \\
& 1 && \text{if } W_D=\emptyset \textrm{ and } W_1 \neq \emptyset \neq W_2.
\end{aligned}
\right.
\end{align}

For $i=d-1$ we have the opposite. If $D\not\subseteq W$ (that is, if $W_D \ne D$)~\Cref{eq:connsumproof} still holds, and the same happens if $V_1\subseteq W$ or $V_2\subseteq W$  (in this last case both inequalities in~\eqref{eq:connsumproof} may fail, but their failures cancel out  since the $(d-1)$-cycle $\partial D$ is trivial in all of $M_1[W]$, $M_2[W]$ and $M[W]$).
However, if $D\subseteq W$ but $W$ contains none of $V_1$ or $V_2$ then $\partial D$ adds one to $\beta_{d-1}(\manifold[W])$. That is,
\begin{align}
\label{eq:connsumd}
\beta_{d-1}(\manifold[W]) = \beta_{d-1}(\manifold_1[W]) + \beta_{d-1}(\manifold_2[W]) + \left\{\begin{aligned}
& 0 && \text{if }W_D\neq D, W_1 =V_1,  \textrm{ or } W_2  = V_2; \\
& 1 && \text{if } W_D=D \textrm{ and } W_1 \neq \emptyset \neq W_2.
\end{aligned}
\right.
\end{align}

Observe that a set $W$ contributes to the correction term in~\Cref{eq:connsum0} if and only if  its complement contributes in~\Cref{eq:connsumd}. Thus, the global correction is identical and, due to \Cref{eq:connsum0}, equal to

\begin{align*}
&\qquad \qquad\quad 
\sum_{\substack{\emptyset \neq W_1\subseteq V_1 \setminus D,\\ \emptyset \neq W_2\subseteq V_2 \setminus D}} \frac{1}{(n+1)\binom{n}{|W_1 \cup W_2|}} =
\\
= & \frac{1}{n+1} \sum_{i=1}^{n_1-d-1} \sum_{j=1}^{n_2-d-1} \frac{\binom{n_1 -d -1}{i}\binom{n_2-d-1}{j}}{\binom{n}{i+j}}  
\\
= & \frac{1}{n+1} \left( \sum_{i=0}^{n_1-d-1} \sum_{j=0}^{n_2-d-1} \frac{\binom{n_1 -d -1}{i}\binom{n_2-d-1}{j}}{\binom{n}{i+j}} - 
\sum_{j=0}^{n_2-d-1} \frac{\binom{n_2-d-1}{j}}{\binom{n}{j}} - \sum_{i=0}^{n_1-d-1} \frac{\binom{n_1-d-1}{i}}{\binom{n}{i}} +1 \right) 
\\
= & \frac{1}{n+1} \left( \sum_{k=0}^{n-d-1} \frac{\binom{n-d-1}{k}}{\binom{n}{k}} - \sum_{j=0}^{n_2-d-1} \frac{\binom{n_2-d-1}{j}}{\binom{n}{j}} - \sum_{i=0}^{n_1-d-1} \frac{\binom{n_1-d-1}{i}}{\binom{n}{i}} +1 \right) 
\\
 = & \frac{1}{d+2} - \frac{1}{n_1 +1}- \frac{1}{n_2 +1} +\frac{1}{n+1}.
\end{align*}

Here the last equality is implied by a special case of \Cref{lemma:binomial_quotients}, namely
\[
\sum_{k=0}^{n-t}\frac{\binom{n-t}{k}}{\binom{n}{k}}
= \frac{n+1}{t+1}.
\]
\end{proof}

Observe that, as a consequence of \Cref{thm:connectedsum}, we have that if $M_1$ and $M_2$ are triangulated $d$-manifolds with their $\sep$-vectors bounded by those of the Billera-Lee spheres with their respective $f$-vectors, then $M_1 \# M_2$ also has a $\sep$-vector bounded by that of the corresponding Billera-Lee sphere.

As another application of \Cref{thm:connectedsum}, we compute the change of the $\sep$-vector under arbitrarily many stacking operations (i.e. bistellar $(1,d+1)$-moves). Note that stacking is the same as performing a simplicial connected sum of a complex with the boundary of a $(d+1)$-simplex, which has $d+2$ vertices and whose $\sep$-vector vanishes except for $\sep_{-1}=\sep_d=1$. 
The graded Betti numbers of stacked spheres (spheres obtained from a simplex by stacking operations) were computed in \cite{TeraiHibi97}.

\begin{corollary}[see also Lemma 2.3, part 2b in \cite{Bha2016}]
 \label{cor:stacking}
 Let $\manifold$ be an $n$-vertex combinatorial $d$-manifold, $d\geq 2$. Then for every combinatorial $d$-manifold $\altmanifold$ obtained from $\manifold$ by $k$ bistellar $(1,d+1)$-moves (or stacking operations) we have
\begin{align}
 \label{eq:stacking}
 \sep_j (\altmanifold) &=  \sep_j (\manifold) + \frac{k}{(d+2)(d+3)} - \frac{1}{n+1} +\frac{1}{n+k+1}, & j \in \{ 0, d-1 \}, \\
 \sep_i (\altmanifold) &= \sep_i (\manifold) 
 , & i \in \{1,\dots,d-2 \}. \nonumber 
 \end{align}
\end{corollary}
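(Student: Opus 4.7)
The plan is to interpret each $(1,d+1)$-flip as a simplicial connected sum with $\partial\simplex^{d+1}$, and then iterate \Cref{thm:connectedsum}. A stacking at a facet $F$ of $\manifold$ amounts to identifying $F$ with a facet of $\partial\simplex^{d+1}$, removing the two identified facets, and gluing along the resulting boundaries, which is precisely $\manifold \# \partial\simplex^{d+1}$. Setting $N_0:=\manifold$ and $N_\ell:=N_{\ell-1}\# \partial\simplex^{d+1}$, we have $f_0(N_\ell)=n+\ell$ and $\altmanifold=N_k$.

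The key observation is that the $\sep$-vector of $\partial\simplex^{d+1}$ is trivial in the relevant range. For $d\ge 2$, every proper induced subcomplex of $\partial\simplex^{d+1}$ is a full simplex (hence contractible), and the complex itself is a $d$-sphere with only $\tilde\beta_d\ne 0$, so $\sep_j(\partial\simplex^{d+1})=0$ for every $j\in\{0,1,\dots,d-1\}$. Consequently, for $i\in\{1,\dots,d-2\}$ each application of \Cref{thm:connectedsum} leaves $\sep_i$ invariant, yielding $\sep_i(\altmanifold) = \sep_i(\manifold)$ directly. For $j\in\{0,d-1\}$ each step instead contributes only the correction $c(d, n+\ell-1, d+2)$, which an elementary calculation reduces to
\[
\frac{1}{d+2}-\frac{1}{d+3}-\frac{1}{n+\ell}+\frac{1}{n+\ell+1} \;=\; \frac{1}{(d+2)(d+3)} - \frac{1}{(n+\ell)(n+\ell+1)}.
\]

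Summing these corrections over $\ell=1,\dots,k$ and applying the telescoping identity
\[
\sum_{\ell=1}^{k} \frac{1}{(n+\ell)(n+\ell+1)} = \sum_{\ell=1}^{k}\left(\frac{1}{n+\ell}-\frac{1}{n+\ell+1}\right) = \frac{1}{n+1} - \frac{1}{n+k+1}
\]
reproduces exactly the formula in the statement. The only real obstacle is bookkeeping: one must carefully track the vertex count of each intermediate $N_{\ell-1}$ so that the correct arguments are substituted into $c(d,\cdot,\cdot)$ at each step; once this is done, the corollary follows from a short telescoping computation with no further difficulty.
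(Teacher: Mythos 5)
Your proposal is correct and follows essentially the same route as the paper: both interpret each $(1,d+1)$-move as a connected sum with $\partial\simplex^{d+1}$ (whose $\sep_i$ vanishes for $0\le i\le d-1$) and iterate \Cref{thm:connectedsum}, the paper phrasing the iteration as an induction on $k$ and you as an explicit telescoping sum of the corrections $c(d,n+\ell-1,d+2)$. The bookkeeping of intermediate vertex counts and the resulting telescoped total agree exactly with the paper's computation.
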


\begin{proof}
We prove the result for $\sep_0$ and by induction on $k$. All other cases follow immediately using the same arguments. For the base case $k=1$ we apply \Cref{thm:connectedsum} to obtain
\begin{align}
\label{nearly-simplex}
\sep_0 (\altmanifold) = \sep_0 (\manifold) + \frac{1}{(d+2)(d+3)} - \frac{1}{n+1} + 	\frac{1}{n+2}.
\end{align}
Let $\altmanifold_k$ be obtained from $\manifold$ by stacking $k$ times, and $\altmanifold_{k+1}$ by stacking once in $\altmanifold_k$. Applying \Cref{thm:connectedsum} to $\altmanifold_k$ (that is, using \Cref{nearly-simplex}) and using the inductive hypothesis we obtain 

\begin{align*}
\sep_0 (\altmanifold_{k+1}) &= \sep_0 (\altmanifold_k)+ \frac{1}{(d+2)(d+3)} - \frac{1}{n+k+1} + 	\frac{1}{n+k+2} \\
& = \sep_0 (\manifold) + \frac{k +1 }{(d+2)(d+3)} - \frac{1}{n+1} + 	\frac{1}{n+k+2}.
\end{align*}
\end{proof}

\begin{remark}
  \label{rem:oned}
The proof of~\Cref{thm:connectedsum} still works for $d=1$, except for the fact that, since $0=d-1$, the contributions for the cases $i=0$ and $i=d-1$ are added yielding
\begin{align*}
 {\sep}_0(\manifold_1 \# \manifold_2) &=  \sep_0(\manifold_1) + \sep_0(\manifold_2) +  
 \frac{2}{3} - \frac{2}{n_1 +1}- \frac{2}{n_2 +1} +\frac{2}{n_1+n_2-2}.
\end{align*}
In particular, for a cycle $C_m$ with $m$ vertices we have that 
\begin{align*}
\sep_0(C_m)= 
\frac{m}6 + \frac2{m+1} -1 =
\frac{(m-2)(m-3)}{6(m+1)}.
\end{align*}
\end{remark}

\subsection{(Nearly)-neighborly manifolds}
\label{ssec:nneighborly}

A $d$-complex is called \emph{$k$-neighborly} if its $(k-1)$-skeleton is complete. This is equivalent to any of the following two equalities:
\[
f_{k-1} = \binom{f_0}{k} 
\quad
\Leftrightarrow
\quad
h_k = \binom{h_1 + k - 1}k  
\] 
Every complex is $1$-neighborly. The only $\lfloor d/2\rfloor + 1$-neighborly sphere is the boundary of a $d+1$-simplex, and $\lfloor d/2\rfloor$-neighborly manifolds are simply called \emph{neighborly}.

Observe that any $k$-neighborly $d$-complex $C$ has $\tilde{\beta}_{j} (C) = 0$ for all $j \leq k-2$, because $\tilde{\beta}_j$ only depends on the $(j+1)$-skeleton, and $(j+2)$-neighborly means that the $(j+1)$-skeleton coincides with that of a simplex, which is $j$-connected. Therefore the following statement necessarily holds.

\begin{proposition}[Bagchi, Datta, Lemma 3.9 of \cite{Bagchi14StellSpheresTightness}]
\label{prop:neighborly}
Let $C$ be a $k$-neighborly simplicial complex of dimension $d$, then $\sep_i (C) = 0$ for all $i \in \{0,\dots,k-2 \}$.

Conversely, if $\sep_i (C) = 0$ then $C$ is (at least) $(i+2)$-neighborly.
\end{proposition}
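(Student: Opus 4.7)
The forward implication is essentially the argument laid out in the paragraph preceding the proposition, applied at the level of every induced subcomplex rather than of $M$ itself. Concretely, if $M$ is $k$-neighborly then for every $W\subseteq V$ the $(k-1)$-skeleton of $M[W]$ coincides with the full $(k-1)$-skeleton of the simplex $\Delta_W$. Since $\tilde\beta_j$ depends only on the $(j+1)$-skeleton, and the $(j+1)$-skeleton of a simplex has trivial reduced homology in degree $j$ (inheriting it from the contractible simplex), we obtain $\tilde\beta_j(M[W])=0$ for every $W$ and every $j\le k-2$. Taking the weighted sum over $W$ then gives $\sep_j(M)=0$.

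For the converse I would argue the contrapositive: assume $M$ is not $(i+2)$-neighborly and exhibit a $W$ with $\tilde\beta_i(M[W])>0$; by non-negativity of every summand in the definition of $\sep_i$ this immediately yields $\sep_i(M)>0$. The clean case is when $M$ admits a minimal non-face $F$ of size exactly $i+2$: by minimality every proper subset of $F$ is in $M$, so $M[F]=\partial\Delta_F$ is an $i$-sphere with $\tilde\beta_i=1$, and we take $W=F$. Algebraically this is simply the observation that $\sep_i(M)=0$ forces the graded Betti number $r_{0,i+2}(I_M)$---which by Hochster's formula (see \Cref{prop:Hochster}) counts minimal non-faces of size $i+2$---to vanish.

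The main obstacle is the case in which every minimal non-face of $M$ has size $s\le i+1$: such an $F$ only contributes $\tilde\beta_{s-2}(M[F])=1$, not $\tilde\beta_i$, so it does not directly contradict $\sep_i(M)=0$. I would resolve this by strong induction on $i$. The base $i=0$ is immediate, since any non-edge $\{u,v\}$ gives $\tilde\beta_0(M[\{u,v\}])=1$ and forces $2$-neighborliness. For the inductive step one exploits the combinatorial-manifold structure to upgrade a minimal non-face of smaller size into an induced subcomplex homotopy-equivalent to $S^i$, typically via an iterated suspension or a join construction across the local sphere structure of a vertex link. This is precisely the step where the ``combinatorial manifold'' hypothesis does work beyond the purely algebraic constraints coming from Hochster's formula, and I expect it to be the most delicate part of the argument.
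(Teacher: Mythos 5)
Your forward direction is exactly the paper's argument (the paper states it in the paragraph preceding the proposition and never repeats it inside a proof environment): $k$-neighborliness forces the $(j+1)$-skeleton of every induced subcomplex to agree with that of a simplex when $j\le k-2$, so every summand $\tilde\beta_j(\manifold[W])$ vanishes. Nothing to add there, except that the combinatorial-manifold hypothesis plays no role in it.

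The converse is where the problem lies, and you have correctly located the weak point but proposed a repair that cannot work. The case you flag as ``the main obstacle'' --- every minimal non-face of size at most $i+1$ --- is not a technical difficulty to be overcome by a suspension or join construction; it is a genuine counterexample to the statement read literally for a single index $i$. Take a stacked $4$-sphere $\sphere$ on $7$ or more vertices: by \Cref{thm:kstacked} it satisfies $\sep_1(\sphere)=0$, yet it has missing edges and hence is not even $2$-neighborly, let alone $3$-neighborly; no induced subcomplex of it has $\tilde\beta_1>0$, so nothing can be ``upgraded''. Your induction also cannot get started in the form you propose, because knowing only $\sep_i(\manifold)=0$ gives you no access to the inductive hypothesis at indices $j<i$. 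The converse is correct only under the cumulative hypothesis $\sep_j(\manifold)=0$ for \emph{all} $j\le i$ (equivalently, reading the proposition as a biconditional over the whole range $i\in\{0,\dots,k-2\}$), and under that reading your argument closes immediately: the inductive hypothesis gives $(i+1)$-neighborliness, hence every minimal non-face has size at least $i+2$, and your ``clean case'' ($\manifold[F]=\partial\Delta_F$ an $i$-sphere, equivalently $r_{0,i+2}=0$ via \Cref{prop:Hochster}) is the only case. The paper offers no proof of the converse at all --- it is imported from Bagchi--Datta --- so on that half there is nothing to compare against; but as written, your step for small minimal non-faces would fail, and it fails because the single-index statement is false rather than merely delicate.
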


More generally, assume that $C$ is close to $(i+2)$-neighborly with only very few $(i+1)$-faces missing. Then $\sep_i (C)$ can only take few distinct values corresponding to the few possibilities for the isomorphism type of the $(i+1)$-skeleton. Here we explore the case $i=0$.

Assume only one edge is missing from the $1$-skeleton of $C$, then $\sep_0 (C) = \frac{1}{(n+1){n \choose 2}}$. The only non-zero contribution comes from the induced subcomplex on the two vertices that do not share an edge. If two edges are missing, these can either be disjoint or they meet in an edge. In the former case we have $\sep_0 (C) =  \frac{2}{(n+1){n \choose 2}}$ in the latter we have  $\sep_0 (C) = \frac{1}{n+1} \left (2/{n \choose 2} + 1/{n \choose 3} \right )$. 
Recall that, for at most $n - d - 2$ edges missing, \Cref{thm:nearlyneighb} provides a bound for the value of $\sep_0$.

\section{The $\sep$-vector of spheres}
\label{sec:spheres}

In this section we refine results stated in previous sections in the case that the simplicial complexes in question are triangulations of $d$-spheres.

\subsection{Euler, Dehn-Sommerville and Alexander relations}
\label{sec:DSeq}
Let $C$ be a simplicial complex. The Euler-Poincar\'e formula for $C$
\[
\chi(C):= \sum_{k=0}^\infty (-1)^k f_k(C) =  \sum_{k=0}^\infty (-1)^k \beta_k(C)
\]
or its reduced version
\[
\tilde \chi(C):= \sum_{k=-1}^\infty (-1)^k f_k(C) =  \sum_{k=-1}^\infty (-1)^k \tilde \beta_k(C)
\]
translates into an expressions for the alternating sum of entries in the $\sep$-vector in terms of the $f$-vector or the $h$-vector (see \Cref{ssec:notation} for a definition of the $h$-vector and some background). They imply the following variation of Lemma 2.5 of \cite{Bha2016} (originally stated for the $\sigma$-vector, into the $\sep$-vector).

\begin{lemma}[Bagchi~\protect{\cite[Lemma 2.5]{Bha2016}}]
  \label{lem:general}
  Let $C$ be a simplicial complex of dimension $d$ with $f$-vector $f(C) = (f_{-1},f_0,f_1, \ldots , f_d)$
  and $\sep$-vector $\sep (C) = (\sep_0,\sep_1, \ldots , \sep_d)$. Then
  \begin{equation}
    \label{eq:general}
    \sum \limits_{i=-1}^{d} (-1)^i {\sep}_i(C) = \sum \limits_{k=-1}^{d} \frac{(-1)^k}{k+2} f_k .
  \end{equation}
\end{lemma}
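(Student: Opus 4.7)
The plan is to expand the left-hand side using the definition of $\sep_i$, swap the two orders of summation, and recognize the reduced Euler--Poincar\'e identity inside the sum. Writing $n=|\Vertices|$, we have
\begin{align*}
\sum_{i=-1}^{d} (-1)^i \sep_i(C)
&= \frac1{n+1}\sum_{W\subset \Vertices}\frac{1}{\binom{n}{|W|}}\sum_{i=-1}^{d}(-1)^i\tilde\beta_i(C[W]).
\end{align*}
For every $W\subset \Vertices$ (including $W=\emptyset$, after a careful check of the conventions for the empty complex as spelled out in \Cref{rem:correct_beta}), the reduced Euler--Poincar\'e formula gives
\[
\sum_{i=-1}^{d}(-1)^i\tilde\beta_i(C[W]) \;=\; \sum_{k=-1}^{d}(-1)^k f_k(C[W]).
\]

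Next, I would write $f_k(C[W])=\#\{F\in C : |F|=k+1,\ F\subset W\}$ and swap the sums to reorganize by faces rather than by induced subcomplexes. For a fixed $(k+1)$-face $F\in C$, the number of subsets $W\subset\Vertices$ of size $j$ containing $F$ is $\binom{n-k-1}{j-k-1}$, so
\[
\sum_{W\supset F}\frac{1}{\binom{n}{|W|}}
= \sum_{\ell=0}^{n-k-1}\frac{\binom{n-k-1}{\ell}}{\binom{n}{\ell+k+1}}.
\]
The main technical step is the evaluation of this sum; this is precisely the special case of \Cref{lemma:binomial_quotients} with parameters $n\leftarrow n-k-1$ and $a=b=k+1$, which yields the value $\frac{n+1}{k+2}$. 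Summing over the $f_k$ faces of dimension $k$ therefore gives
\[
\sum_{W\subset\Vertices}\frac{f_k(C[W])}{\binom{n}{|W|}} = f_k\cdot\frac{n+1}{k+2}.
\]

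Plugging this back and cancelling the factor $\frac{1}{n+1}$ produces
\[
\sum_{i=-1}^{d}(-1)^i\sep_i(C) \;=\; \sum_{k=-1}^{d}\frac{(-1)^k}{k+2}\,f_k,
\]
as required. The only subtle point is the bookkeeping at $W=\emptyset$ and the agreement with the paper's convention that $\tilde\beta_{-1}=1$ on the empty complex and $0$ otherwise; once one checks that the reduced Euler--Poincar\'e formula holds identically on both sides for every $W$, the rest is a straightforward double-counting argument whose core is the Chu--Vandermonde identity packaged as \Cref{lemma:binomial_quotients}. I expect no serious obstacle beyond making sure these boundary conventions line up.
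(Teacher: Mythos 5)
Your proof is correct and follows essentially the same route as the paper's: expand the definition, apply the reduced Euler--Poincar\'e formula to each $C[W]$ (which indeed holds for $W=\emptyset$ under the stated conventions), and double-count by faces using the fact that a $(k+1)$-set is contained in $\binom{n-k-1}{j-k-1}$ subsets of size $j$. The only cosmetic difference is that you evaluate the resulting binomial sum via \Cref{lemma:binomial_quotients} with $a=b=k+1$ (correctly yielding $\frac{n+1}{k+2}$), whereas the paper instead proves the equivalent falling-factorial identity $\sum_{i=0}^{n} i\cdots(i-k)=\frac{(n+1)n\cdots(n-k)}{k+2}$ by induction.
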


Plugging in the expressions relating the $f$-vector to the $h$-vector one easily concludes:

\begin{corollary}[Bagchi~\protect{\cite[Lemma 2.5]{Bha2016}}]
  \label{cor:hvec}

  Let $C$ be a $d$-dimensional simplicial complex with $h$-vector $h(C) = (h_0,h_1, \ldots , h_{d+1})$. Then
  \[
   \sum \limits_{i=-1}^{d} (-1)^i \sep_i (C) =  \sum \limits_{k=0}^{d+1} \frac{(-1)^{k+1}}{(d+2){d+1 \choose k}} h_k. 
   \]
\end{corollary}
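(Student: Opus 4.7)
The plan is to start from \Cref{lem:general}, which already expresses $\sum_{i=-1}^{d}(-1)^i\sep_i(C)$ as an explicit alternating sum of the $f_k$, and then perform a change of basis from the $f$-vector to the $h$-vector. Since the identity we want is purely a linear identity between coordinates of $f(C)$ and $h(C)$, once \Cref{lem:general} is in hand the corollary reduces entirely to a binomial manipulation.

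First I would reindex the right-hand side of \Cref{eq:general} by $i=k+1$, turning it into $-\sum_{i=0}^{d+1}\frac{(-1)^i}{i+1}f_{i-1}$, so that the index matches the standard $f$-vector indexing used in the definition of the $h$-vector. Next I would invoke the usual inverse of the $f$-$h$ transformation for pure $d$-complexes, namely $f_{i-1}=\sum_{k=0}^{i}\binom{d+1-k}{i-k}h_k$, and swap the order of summation. This produces
\[
\sum_{i=-1}^{d}(-1)^i\sep_i(C)
= -\sum_{k=0}^{d+1}(-1)^k h_k\sum_{j=0}^{d+1-k}\frac{(-1)^{j}}{j+k+1}\binom{d+1-k}{j},
\]
where $j=i-k$. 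The claim is then equivalent to the identity
\[
\sum_{j=0}^{n}\frac{(-1)^{j}}{j+a}\binom{n}{j}=\frac{1}{a\binom{n+a}{n}}
\]
with $n=d+1-k$ and $a=k+1$, since the right-hand side simplifies to $\frac{1}{(d+2)\binom{d+1}{k}}$, which matches what the corollary asks for (after taking care of the overall sign).

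The main step, then, is to establish the binomial identity above. I would prove it in the same spirit as \Cref{lemma:binomial_quotients}: clear denominators using $\frac{1}{j+a}=\int_0^1 x^{j+a-1}\,dx$, interchange the finite sum and the integral, and recognize the integrand as $x^{a-1}(1-x)^{n}$, whose integral is the Beta value $\frac{(a-1)!\,n!}{(n+a)!}=\frac{1}{a\binom{n+a}{n}}$. Alternatively one can avoid analysis and derive the identity by partial fractions or by a Chu-Vandermonde argument analogous to the one used in the proof of \Cref{lemma:binomial_quotients}. Either way, this is the only non-trivial step; once it is in place, substituting it back and comparing signs yields the stated formula.

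I do not anticipate any real obstacle: the corollary is essentially a bookkeeping exercise built on top of \Cref{lem:general} combined with a standard Beta-integral/Chu-Vandermonde type identity. The one thing to be careful about is the sign conventions (in particular the roles of $\sep_{-1}$, the $f_{-1}=1$ term, and the extra minus sign coming from the reindexing), so I would double-check the base cases $d=0$ and $d=1$ at the end to make sure the constants $(d+2)\binom{d+1}{k}$ are correctly placed.
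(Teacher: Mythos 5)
Your proposal is correct and matches the paper's proof essentially step for step: both substitute the $f$-to-$h$ relation of \Cref{prop:h-vector-def} into \Cref{lem:general}, swap the order of summation, and reduce the coefficient of $h_k$ to the identity $\sum_{j=0}^{n}\frac{(-1)^j}{j+a}\binom{n}{j}=\frac{1}{a\binom{n+a}{n}}$ with $n=d+1-k$, $a=k+1$. The only cosmetic difference is that you sketch a proof of that identity (Beta integral / Chu--Vandermonde), whereas the paper simply cites it as Melzak's formula.
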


Recall that when $C$ is topologically a $d$-sphere the $h$-vector entries satisfy the Dehn-Sommerville equations $h_k= h_{d+1-k}$. For even dimensional spheres, \Cref{cor:hvec} simply states $\sum_{i=-1}^{d} (-1)^i \sep_i (C) = 0$. This can also be deduced from Alexander duality, or, more precisely, is implied by the following statement.

\begin{lemma}[Bagchi and Datta \protect{\cite[Lemma 2.2]{Bagchi14StellSpheresTightness}}]
\label{lemma:duality}
  Let $\sphere$ be a triangulated $d$-sphere, then
  \[ 
  \sep_i = \sep_{d-i-1}, \ \ 
  \text{for all } \ -1 \leq i \leq d.
  \]
\end{lemma}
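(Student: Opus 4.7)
The plan is to derive the identity directly from the topological Alexander duality formula already recorded in the excerpt, namely
\[
\tilde\beta_i\bigl(\sphere[W]\bigr) = \tilde\beta_{d-1-i}\bigl(\sphere[V\setminus W]\bigr)
\]
for every $W\subset V$, combined with the manifest symmetry of the weights under complementation, i.e.\ $\binom{|V|}{|W|} = \binom{|V|}{|V|\setminus|W|}$.

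First I would fix a ground set $V$ for $\sphere$ (by \Cref{thm:ground_set} the value of $\sep_i$ is independent of this choice, so we may as well take $V$ to be the vertex set of $\sphere$) and write
\[
\sep_i(\sphere) = \frac1{|V|+1}\sum_{W\subset V} \frac{\tilde\beta_i(\sphere[W])}{\binom{|V|}{|W|}}.
\]
Applying Alexander duality term-by-term replaces the numerator by $\tilde\beta_{d-1-i}(\sphere[V\setminus W])$. I would then change the summation variable to $W' := V\setminus W$; the binomial weight is unchanged because $\binom{|V|}{|W|}=\binom{|V|}{|V|-|W|}=\binom{|V|}{|W'|}$, and as $W$ ranges over all subsets of $V$ so does $W'$. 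The resulting sum is the defining expression for $\sep_{d-i-1}(\sphere)$.

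There is essentially no obstacle here, but two small points are worth double-checking. One is that Alexander duality applies in the form stated for every induced subcomplex $\sphere[W]$, including the degenerate cases $W=\emptyset$ and $W=V$; the convention $\tilde\beta_{-1}(\emptyset)=0$ adopted in the paper (see \Cref{rem:correct_beta}) ensures compatibility on both ends, since for the full sphere one has $\tilde\beta_i(\sphere)=\tilde\beta_{d-1-i}(\emptyset)$ only when $i\in\{-1,d\}$, and both sides vanish in the remaining range. The second is that the statement covers the full range $-1\le i\le d$; this is automatic from the proof above since the change of variable $W\mapsto V\setminus W$ treats all degrees uniformly. Together these yield $\sep_i(\sphere)=\sep_{d-1-i}(\sphere)$ for every such $i$.
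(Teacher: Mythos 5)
Your proof is correct and is essentially the paper's own argument: the paper's one-line proof likewise observes that by Alexander duality the contribution of each $W$ to $\sep_i$ equals the contribution of $V\setminus W$ to $\sep_{d-i-1}$, the binomial weights being symmetric under complementation. Your additional check of the degenerate cases $W=\emptyset$ and $W=V$ under the paper's convention $\tilde\beta_{-1}(\emptyset)=1$, $\tilde\beta_j(\emptyset)=0$ for $j\neq -1$ is a welcome bit of extra care but does not change the route.
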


\begin{proof}
Let $V$ be the vertex set of $\sphere$.
By Alexander duality, the contribution of each $W$ to $\sep_i$ equals the contribution of $V\setminus W$  to $ \sep_{d-i-1}$.
\end{proof}

For odd-dimensional spheres we can combine the Dehn-Sommerville relations and Alexander duality in order to obtain yet another special case of \Cref{lem:general}.

\begin{corollary}
  \label{cor:oddsphere}
  Let $\sphere$ be a triangulation of the $d$-sphere, $d$ odd. Then
   \begin{align}
   \sum \limits_{i=-1}^{d} (-1)^i \sep_i (C)
   &=
   (-1)^{\frac{d-1}{2}} \sep_{\frac{d-1}{2}} + 2 \sum \limits_{i=-1}^{\frac{d-3}{2}} (-1)^i \sep_i (C) \\
   &= 
   \frac{1}{d+2} \left ( 2 \sum \limits_{k=0}^{\frac{d-1}{2}} \frac{(-1)^{k+1} h_k}{{d+1 \choose k}} + \frac{(-1)^{\frac{d-1}{2}} h_{\frac{d+1}{2}}}{{d+1 \choose (d+1)/2}} \right ). 
   \end{align}
\end{corollary}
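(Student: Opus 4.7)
The proof rests on three ingredients already in hand: the Alexander duality symmetry $\sep_i=\sep_{d-i-1}$ from Lemma 4.3, the Euler-type identity of Corollary 4.2 expressing the alternating sum $\sum_{i=-1}^d(-1)^i\sep_i$ in terms of the $h$-vector, and the Dehn-Sommerville relations $h_k=h_{d+1-k}$ for a triangulated $d$-sphere. The strategy is to obtain each of the two stated equalities by performing a careful pairing of indices, using duality on the $\sep$-side for the first and Dehn-Sommerville on the $h$-side for the second.

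For the first equality, I would pair up terms in $\sum_{i=-1}^d(-1)^i\sep_i$ via the involution $i\mapsto d-i-1$ on $\{-1,0,\ldots,d\}$. Since $d$ is odd, this involution has the unique fixed point $i=(d-1)/2$, and all other indices split into pairs $(i,d-i-1)$ with $i<(d-1)/2$. For such a pair the signs satisfy $(-1)^{d-i-1}=(-1)^i$ (using that $d$ is odd), so Lemma 4.3 combines the two terms into $2(-1)^i\sep_i$. Adding the lone fixed-point contribution $(-1)^{(d-1)/2}\sep_{(d-1)/2}$ yields the first equality.

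For the second equality I would start from Corollary 4.2,
$$\sum_{i=-1}^d(-1)^i\sep_i(C)=\frac{1}{d+2}\sum_{k=0}^{d+1}\frac{(-1)^{k+1}h_k}{\binom{d+1}{k}},$$
and then pair the right-hand sum using the involution $k\mapsto d+1-k$ on $\{0,\ldots,d+1\}$, which (again because $d$ is odd) has unique fixed point $k=(d+1)/2$. The binomial coefficients are symmetric, Dehn-Sommerville identifies $h_k$ with $h_{d+1-k}$, and the odd parity of $d$ gives $(-1)^{(d+1-k)+1}=(-1)^{k+1}$, so each pair $(k,d+1-k)$ with $k<(d+1)/2$ contributes twice its common value. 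The fixed-point term $k=(d+1)/2$ carries the sign $(-1)^{(d+1)/2+1}=(-1)^{(d+3)/2}=(-1)^{(d-1)/2}$, delivering the claimed expression.

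The proof is essentially bookkeeping; the only real obstacle is to keep the sign manipulations and the identification of the middle index straight in both pairings. Both are completely parallel in structure, and both rely crucially on $d$ being odd, which is precisely the hypothesis that guarantees a single middle term in each case and the equality of the two signs in a pair.
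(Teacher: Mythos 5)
Your proposal is correct and follows the same route as the paper, which simply invokes the Dehn--Sommerville relations $h_k=h_{d+1-k}$ and the Alexander duality symmetry $\sep_j=\sep_{d-1-j}$ and leaves the index-pairing bookkeeping implicit. Your explicit verification of the involutions, their unique fixed points at $i=(d-1)/2$ and $k=(d+1)/2$, and the sign computations fills in exactly the details the paper omits.
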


\begin{proof}
Since $\sphere$ is a triangulation of the $d$-sphere, we  have $h_k = h_{d+1-k}$, $0 \leq k \leq d+1$, and $h_0=1$. Moreover, we have $ \sep_j =  \sep_{d-1-j}$, $0 \leq j \leq d-1$, by 
\Cref{lemma:duality}.
\end{proof}

For dimension three this takes the following simple form.

\begin{corollary}
  \label{cor:3d}
  For every triangulated $3$-sphere $\sphere$ we have ${\sep}_0 = {\sep}_2$ and
\begin{equation}
    2 {\sep}_0 - {\sep_{1}}  = \frac{h_1(h_1+1)}{10(h_1+5)}  - \frac{h_2}{30} . 
\end{equation}
Equivalently, in terms of the $f$- and $g$-vector
\begin{equation}
\label{eq:sep_for_3spheres_fg}
    2 {\sep}_0 - {\sep_{1}}  
    = \frac{f_0^2-4f_0+5}{5(f_0+1)}  - \frac{f_1}{30},
    \qquad 
    2 {\sep}_0 - {\sep_{1}}  
    = \frac{g_1(g_1+1)}{15(g_1+6)}   - \frac{g_2}{30}.
\end{equation}
\end{corollary}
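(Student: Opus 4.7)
The plan is to specialize \Cref{cor:oddsphere} to $d=3$, compute $\sep_{-1}$ explicitly, and then eliminate it. The identity $\sep_0=\sep_2$ comes for free from the Alexander-duality relation $\sep_i=\sep_{d-i-1}$ stated just above the corollary. That same relation gives $\sep_{-1}=\sep_3$, so the alternating sum on the left of \Cref{cor:oddsphere} collapses to $-2\sep_{-1}+2\sep_0-\sep_1$, while the right-hand side for $d=3$ is
\[
\frac{1}{5}\left(-2h_0+\frac{h_1}{2}-\frac{h_2}{6}\right)=-\frac{2}{5}+\frac{h_1}{10}-\frac{h_2}{30},
\]
using $h_0=1$.

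The next step is to pin down $\sep_{-1}(\sphere)$. By the convention adopted in \Cref{rem:correct_beta}, $\tilde\beta_{-1}(\sphere[W])$ is zero for every non-empty induced subcomplex and equals $1$ only for the empty complex $\{\emptyset\}$. Hence the only term $W=\emptyset$ contributes to the definition of $\sep_{-1}$, giving $\sep_{-1}(\sphere)=1/(n+1)$. Since $h_1=f_0-d-1=f_0-4=n-4$ for a triangulated $3$-sphere, this becomes $\sep_{-1}(\sphere)=1/(h_1+5)$. Substituting into the previous equation yields
\[
2\sep_0-\sep_1=-\frac{2}{5}+\frac{h_1}{10}-\frac{h_2}{30}+\frac{2}{h_1+5},
\]
and placing the three terms not involving $h_2$ over the common denominator $10(h_1+5)$ collapses their numerator to $h_1(h_1+1)$, yielding the $h$-vector form of the formula.

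The $f$- and $g$-vector versions then follow from the routine substitutions $h_1=f_0-4=g_1+1$ and $h_2=f_1-3f_0+6=g_2+g_1+1$, each giving a rational function in $f_0,f_1$ or $g_1,g_2$ that simplifies via short algebraic manipulations (exploiting the factorizations $(f_0-2)(f_0+1)$ and $(g_1+1)(g_1+6)$, respectively) to the stated closed forms. There is no real obstacle in the proof; the only subtlety worth highlighting is that the nonzero Betti number $\tilde\beta_{-1}$ of the empty subcomplex is precisely what is needed to turn \Cref{cor:oddsphere} into an expression involving only $\sep_0$, $\sep_1$, and the face-vector data.
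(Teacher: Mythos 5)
Your proof is correct and takes essentially the same route as the paper: specialize \Cref{cor:oddsphere} to $d=3$, substitute $\sep_{-1}=1/(f_0+1)=1/(h_1+5)$, and carry out the same algebraic simplifications for the $h$-, $f$- and $g$-vector forms. One tiny slip: the fact that $\tilde\beta_{-1}$ vanishes for every non-empty induced subcomplex and equals $1$ only for $C[\emptyset]$ is recorded in the remark on Betti numbers in \Cref{ssec:notation}, not in \Cref{rem:correct_beta} (which concerns the convention for $\tilde\beta_0$), but this does not affect the argument.
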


\subsection{Stacked spheres}
\label{ssec:stacked}
A closed $d$-manifold is called \emph{$k$-stacked} if it is the boundary of a $(d+1)$-manifold whose interior faces all have dimension greater than $d-k$. The only $0$-stacked manifold is the boundary of a $(d+1)$-simplex, and $1$-stacked manifolds are simply called stacked.
In this section we explore bounds and exact values for the entries of the $\sep$-vector for $k$-stacked $d$-spheres.
Stackedness is stronger (and more interesting) for small values of $k$.

For the $\sep$-vector we have the following result first proven by Bagchi \cite[Lemma 3]{Bagchi15TightnessCrit}. 

\begin{theorem}
  \label{thm:kstacked}
  Let $\sphere$ be a $k$-stacked $d$-sphere. Then $\sep_i (\sphere) = 0$ for $k \leq i \leq d-k-1$.
\end{theorem}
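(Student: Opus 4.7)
The plan is to prove the stronger pointwise statement that $\tilde\beta_i(\sphere[W])=0$ for every $W\subset \Vertices$ whenever $k\le i\le d-k-1$, which then gives $\sep_i(\sphere)=0$ after summing over $W$. First I would fix a $(d+1)$-ball $\ball$ witnessing the $k$-stackedness of $\sphere$, so that $\partial \ball=\sphere$ and every face of $\ball$ not in $\sphere$ has dimension $>d-k$. Equivalently $\ball$ and $\sphere$ share the same $(d-k)$-skeleton, so for every $W\subset \Vertices$ the relative chain complex of the pair $(\ball[W],\sphere[W])$ is supported in dimensions $>d-k$. Hence $H_i(\ball[W],\sphere[W])=0$ for $i\le d-k$, and the long exact sequence of the pair yields
\[
\tilde\beta_i(\sphere[W])=\tilde\beta_i(\ball[W])\qquad\text{for every }i\le d-k-1.
\]

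The decisive step is to turn $\ball$ into a sphere by coning its boundary. I will introduce an auxiliary vertex $v_0\notin \Vertices$ and set $\tilde \ball:=\ball\cup(v_0*\sphere)$; gluing the two $(d+1)$-balls $\ball$ and $v_0*\sphere$ along their common boundary $\sphere$ makes $\tilde \ball$ a $(d+1)$-sphere on ground set $\Vertices\cup\{v_0\}$. For any $W'\subset \Vertices$ one has $\tilde \ball[W'\cup\{v_0\}]=\ball[W']\cup(v_0*\sphere[W'])$; since the cone $v_0*\sphere[W']$ is contractible, Mayer--Vietoris identifies $\tilde H_i(\tilde \ball[W'\cup\{v_0\}])\cong H_i(\ball[W'],\sphere[W'])$, which vanishes for $i\le d-k$ by the computation of the previous paragraph.

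To finish, I would apply Alexander duality on the $(d+1)$-sphere $\tilde \ball$: for every $W\subset \Vertices$ (so $v_0\notin W$),
\[
\tilde\beta_i(\ball[W])=\tilde\beta_i(\tilde \ball[W])=\tilde\beta_{d-i}\bigl(\tilde \ball[(\Vertices\setminus W)\cup\{v_0\}]\bigr),
\]
and the right-hand side vanishes whenever $d-i\le d-k$, i.e.\ $i\ge k$. Combining with the first paragraph, $\tilde\beta_i(\sphere[W])=0$ for $k\le i\le d-k-1$ and every $W$, as required.

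The main obstacle is spotting the auxiliary-sphere trick: Alexander duality inside $\sphere$ alone pairs the interesting range $[k,d-k-1]$ with itself and so cannot by itself force any vanishing, so one has to import duality from the bigger sphere $\tilde \ball$ built out of $\ball$. Everything else is routine, namely the chain-level comparison of two matching skeleta, a cone retraction, and a single Mayer--Vietoris (or pair) sequence. A minor technicality is that the witness $(d+1)$-manifold in the definition of $k$-stackedness may be taken to be a ball, which is the standard situation when the boundary is a sphere, and is what makes $\tilde \ball$ genuinely a sphere rather than an arbitrary closed manifold.
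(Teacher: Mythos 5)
Your argument is correct and follows essentially the same route as the paper: both construct the auxiliary $(d+1)$-sphere $\ball\cup(v_0*\sphere)$ by coning off the boundary of the $k$-stacked ball, observe that the relevant induced subcomplexes containing the apex have vanishing low-dimensional reduced homology (you via excision/Mayer--Vietoris against the pair $(\ball[W],\sphere[W])$, the paper via comparison with the contractible cone), and conclude by Alexander duality in that bigger sphere. The only cosmetic difference is that you phrase the skeleton comparison through the long exact sequence of a pair where the paper simply notes that $\tilde\beta_i$ depends only on the $(i+1)$-skeleton; and your remark that the witness manifold must be taken to be a ball is an assumption the paper makes implicitly as well.
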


\begin{remark}
\label{rem:k_stellated}
Adiprasito \cite{Adiprasito18Lefschetz} (see \cite[Corollary 6.5]{Adiprasito15ToricChordality} for the case of simplicial polytopal spheres) proves that for every induced subcomplex $\sphere[W]$ of a triangulated $d$-sphere $\sphere$ and for every $k\leq(d-1)/2$,
\[
\tilde \beta_{k}(\sphere[W]) \le g_{k+1}(\sphere).
\]
The fact that triangulated $k$-stacked $d$-spheres, $k \le (d-1)/2$, have $g_{i+1}(\sphere)=0$, $k \leq i \leq d-k-1$, makes \Cref{thm:kstacked} a special case of this statement.
\end{remark}

Spheres which are $1$-stacked are often simply referred to as {\em stacked}. 
Stacked $d$-spheres are precisely those triangulated $d$-spheres which can be obtained from the boundary of the $(d+1)$-simplex by a number of bistellar $(1,d+1)$-moves, or, equivalently, which can be written as the connected sum of boundaries of the $(d+1)$-simplex.
This implies that the stacking order is a perfect elimination order with in-degree sequence $(0,1,2,\dots,d+1,\dots,d+1)$.
Thus if $\sphere$ is a stacked sphere, we have $\sep_{-1}(\sphere)=\sep_d(\sphere) =\frac1{f_0+1}$ and, due to \Cref{cor:stacking},

\begin{equation*}
 \sep_0 (\sphere) =  \sep_{d-1} (\sphere)
    =  \frac{n-d-2}{(d+2)(d+3)} - \frac{1}{d+3} + \frac{1}{n+1} 
    =  \frac{n-2d-4}{(d+2)(d+3)} + \frac1{n+1}.
\end{equation*}

Recall that stacked spheres maximize $\sep_0$ among all normal pseudo-manifolds with a given dimension and number of vertices, see \Cref{thm:BDSS}. 
Moreover, \Cref{thm:kstacked} implies that they have $\sep_i = 0$ for all other entries. In particular, the $\sep$-vector of a stacked sphere is completely determined by its dimension and its number of vertices, and it is extremal among all normal-pseudo-manifolds of a given dimension and number of vertices: $\sep_0$ is maximal and every other $\sep_i$ is minimal.

\subsection{Nearly stacked spheres}
\label{ssec:nstacked}

We now focus on complexes which are not stacked, but {\em nearly stacked}, meaning that $g_2=1$ instead of $0$. That is, we call a $d$-sphere with $n$ vertices nearly stacked if it has $f_1(\sphere) = (d+1)n - {d+2 \choose 2}+1$ edges, one more than the number of edges of a stacked sphere \cite{Kalai87RigidityLBT}. 
These spheres have the following complete characterisation by Nevo and Novinsky \cite{NevoNovinsky11}: a $d$-sphere $\sphere$ satisfies $g_2(\sphere) = 1$ if and only if it is obtained  either from $\partial \Delta_i \ast \partial \Delta_j$, $i+j = d+1$, $i,j \geq 2$, or from $C_k \ast \partial \Delta_{d-1}$ by an arbitrary number of stacking operations. Here $\partial \Delta_i$ denotes the boundary of an $i$-simplex and $C_k$ is the boundary of a $k$-gon.%
\footnote{Nevo and Novinsky state this result in a more specialized version for simplicial $(d+1)$-polytopes, but they prove it for homology $d$-spheres.}

In the following two lemmas we compute the $\sep$-vectors of these two minimal cases.

\begin{lemma}
\label{lem:simplexjoin}
Let $\sphere=\partial \Delta_i \ast \partial \Delta_j$, $i,j \geq 2$, $d=i+j-1$. Then all entries of the $\sep$-vector of $S$ are $0$ except
\begin{align*}
& \sep_{-1}= \sep_{d}= \frac{1}{d+4} \\
& \sep_{i-1}= \sep_{j-1}=\frac{1}{(d+4)\binom{d+3}{i+1}}, \text{ if } i \neq j \\
& \sep_{i-1}=\frac{2}{(d+4)\binom{d+3}{i+1}}, \text{ if } i =j.
\end{align*}
\end{lemma}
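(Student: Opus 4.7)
The plan is to exploit the join structure of $\sphere=\partial\simplex_i \ast \partial\simplex_j$ in order to compute the reduced Betti numbers of every induced subcomplex explicitly. Writing $V_1$ and $V_2$ for the vertex sets of $\partial\simplex_i$ and $\partial\simplex_j$ respectively, so that $|V_1|=i+1$, $|V_2|=j+1$ and $n=|V|=d+3$, every $W\subset V$ splits as $W=W_1\sqcup W_2$ with $W_k=W\cap V_k$, and the induced subcomplex factors as $\sphere[W]=(\partial\simplex_i)[W_1] \ast (\partial\simplex_j)[W_2]$.

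First I would classify the two factors: $(\partial\simplex_k)[W_k]$ is the empty complex when $W_k=\emptyset$, a simplex (hence contractible) when $1\le|W_k|\le k$, and the sphere $\partial\simplex_k$ itself when $|W_k|=k+1$. Combining this with the standard facts that $X\ast\emptyset=X$, that $X\ast Y$ is contractible whenever either factor is contractible, and that the join of an $(i-1)$-sphere with a $(j-1)$-sphere is a $d$-sphere, the enumeration of induced subcomplexes with any nonzero reduced Betti number reduces to exactly four cases: (i) $W=\emptyset$, contributing $\tilde\beta_{-1}=1$; (ii) $W=V_1$, giving $\sphere[W]=\partial\simplex_i$ with $\tilde\beta_{i-1}=1$; (iii) $W=V_2$, giving $\sphere[W]=\partial\simplex_j$ with $\tilde\beta_{j-1}=1$; and (iv) $W=V$, giving $\sphere[W]=\sphere$ with $\tilde\beta_d=1$. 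All remaining induced subcomplexes are contractible.

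Finally I would plug these four contributions into the defining sum of $\sep$. Each case contributes weight $\tfrac{1}{(n+1)\binom{n}{|W|}}=\tfrac{1}{(d+4)\binom{d+3}{|W|}}$, which for $|W|\in\{0,d+3\}$ equals $\tfrac{1}{d+4}$, yielding $\sep_{-1}=\sep_d=\tfrac{1}{d+4}$; contributions (ii) and (iii) give $\tfrac{1}{(d+4)\binom{d+3}{i+1}}$ and $\tfrac{1}{(d+4)\binom{d+3}{j+1}}$, which are equal because $i+j=d+1$ implies $\binom{d+3}{i+1}=\binom{d+3}{j+1}$. When $i\ne j$ these contribute to distinct entries $\sep_{i-1}$ and $\sep_{j-1}$; when $i=j$ they contribute to the same entry and therefore add, giving the factor of $2$.

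There is no real obstacle: the homotopy analysis of the factors and of their joins is standard, and the rest is a weight bookkeeping. The one sanity check worth recording is the consistency with Alexander duality, since $\sep_{i-1}=\sep_{d-i}=\sep_{j-1}$ is forced by $d-i=j-1$, matching the equality of binomial coefficients used above.
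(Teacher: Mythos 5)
Your proposal is correct and follows essentially the same route as the paper's proof: both rest on the fact that induced subcomplexes of a join are joins of induced subcomplexes, that every proper nonempty induced subcomplex of $\partial\simplex_k$ is contractible, and that consequently only $W\in\{\emptyset,V_1,V_2,V\}$ contribute. Your write-up is merely more explicit about the weight bookkeeping and the identity $\binom{d+3}{i+1}=\binom{d+3}{j+1}$, which the paper leaves implicit.
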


\begin{proof}
Let $V_1$ and $V_2$ be the subsets of vertices of $\partial \Delta_i$ and $ \partial \Delta_j$ respectively. Since induced subcomplexes of the join are joins of the induced subcomplexes and since all induced subcomplexes of $\partial \Delta_i$ are contractible except for the empty one and the full one, the only $W$'s that contribute to the $\sep$-vector are $W\in \{\emptyset, V_1,V_2, V_1\cup V_2\}$, which add respectively to $\sep_{-1}$, $\sep_{i-1}$, $\sep_{j-1}$,  and $\sep_{i+j-1}$, as stated.
\end{proof}

\begin{lemma}
\label{lem:cyclejoin}
Let $\sphere_m=C_{m} \ast \partial \Delta_{d-1}$, with $d\ge 4$. Then all entries of the $\sep$-vector of $\sphere_m$ are $0$ except
\begin{align*}
 \sep_{-1}= \sep_{d}&= \frac{1}{m+d+1} \\
 \sep_{0}= \sep_{d-1} &= \frac{m-3}{(d+3)(d+2)} + \frac1{m+d+1} + \frac1{(d+1)\binom{d+m+1}{d+1}}
- \left(\frac1{d+4} + \frac1{(d+1)\binom{d+4}{3}}\right)
\\
\sep_{1}= \sep_{d-2}&= \frac{1}{(m+d+1)\binom{m+d}{m}}.
\end{align*}
For $d=3$ the same holds except the value of $\sep_1$ is multiplied by a factor of $2$.
\end{lemma}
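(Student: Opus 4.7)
The plan is to exploit the join structure of $\sphere_m = C_m * \partial\Delta_{d-1}$. Write $V_1, V_2$ for the vertex sets of the two factors. For any $W \subseteq V_1 \sqcup V_2$, setting $W_i = W \cap V_i$, we have $\sphere_m[W] = C_m[W_1] * \partial\Delta_{d-1}[W_2]$. Whenever $0 < |W_2| < d$, the factor $\partial\Delta_{d-1}[W_2]$ is a proper simplex, hence contractible, so its reduced Betti numbers vanish; the Künneth formula for joins then implies that only $W_2 \in \{\emptyset, V_2\}$ can contribute to $\sep_*(\sphere_m)$. Moreover, the dimension shift induced by the join gives $\tilde\beta_k(\sphere_m[W]) = \tilde\beta_k(C_m[W_1])$ when $W_2 = \emptyset$ and $\tilde\beta_k(\sphere_m[W]) = \tilde\beta_{k-d+1}(C_m[W_1])$ when $W_2 = V_2$.

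The reduced Betti numbers of $C_m[W_1]$ are elementary: $\tilde\beta_{-1} = 1$ iff $W_1 = \emptyset$; $\tilde\beta_1 = 1$ iff $W_1 = V_1$; and otherwise $C_m[W_1]$ is a disjoint union of $c(W_1)$ paths with $\tilde\beta_0 = c(W_1) - 1$; all other Betti numbers vanish. This forces $\sep_k(\sphere_m) = 0$ for $k \notin \{-1, 0, 1, d-2, d-1, d\}$. The boundary values $\sep_{-1}(\sphere_m) = \sep_d(\sphere_m) = 1/(m+d+1)$ come from the single contributions $W = \emptyset$ and $W = V_1 \sqcup V_2$. The value $\sep_1(\sphere_m) = 1/((m+d+1)\binom{m+d}{m})$ comes only from $W_1 = V_1$, $W_2 = \emptyset$; for $d=3$ an equal second contribution from $W_1 = \emptyset$, $W_2 = V_2$ doubles it. Alexander duality $\sep_i = \sep_{d-i-1}$ then gives $\sep_{d-2}$, and (once $\sep_0$ is known) $\sep_{d-1}$.

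The nontrivial computation is $\sep_0$, which only receives contributions from $0 < |W_1| < m$ with $W_2 = \emptyset$:
\begin{equation*}
\sep_0(\sphere_m) = \frac{1}{m+d+1} \sum_{k=1}^{m-1} \frac{\sum_{|W_1|=k}(c(W_1) - 1)}{\binom{m+d}{k}}.
\end{equation*}
A vertex $v \in W_1$ starts a new component iff its cyclic predecessor lies outside $W_1$, so a double count gives $\sum_{|W_1|=k}c(W_1) = m\binom{m-2}{k-1}$ for $0 < k < m$. The two remaining sums $\sum_k m\binom{m-2}{k-1}/\binom{m+d}{k}$ and $\sum_k \binom{m}{k}/\binom{m+d}{k}$ (after separating the boundary terms at $k \in \{0, m\}$) collapse via \Cref{lemma:binomial_quotients} with parameters $(n,a,b) = (m-2, 1, d+2)$ and $(m, 0, d)$ respectively, producing the closed form
\begin{equation*}
\sep_0(\sphere_m) = \frac{m}{(d+2)(d+3)} - \frac{1}{d+1} + \frac{1}{m+d+1} + \frac{1}{(d+1)\binom{m+d+1}{d+1}},
\end{equation*}
where we have also used $(m+d+1)\binom{m+d}{m} = (d+1)\binom{m+d+1}{d+1}$.

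The last step—the hardest, though still routine—is checking that this closed form agrees with the expression in the lemma. That reduces to the polynomial identity $\frac{1}{d+1} - \frac{1}{d+4} - \frac{1}{(d+1)\binom{d+4}{3}} = \frac{3}{(d+2)(d+3)}$, which follows by clearing denominators over $(d+1)(d+2)(d+3)(d+4)$ and using the factorization $(d+2)(d+3) - 2 = (d+1)(d+4)$. The form chosen in the statement presumably reflects a view of $\sphere_m$ as $\sphere_3$ plus $m-3$ stacking-like modifications, but a direct algebraic comparison suffices.
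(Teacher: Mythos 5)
Your proof is correct, and it takes a genuinely different route from the paper's. The paper proves the $\sep_0$ formula by induction on $m$: it places $\sphere_{m-1}$ and $\sphere_m$ on a common ground set via \Cref{thm:ground_set}, computes the difference $\sep_0(\sphere_m)-\sep_0(\sphere_{m-1})$ by classifying subsets according to whether they contain both or neither of the two cycle-neighbors of the new vertex, and then telescopes against a guessed closed form $f(m)$ whose consecutive differences it verifies separately. You instead compute $\sep_0$ in one shot: the double count $\sum_{|W_1|=k}c(W_1)=m\binom{m-2}{k-1}$ (each component is started by a vertex whose cyclic predecessor is absent) reduces everything to two instances of \Cref{lemma:binomial_quotients}, and the remaining work is the algebraic identity $\frac{1}{d+1}-\frac{1}{d+4}-\frac{1}{(d+1)\binom{d+4}{3}}=\frac{3}{(d+2)(d+3)}$, which I checked and which does follow from $(d+2)(d+3)-2=(d+1)(d+4)$. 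Your treatment of the join structure, the dimension shift $\tilde\beta_k(\sphere_m[W])=\tilde\beta_{k-d+1}(C_m[W_1])$ when $W_2=V_2$, and the $d=3$ doubling of $\sep_1$ all match the statement. What your approach buys is directness: no induction, no dependence on the base case from \Cref{lem:simplexjoin}, no need to guess the closed form, and an arguably cleaner equivalent expression $\frac{m}{(d+2)(d+3)}-\frac{1}{d+1}+\frac{1}{m+d+1}+\frac{1}{(d+1)\binom{m+d+1}{d+1}}$. What the paper's approach buys is the explicit difference formula $\sep_0(\sphere_m)-\sep_0(\sphere_{m-1})$, which is reused verbatim in the proof of \Cref{thm:g21_sep0}; if you adopt your version, that later proof would need your closed form substituted in (a trivial change, since consecutive differences can be read off from it).
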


\begin{proof}
Using again that the only $W$'s that can contribute are those that contain either all or none of the vertices of $ \Delta_{d-1}$, all values are straightforward except for $\sep_0$ and $\sep_{d-1}$, which coincide by Alexander duality. Hence, we focus on $\sep_0$ for the rest of the proof.

First observe that, since any vertex of $\partial \Delta_{d-1}$ is joined to all other vertices of $\sphere_m$ by an edge, for a subcomplex induced by a subset of vertices of $\sphere_m$ to have non-trivial $0$-homology it must only contain vertices of $C_m$. Let $v$ be one vertex of $\sphere_m$ which belongs to the cycle $C_m$. Then if $V$ is the vertex set of $\sphere_{m-1}$, we let $V \cup \{v\}$ be the vertex set of $\sphere_m$, and let $x,y$ be the neighbors of $v$ in $C_m$.  By \Cref{thm:ground_set} we can consider $\sphere_{m-1}$ and $\sphere_{m}$ as having the same ground set $V\cup \{v\}$ and obtain
\begin{align*}
\sep_0(\sphere_{m-1}) 
&= \frac{1}{m+d+1} \sum_{X \subseteq V} \left( \frac{\tilde \beta_0(\sphere_{m-1}[X]) }{{ m+d \choose |X|}} + \frac{\tilde \beta_0(\sphere_{m-1}[X \cup v]) }{{ m+d \choose |X \cup v|}} \right)\\
&= \frac{1}{m+d+1} \sum_{X \subseteq V} \left( \frac{\tilde \beta_0(\sphere_{m-1}[X]) }{{ m+d \choose |X|}} + \frac{\tilde \beta_0(\sphere_{m-1}[X]) }{{ m+d \choose |X \cup v|}} \right)
\end{align*}

Then, the difference between $\sep_0$ of $\sphere_m$ and of $\sphere_{m-1}$ is given by

\begin{align*}
\sep_0(\sphere_m) - \sep_0(\sphere_{m-1}) = &\frac{1}{m+d+1} \sum_{X \subseteq V}  \frac{\tilde \beta_0(\sphere_{m}[X]) - \tilde \beta_0(\sphere_{m-1}[X]) }{{ m+d \choose |X|}} + \\
+&
\frac{1}{m+d+1} \sum_{X \subseteq V} \frac{\tilde \beta_0(\sphere_{m}[X \cup v]) - \tilde \beta_0(\sphere_{m-1}[X \cup v]) }{{ m+d \choose |X \cup v|}}.
\end{align*}

The numerator of the first sum is equal to  $1$ if  $ \{x,y\} \subseteq X$ and $X \ne V(C_m)$, otherwise it is $0$.
The numerator in the second sum is equal to $1$ if $X \cap \{x,y\} = \emptyset$ and $X \neq \emptyset$, otherwise it is $0$. Thus the above equation turns into
\begin{align*}
\sep_0(\sphere_m) - \sep_0(\sphere_{m-1}) & = \frac{1}{m+d+1} \sum_{\substack{ X \subsetneq V \\ \{x,y\}\subseteq X}}  \frac{1 }{{ m+d \choose |X|}} + \frac{1}{m+d+1} \sum_{\substack{X \subseteq V \setminus \{x,y\} \\ X \neq \emptyset}} \frac{1 }{{ m+d \choose |X|+1}} \\
& = \frac{1}{m+d+1} \left(\sum_{k=0}^{m-4} \frac{{m-3 \choose k}}{{m + d \choose k+2 }} +  \sum_{k=1}^{m-3} \frac{{m-3 \choose k}}{{m + d \choose k+1 }}\right) 
 = \frac{1}{m+d+1} \sum_{k=1}^{m-3} \frac{{m-2 \choose k}}{{m + d \choose k+1 }} \\
& = \frac{1}{m+d+1}  \left(  \sum_{k=0}^{m-2} \frac{{m-2 \choose k}}{{m + d \choose k+1 }} - \frac{1}{m+d} - \frac{1}{{m+d \choose d+1}} \right)\\
& \overset{*}{=}	 \frac{1}{m+d+1}  \left(  \frac{m+d+1}{(d+3)(d+2)} - \frac{1}{m+d} - \frac{1}{{m+d \choose d+1}} \right)\\
& = \frac{1}{(d+3)(d+2)} - \frac{1}{m+d+1}  \left(  \frac{1}{m+d} + \frac{1}{{m+d \choose d+1}} \right).
\end{align*}

The equality $\overset{*}{=}$ is a consequence of \Cref{lemma:binomial_quotients} with $n = m-2$, $a=1$ and $b=d+2$.

Let us now introduce 
\[
f(m):=\frac{m+d+1}{(d+3)(d+2)} + \frac1{m+d+1} + \frac1{(d+1)\binom{d+m+1}{d+1}}
\]
and observe that 
\begin{align*}
f(m)&-f(m-1) =\\
&=\frac{1}{(d+3)(d+2)} - \frac1{(m+d+1)(m+d)} + \left(\frac1{(d+1)\binom{d+m+1}{d+1}} - \frac1{(d+1)\binom{d+m}{d+1}}\right)\\
&=\frac{1}{(d+3)(d+2)} - \frac1{(m+d+1)(m+d)} - \left(\frac{\binom{d+m+1}{d+1}-\binom{d+m}{d+1}}{(d+1)\binom{d+m+1}{d+1} \binom{d+m}{d+1}}\right)\\
&=\frac{1}{(d+3)(d+2)} - \frac1{(m+d+1)(m+d)} - \left(\frac{\frac{d+m+1}{m} \binom{d+m}{d+1}-\binom{d+m}{d+1}}{(d+1)\binom{d+m+1}{d+1} \binom{d+m}{d+1}}\right)\\
\end{align*}
\begin{align*}
&=\frac{1}{(d+3)(d+2)} - \frac1{(m+d+1)(m+d)} -  \frac1{m\,\binom{d+m+1}{d+1}}\\
&=\frac{1}{(d+3)(d+2)} - \frac1{(m+d+1)(m+d)} -  \frac1{(m+d+1)\binom{m+d}{d+1}}\\
&=\sep_0(\sphere_m) - \sep_0(\sphere_{m-1}).
\end{align*}

Since $\sep_0(\sphere_3) = 0$ by the previous lemma, for every $m\ge 3$ we have
\begin{align*}
\sep_0(\sphere_m) 
= \sum_{t=4}^m \left ( \sep_0(\sphere_t) - \sep_0(\sphere_{t-1}) \right )
= \sum_{t=4}^m  \left (f(t) - f(t-1) \right )= f(m) - f(3),
\end{align*}
as stated.
\end{proof}

Stacking operations only change two (pairs of) entries in the $\sep$-vector: $\sep_0=\sep_{d-1}$ and $\sep_1=\sep_{d-2}$. The following two statements give the exact minimum and maximum value for these entries among all nearly stacked spheres. The other entries are as described in \Cref{lem:simplexjoin,lem:cyclejoin}.

\begin{theorem}($\sep_0$ of nearly stacked spheres.)
  \label{thm:g21_sep0}
  Let $\altsphere$ be an $n$-vertex triangulation of the $d$-sphere with $g_2 (\altsphere) = 1$ and let
  \[
  c_d(n) : = \frac{n-d-3}{(d+2)(d+3)} +\frac{1}{n+1} - \frac{1}{d+4}.
  \]
  Then
  \begin{align*}
     c_d(n) - \frac1{(d+1)}\left (\frac1{\binom{d+4}{3}} - \frac1{\binom{n+1}{d+1}}\right)
     \le \sep_0 (\altsphere) = \sep_{d-1}(\altsphere) \leq c_d(n).\\ 
  \end{align*}
Equality in the lower bound is attained only by $C_{n-d}*\partial \Delta_{d-1}$.
Equality in the upper bound is attained only by applying $(n-3-d)$ stacking operations to $\partial \Delta_i*\partial \Delta_j$ for any $i,j\ge 2$ with $i+j=d-1$.
\end{theorem}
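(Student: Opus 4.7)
The plan is to apply the Nevo--Novinsky classification recalled just before the theorem: every $d$-sphere $\altsphere$ with $g_2(\altsphere)=1$ is obtained by a sequence of stacking operations applied to one of two families of base complexes, namely $\partial\Delta_i \ast \partial\Delta_j$ with $i+j=d+1$ and $i,j\ge 2$, or $C_{m_0}\ast\partial\Delta_{d-1}$ with $m_0\ge 3$. Since $\sep_0$ of each base case is computed in \Cref{lem:simplexjoin,lem:cyclejoin} and \Cref{cor:stacking} gives a closed formula for the effect of $k$ stackings on $\sep_0$, this will reduce the theorem to extremizing a one-parameter expression within each of the two families.

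For the first family, let $\altsphere$ arise from $\partial\Delta_i\ast\partial\Delta_j$ by $k$ stackings. The base complex has $n_0 = d+3$ vertices, and by \Cref{lem:simplexjoin} its $\sep_0$ vanishes since the nonzero entries of its $\sep$-vector sit at positions $-1,\ i-1,\ j-1,\ d$, none of which is $0$ because $i,j\ge 2$. With $n=n_0+k$, \Cref{cor:stacking} gives
\[
\sep_0(\altsphere) = \frac{n-d-3}{(d+2)(d+3)} - \frac{1}{d+4} + \frac{1}{n+1} = c_d(n),
\]
independent of $i$ and $j$. This proves the upper bound and shows it is attained for every admissible $(i,j)$.

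For the second family, let $\altsphere$ arise from $C_{m_0}\ast\partial\Delta_{d-1}$ by $k$ stackings, so $n_0 = m_0+d$ and $n = m_0+d+k$. Substituting the value of $\sep_0(C_{m_0}\ast\partial\Delta_{d-1})$ from \Cref{lem:cyclejoin} into \Cref{cor:stacking}, the two occurrences of $\tfrac{1}{m_0+d+1}$ cancel and the identity $m_0-3+k = n-d-3$ collapses the two $(d+2)(d+3)$-denominator contributions, yielding
\[
\sep_0(\altsphere) = c_d(n) - \frac{1}{d+1}\left(\frac{1}{\binom{d+4}{3}} - \frac{1}{\binom{d+m_0+1}{d+1}}\right).
\]
Since $m_0\ge 3$, the bracketed quantity is non-negative and strictly increasing in $m_0$. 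This reconfirms the upper bound $c_d(n)$ globally (the overlap $m_0=3$, where $C_3=\partial\Delta_2$, recovers the first family), and it shows that the minimum over the range $3\le m_0\le n-d$ is attained uniquely at $m_0=n-d$, $k=0$. That extremal case is exactly $C_{n-d}\ast\partial\Delta_{d-1}$ and produces precisely the stated lower bound.

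The main obstacle is purely computational bookkeeping: one must verify that the $\frac{1}{m_0+d+1}$ term from \Cref{lem:cyclejoin} cancels exactly with the matching term in \Cref{cor:stacking}, and that the remaining combinatorial coefficients collapse so that the entire $m_0$-dependence is carried by a single binomial quotient. Once this reduction is accomplished, the monotonicity argument controlling the optimum is immediate.
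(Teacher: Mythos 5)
Your proposal is correct and rests on the same three ingredients as the paper's proof: the Nevo--Novinsky classification, the base-case computations in \Cref{lem:simplexjoin,lem:cyclejoin}, and the stacking formula of \Cref{cor:stacking}. The difference is in how the intermediate cases (stacking $k$ times over $C_{m_0}\ast\partial\Delta_{d-1}$) are handled. The paper argues by monotonicity: it compares the increment $\frac{1}{(d+2)(d+3)}-\frac{1}{(m+d)(m+d+1)}$ produced by one stacking against the increment $\frac{1}{(d+2)(d+3)}-\frac{1}{m+d+1}\bigl(\frac{1}{m+d}+\frac{1}{\binom{m+d}{d+1}}\bigr)$ produced by enlarging the cycle by one vertex, concludes that trading cycle length for stackings strictly increases $\sep_0$, and so locates the extremes at $m_0=3$ and $m_0=n-d$. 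You instead collapse everything into the single closed formula
\[
\sep_0(\altsphere)=c_d(n)-\frac{1}{d+1}\left(\frac{1}{\binom{d+4}{3}}-\frac{1}{\binom{d+m_0+1}{d+1}}\right),
\]
which I have checked against \Cref{lem:cyclejoin} and \Cref{cor:stacking}: the $\frac{1}{m_0+d+1}$ terms cancel and $m_0-3+k=n-d-3$ as you claim. This buys you something the paper's route delivers only implicitly: both bounds, and both uniqueness statements for the equality cases, are read off at a glance from the sign and strict monotonicity in $m_0$ of the bracketed term (zero exactly at $m_0=3$, maximal exactly at $m_0=n-d$). The only point worth making explicit is that the first family, whose members all have $\sep_0=c_d(n)$, trivially satisfies the lower bound because the subtracted term is non-negative for $n\ge d+3$; you cover this implicitly via the overlap $C_3=\partial\Delta_2$, but a one-line remark would close the argument cleanly.
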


\begin{proof}
The spheres of the form $\partial \Delta_i *\partial \Delta_j$ all have $\sep_0=0$ and $d+3$ vertices. By \Cref{cor:stacking}, all $d$-spheres with $n$ vertices obtained by stacking $n-d-3$ times have 
\[
\sep_0= \frac{n-d-3}{(d+2)(d+3)} +\frac{1}{n+1} - \frac{1}{d+4} .
\]
Observe that $d$-spheres that are stacked over $C_{3} *\partial \Delta_{d-1}$ are a particular case of this.

On the other extreme, by substituting $m=n-d$ in \Cref{lem:cyclejoin}, the $d$-sphere $C_{n-d} *\partial \Delta_{d-1}$ satisfies
\begin{align*}
 \sep_{0} (C_{n-d} *\partial \Delta_{d-1}) = \frac{n-d-3}{(d+3)(d+2)} + \frac1{n+1} + \frac1{(d+1)\binom{n+1}{d+1}}
- \left(\frac1{d+4} + \frac1{(d+1)\binom{d+4}{3}}\right) .
\\
\end{align*}

Between these two cases are the intermediate ones where we stack $k$ times over $C_{m} *\partial \Delta_{d-1}$, for $m+k=n-d$. 
We only need to show that the value we obtain for $\sep_0$ monotonically increases with $k$. 
For this it suffices to show that if $\sphere'_m$ denotes the sphere obtained by a single stacking of $C_{m-1} *\partial \Delta_{d-1}$, we have $\sep_0(\sphere'_{m}) > \sep_0(C_{m} *\partial \Delta_{d-1})$.
This follows from the fact that, by \Cref{cor:stacking},
\[
\sep_0(\sphere'_m) - \sep_0(C_{m-1} *\partial \Delta_{d-1}) = \frac{1}{(d+2)(d+3)} - \frac{1}{(m+d)(m+d+1)},
\]
while, as seen in the proof of \Cref{lem:cyclejoin},
\[
\sep_0(C_{m} *\partial \Delta_{d-1}) - \sep_0(C_{m-1} *\partial \Delta_{d-1}) 
= \frac{1}{(d+3)(d+2)} - \frac{1}{m+d+1}  \left(  \frac{1}{m+d} + \frac{1}{{m+d \choose d+1}} \right).
\]
\end{proof}

\begin{theorem}($\sep_1$ of nearly stacked spheres.)
  \label{thm:g21_sep1}
  Let $\altsphere$ be an $n$-vertex triangulation of the $d$-sphere with $g_2 (\altsphere) = 1$. Then
  \begin{align*}
    \label{eq:d3}
     \frac{1}{2{n+1 \choose 4}} &\leq \sep_1 (\altsphere) \leq \frac{1}{70}, &&  \text{for $d=3$},\\ 
     \frac{1}{5{n+1 \choose 5}} &\leq \sep_1 (\altsphere)=\sep_2(\altsphere) \leq \frac{1}{280} &&  \text{for $d=4$,}\\ 
     0 &\leq \sep_1 (\altsphere) =\sep_{d-2}(\altsphere) \leq \frac{1}{4 {d+4 \choose 4}}. &&  \text{for $d\ge 5$.}\\ 
  \end{align*}
Equality in the lower bound is attained only by $C_{n-d} *\partial \Delta_{d-1}$ ($d=3,4$) and $\partial \Delta_{i} *\partial \Delta_{j}$ ($i,j\ge 3$, $d \ge 5$). 
Equality in the upper bound is attained only by applying $(n-3-d)$ stacking operations to $C_3 *\partial \Delta_{d-1}$.
\end{theorem}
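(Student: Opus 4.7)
The plan is to reduce the problem to a computation on a short list of ``seed'' complexes via two basic ingredients. First, by the Nevo--Novinsky classification~\cite{NevoNovinsky11}, every $d$-sphere $\altsphere$ with $g_2(\altsphere) = 1$ is obtained by a sequence of stacking operations from either $\partial\Delta_i * \partial\Delta_j$ (some $i+j = d+1$, $i,j \ge 2$) or $C_m * \partial\Delta_{d-1}$ (some $m \ge 3$); note that since $C_3 = \partial\Delta_2$, the case $m=3$ of the second family coincides with the $i=2$ case of the first. Second, by \Cref{coro:flip_sepind}, a $(1,d+1)$-flip changes only the entries $\sep_0 = \sep_{d-1}$ (and $\sep_{-1} = \sep_d$), leaving $\sep_1 = \sep_{d-2}$ invariant for every $d \ge 3$. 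Consequently it suffices to evaluate $\sep_1$ on the two seed families.

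Those values are supplied by \Cref{lem:simplexjoin,lem:cyclejoin}. From the former, $\sep_1(\partial\Delta_i * \partial\Delta_j)$ vanishes unless $\min(i,j)=2$, in which case it equals $\frac{1}{(d+4)\binom{d+3}{3}} = \frac{1}{4\binom{d+4}{4}}$ when $i\ne j$ and $\frac{2}{(d+4)\binom{d+3}{3}} = \frac{1}{70}$ when $i=j=2$ (which forces $d=3$). From the latter, for $m\ge 4$ the sphere $C_m * \partial\Delta_{d-1}$ satisfies $\sep_1 = \frac{c_d}{(m+d+1)\binom{m+d}{m}}$, with $c_3=2$ and $c_d=1$ for $d\ge 4$; this quantity is strictly decreasing in $m$, and any stacked version on $n$ vertices inherits the same $\sep_1$-value.

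The theorem then follows by comparing these numbers case by case. For $d\in\{3,4\}$ the minimum is attained by taking $m$ as large as possible, namely $m = n-d$ (no stacking), which gives $\frac{c_d}{(n+1)\binom{n}{d}}$ and rewrites as $\frac{1}{2\binom{n+1}{4}}$ and $\frac{1}{5\binom{n+1}{5}}$ respectively; the maximum is realized by $\partial\Delta_2 * \partial\Delta_2$ ($d=3$, value $1/70$) or $\partial\Delta_2 * \partial\Delta_3$ ($d=4$, value $1/280$), each of which exceeds the best competitor $\frac{c_d}{(d+5)\binom{d+4}{4}}$ from the $m=4$ branch. For $d\ge 5$ the minimum $0$ is realized exactly by the seeds $\partial\Delta_i * \partial\Delta_j$ with $i,j\ge 3$ (the only seeds whose $\sep_1$ vanishes), and the maximum is again $\frac{1}{4\binom{d+4}{4}}=\sep_1(\partial\Delta_2 * \partial\Delta_{d-1})$, which dominates every $\frac{1}{(m+d+1)\binom{m+d}{m}}$ with $m\ge 4$ since $d+5>4$.

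The only substantive hurdle is bookkeeping: matching each seed to the admissible pair $(i,j)$ or value of $m$ compatible with the prescribed vertex count, and verifying that among the admissible seeds the join $\partial\Delta_2 * \partial\Delta_{d-1} = C_3 * \partial\Delta_{d-1}$ is always the $\sep_1$-maximizer while $C_{n-d}*\partial\Delta_{d-1}$ (for $d\in\{3,4\}$) or $\partial\Delta_i * \partial\Delta_j$ with $i,j\ge 3$ (for $d\ge 5$) is the minimizer. All arithmetic is the routine rewriting $\binom{n+1}{k+1} = \frac{n+1}{k+1}\binom{n}{k}$; the decisive structural inputs, namely the stacking invariance of $\sep_1$ and the Nevo--Novinsky classification, reduce the problem to this finite comparison.
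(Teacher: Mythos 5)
Your proposal follows essentially the same route as the paper's proof: the Nevo--Novinsky classification reduces to the two seed families, the invariance of $\sep_1$ under stacking (\Cref{coro:flip_sepind}) eliminates the stacking operations, and \Cref{lem:simplexjoin,lem:cyclejoin} supply the values for the finite comparison, with the same identification of $C_3*\partial\Delta_{d-1}$ as the maximizer and $C_{n-d}*\partial\Delta_{d-1}$ (resp.\ $\partial\Delta_i*\partial\Delta_j$, $i,j\ge 3$) as the minimizer. The computations and monotonicity checks are correct, so the argument is sound and matches the paper.
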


\begin{proof}
By~\cite[Theorem 1.3]{NevoNovinsky11}, every sphere with $g_2=1$ can be obtained from either $\partial \Delta_i \ast \partial \Delta_j$, $i+j = d+1$, $i,j \geq 2$, or from $C_k \ast \partial \Delta_{d-1}$ or stacking, and \Cref{lem:simplexjoin,lem:cyclejoin} give us the $\sep$-vector in this case.

Since $\sep_1$ does not change under stacking, \Cref{lem:simplexjoin,lem:cyclejoin} directly produce the possible values for $\sep_1$. Moreover, for $d=3,4$ we have that the only case in \Cref{lem:simplexjoin} is the special case $C_3 \ast \partial \Delta_{d-1}$ of \Cref{lem:cyclejoin}. It follows that
\begin{align*}
\sep_1(\altsphere) = &\frac{2}{(m+4)\binom{m+3}3} = \frac{1}{2\binom{m+4}4}, && \text{if $d=3$},\\
\sep_1(\altsphere) = &\frac{1}{(m+5)\binom{m+4}4} = \frac{1}{5\binom{m+5}5}, && \text{if $d=4$}.
\end{align*}
The extremal cases are $m=3$ and $m=n-d$, which give
\begin{align*}
\frac{1}{2\binom{n+1}4} & \le \sep_1(\altsphere) \le \frac{1}{70}, && \text{if $d=3$},\\
\frac{1}{5\binom{n+1}5} &\le \sep_1(\altsphere) \le \frac{1}{280}, && \text{if $d=4$}.
\end{align*}

For $d>4$ the same arguments apply except \Cref{lem:simplexjoin} is no longer a special case of \Cref{lem:cyclejoin}. In fact, what now happens is that both Lemmas give the same upper bound, obtained by $\sphere= \partial \Delta_2 \ast \partial \Delta_{d-1} =C_3 \ast \partial \Delta_{d-1}$. The lower bound is zero, obtained by  $\partial \Delta_i \ast \partial \Delta_{j} $ with $i,j > 2$. That is, we have
\begin{align*}
0 \le \sep_1 (\sphere) & \le \frac{1}{(d+4)\binom{d+3}{3}} = \frac{4}{\binom{d+4}4},  \quad \text{for $d>4$}.
  \end{align*}

\end{proof}

\begin{remark}
  A classification for triangulated $d$-spheres with $g_2 = 2$ is known due to Zheng \cite{Zheng17g2eq2}. All such triangulated $d$-spheres are polytopal. However, the classification itself is significantly more involved than the one for $g_2 = 1$.
\end{remark}

\section{The $\sep$-vector of $3$-spheres and the $\mu$-vector of $4$-manifolds}
\label{sec:conjecture}

In this section we take a closer look at the $\sep$-vector of $3$-spheres and the $\mu$-vector of $4$-manifolds. Thus, before we start, let us first recall the definition of the $\mu$-vector and its relation with the $\sep$-vector. Let $\manifold$ be  a manifold, then \Cref{thm:bagchi-intro} states that
$ \tilde\beta_i(\manifold) \le \mu_i(\manifold) :=\sum_{v\in \Vertices} \sep_{i-1} \lk_\manifold (v)$,
with equality if and only if $\manifold$ is $\F$-tight. 

This statement provides two reasons for studying the $\sep$-vector:

\begin{itemize}
  \item The $\sep$- and $\mu$-vectors can be used to check whether a simplicial complex is tight. This motivated Bagchi and Datta's work in \cite{Bagchi14StellSpheresTightness}.
  \item Upper bounds on $\sep$ of the links in terms of their $f$-vectors can provide lower bounds on the $f$-vector of triangulations of a manifold in terms of its Betti numbers. This technique was successfully used by Burton, Datta, Singh and Spreer in \cite{Burton14SepIndex2Spheres} to obtain an alternative proof for \Cref{thm:lss} below.
\end{itemize}
 
The case of $4$-manifolds and $3$-spheres is especially interesting because here the $\sep$-vector of the links (hence the $\mu$-vector of the manifold) is determined by $\sep_0$. In particular, it is independent of the choice of field. 

In \Cref{ssec:3spheres} we give experimental data on the $\sep$-vector for $3$-spheres with up to $10$ vertices. 
In \Cref{ssec:tightness} we review the concept of $\F$-tightness. In \Cref{ssec:4mflds} we discuss a lower bound for triangulations of $4$-manifolds with automorphism group acting transitive on their vertices based on \Cref{conj:general-intro}.

\subsection{The $\sep$-vector of $3$-spheres}
\label{ssec:3spheres}

By \Cref{cor:3d}, the $\sep$-vector of a sphere $\sphere$ of dimension up to three and with a given $f$-vector is determined by $\sep_0$; that is, choosing a field and looking at the respective reduced Betti numbers reduces to counting connected components of induced subcomplexes of $\sphere$, see \Cref{eq:sep_for_3spheres}. 
Since, moreover,  $\sep_1$ monotonically depends on $\sep_2 = \sep_0$, this value seems to be a good parameter to estimate the ``combinatorial complexity'' of a triangulated $3$-sphere with prescribed $f$-vector. In higher dimensions we can still use $\sep_0$ as a measure of complexity, but it no longer captures the entire information of the $\sep$-vector. 

\Cref{fig:data,fig:data2} present the values of $\sep_0$ for all triangulated $3$-spheres with $g_1 + 5 = f_0 \in \{7,\dots, 10\}$, plotted in terms of $g_2=f_1 - (4f_0 -10)$. Observe that once the number of vertices is fixed, $g_2$ completely characterizes the $f$-vector and it ranges between $0$ (stacked $3$-spheres) and $\binom{g_1+1}{2}$ (neighborly spheres). We do not include plots for $f_0\le 6$ because those have $g_2\in \{0,1\}$ and are completely classified, as described in detail in \Cref{ssec:nstacked}. In particular, there is only one sphere for each $f$-vector, the Billera-Lee sphere.

The enumeration of $3$-spheres up to 10 vertices is due to \cite{Altshuler76CombMnf9VertAll,Lutz08ThreeMfldsWith10Vertices}, and our data is taken from \cite{Lutz08ManifoldPage}. 
The data-points corresponding to Billera-Lee spheres (the maximum) are marked by red triangles.

In addition, using the classification of simplicial $4$-polytopes up to $10$ vertices due to Firsching \cite{FirschingRealizability2017}, we mark every $\sep_0$-value of a simplicial polytopal $3$-sphere with a square. The data for this experiment is taken from \cite{Firsching18PolytopePage}.

These calculations produce additional insight into the nature of the $\sep$-vector:

  The data gives an idea of the \emph{range} of values to expect for the entries of the $\sep$-vector for a set of triangulations of the $3$-sphere with fixed $f$-vector.

\begin{figure}[htbp]
    \includegraphics[width=.46\textwidth]{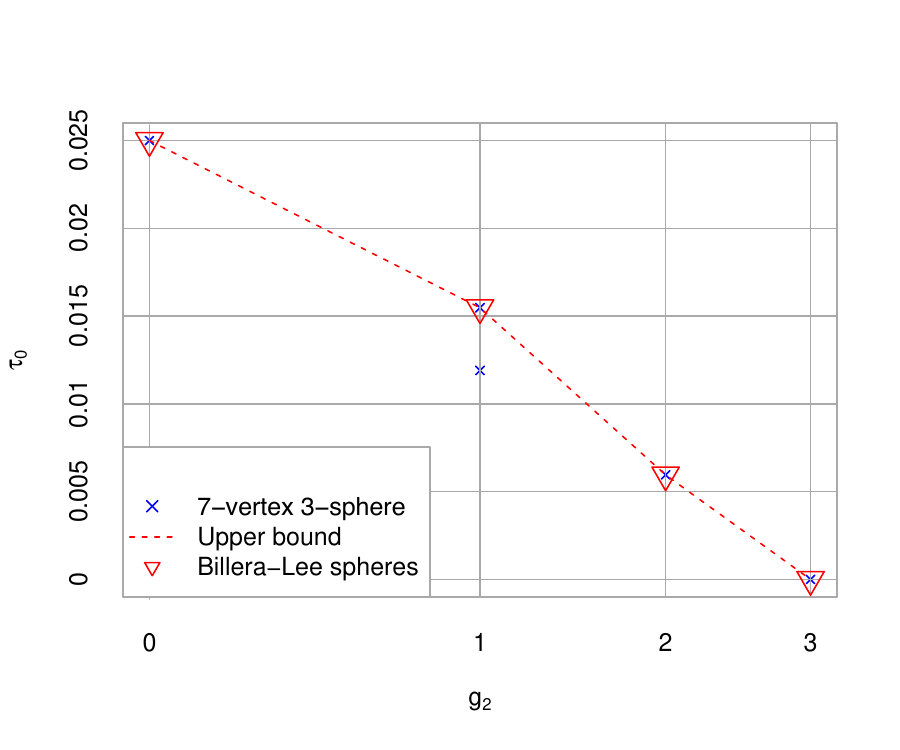} 
    \includegraphics[width=.53\textwidth]{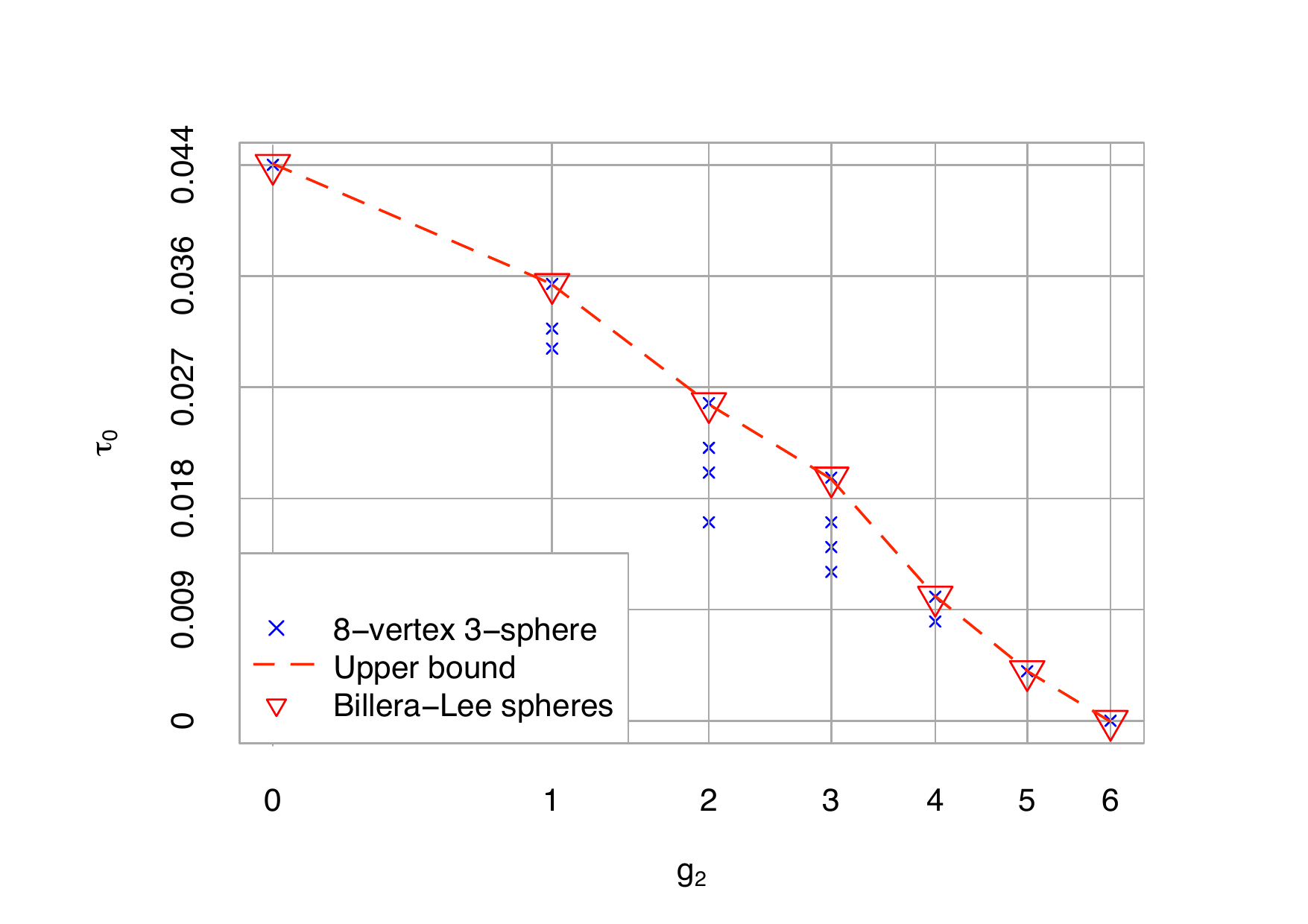} \\
  \caption{Values of $\sep_0$ for all triangulated $3$-spheres with $7$ and $8$ vertices with the upper bound from \Cref{conj:general-intro} and \Cref{thm:upperbound} (dashed line and triangles for the Billera-Lee spheres). All $\sep_0$-values can be realized by simplicial polytopal spheres. \label{fig:data} }
\end{figure}

\begin{figure}[phtb]
    \includegraphics[width=.9\textwidth]{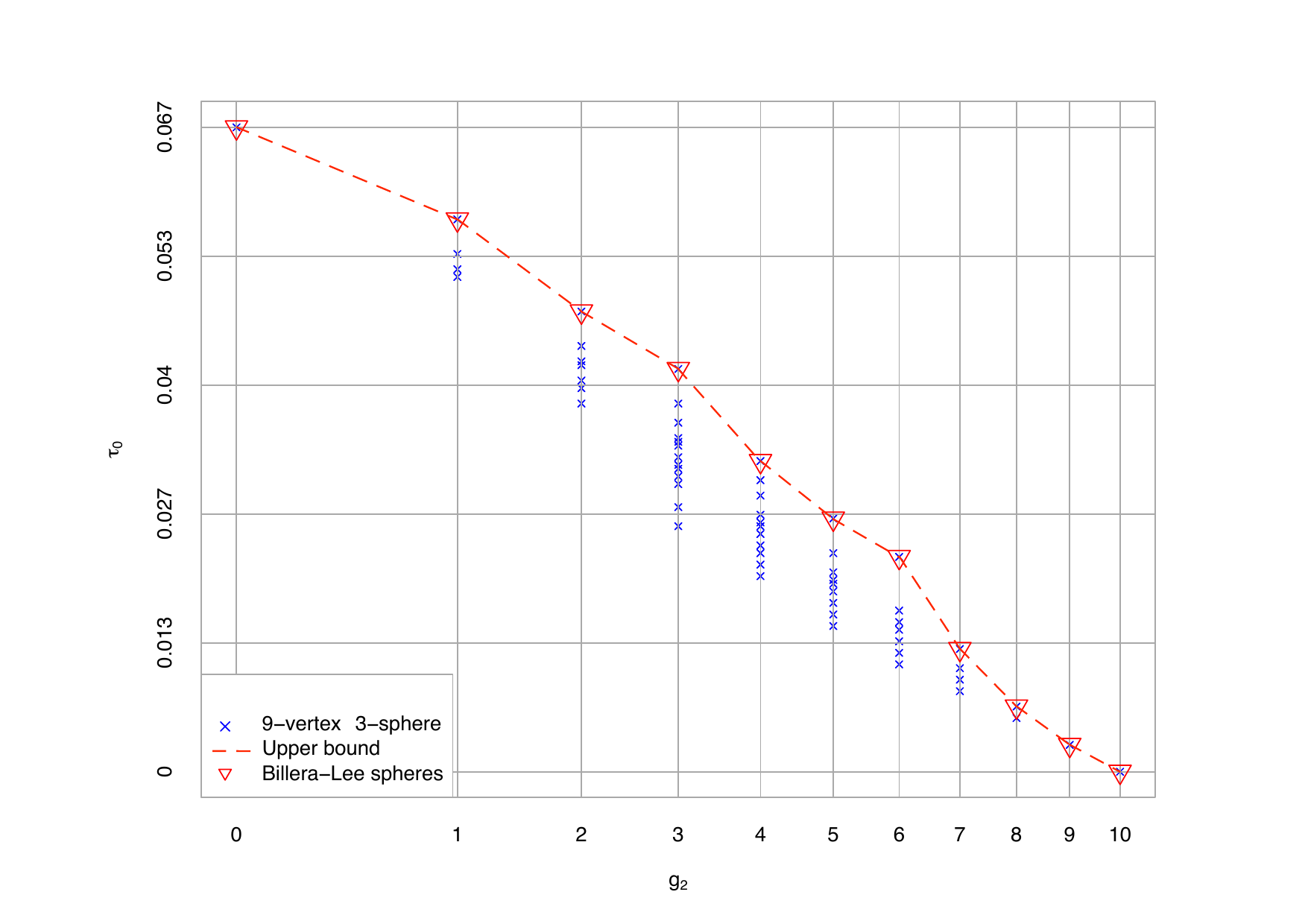} \\ \vskip -1cm
    \includegraphics[width=.9\textwidth]{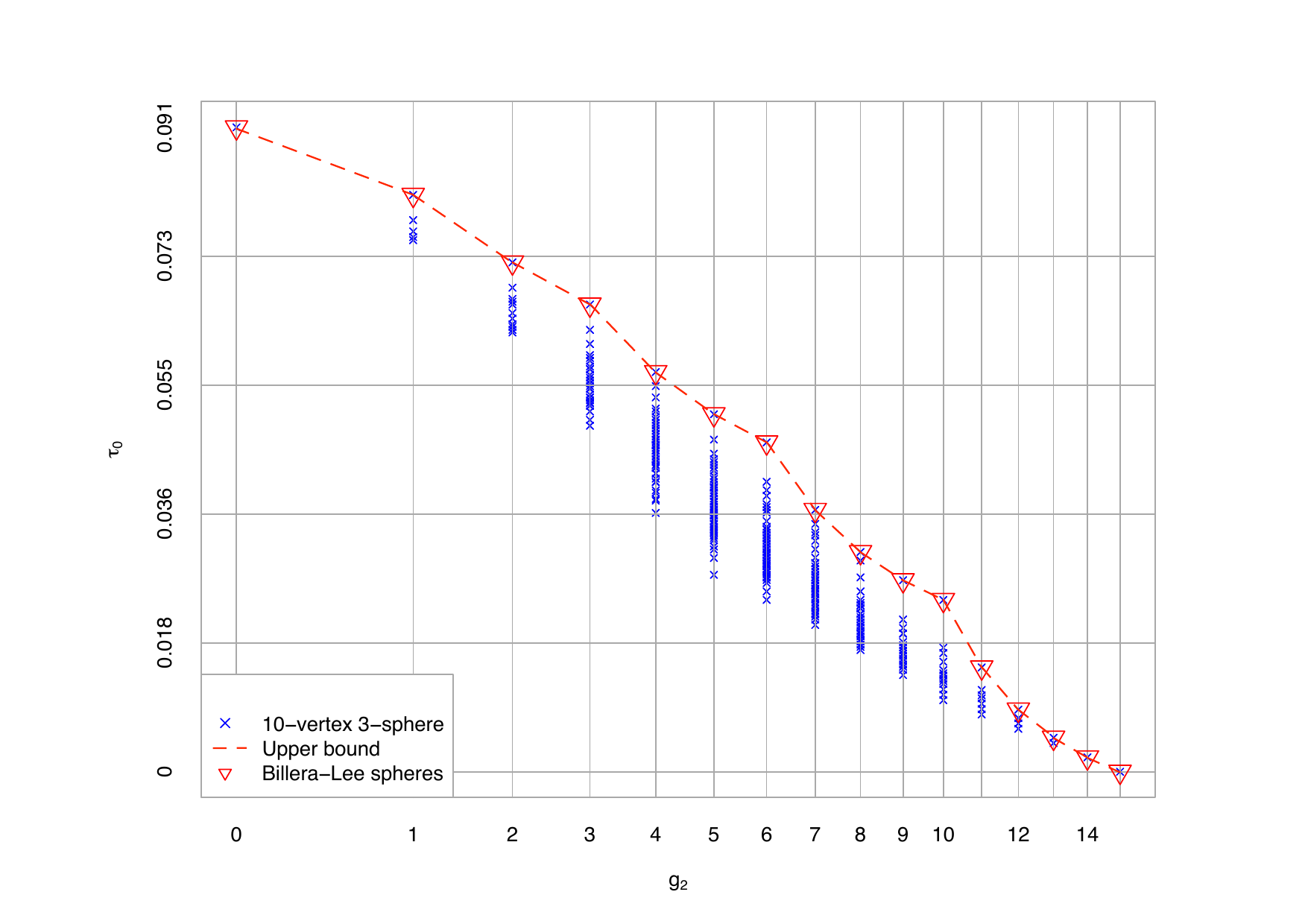} 
    \caption{Values of $\sep_0$ for all triangulated $3$-spheres with $9$ and $10$ vertices. Note that minimal values for $\sep_0$ are not always realizable by simplicial polytopal spheres. \label{fig:data2} }
\end{figure}

\begin{remark}
In general there are many spheres with both the same $\sep$- and $f$-vectors as a given Billera-Lee sphere. For example, there is more than one neighborly $3$-sphere and more than one stacked $3$-sphere for each number of vertices. By \Cref{prop:neighborly} and \Cref{thm:kstacked} these $3$-spheres all have the same $\sep$-vector as the corresponding Billera-Lee spheres. 
Even more, there are examples of $3$-spheres with the $\sep$-vector of a Billera-Lee sphere admitting perfect elimination orderings on their graphs with different in-degree sequences. (Recall that the $\sep$-vector only depends on the in-degree sequence as a multiset, not on its order.)

As an example, the $8$-vertex $24$-edge Billera-Lee sphere has a perfect elimination order with in-degree sequence $(0,1,2,3,4,5,5,4)$. On the other hand, the connected sum of two copies of the cyclic $4$-polytope $C_4(6)$ has a perfect elimination order with in-degree sequence $(0,1,2,3,4,5,4,5)$. (There are actually two  isomorphism types of such connected sums, none of them having the $1$-skeleton of a Billera-Lee sphere.)

\Cref{fig:blspheres} lists the number of $3$-spheres up to $10$ vertices with the $\sep$-vector of the corresponding Billera-Lee sphere.

\begin{figure}[htb]
  \begin{center}
  {\scriptsize
  \begin{tabular}{r||@{\hspace{0.03cm}}r|@{\hspace{0.03cm}}r|@{\hspace{0.03cm}}r|@{\hspace{0.03cm}}r|@{\hspace{0.03cm}}r|@{\hspace{0.03cm}}r|@{\hspace{0.03cm}}r|@{\hspace{0.03cm}}r|@{\hspace{0.03cm}}r|@{\hspace{0.03cm}}r|@{\hspace{0.03cm}}r|@{\hspace{0.03cm}}r|@{\hspace{0.03cm}}r|@{\hspace{0.03cm}}r|@{\hspace{0.03cm}}r|@{\hspace{0.03cm}}r|@{\hspace{0.03cm}}|@{\hspace{0.03cm}}r@{}}
    \toprule
    \diagbox[width=.7cm,height=0.7cm]{$f_0$}{$g_2$}&$0$&$1$&$2$&$3$&$4$&$5$&$6$&$7$&$8$&$9$&$10$&$11$&$12$&$13$&$14$&$15$&$\Sigma$ \\
    \midrule
    \midrule
    $5$ &$1$&	&	&	&	&	&	&	&	&	&	&	&	&	&	& & $1$\\
    \midrule
    $6$ &$1$&$1$	&	&	&	&	&	&	&	&	&	&	&	&	&	& 	& $2$\\
    \midrule
    $7$ &$1$&$1$	&$1$	&$1$	&	&	&	&	&	&	&	&	&	&	&	& 	&$4$\\
    \midrule
    $8$ &$3$&$3$	&$5$	&$2$	&$3$	&$5$	&$4$	&	&	&	&	&	&	&	&	& 	&$25$\\
    \midrule
    $9$ &$7$&$14$	&$29$	&$14$	&$44$	&$48$	&$26$	&$42$	&$111$	&$121$	&$51$	&	&	&	&	& 	&$507$\\
    \midrule
    $10$ &$30$&$82$	&$235$	&$108$	&$541$	&$600$	&$287$	&$1\,019$	&$2\,505$	&$2\,890$	&$993$	&$1\,938$ &$5\,175$	&$13\,241$&$14\,057$&$3\,677$ &$47\,378$\\

    \bottomrule
  \end{tabular}}
  \end{center}

  \caption{Number of isomorphism types of triangulated $3$-spheres with the same $\sep$-vector as the corresponding Billera-Lee sphere. \label{fig:blspheres}}

\end{figure}
\end{remark}

\subsection{Tightness} 
\label{ssec:tightness}

Tightness was defined in a geometric context  by Alexandrov in 1938
in \cite{Alexandrov38ClassClosedSurf} and it is a generalization of convexity. 
Loosely speaking, a manifold is tight in Alexandrov's sense if it does not exhibit any dents or holes other than the ones required by its topology.
For example, a sphere is tight if and only if it is the boundary of a convex ball.

The combinatorial version of tightness we are interested in  was introduced by K\"uhnel in 1995 in \cite{Kuehnel99CensusTight} (Banchoff in 1970 introduced an intermediate version for embedded polyhedral manifolds in \cite{Banchoff65TightEmb3DimPolyMnf,Banchoff71TPPTightNMnfWithBd}):

\begin{definition}
\label{defi:tight}
Let $C$ be a simplicial complex of dimension $d$ with vertex set $\Vertices$ and let $\F$ be a field. We say that $C$ is \emph{$\F$-tight} if for every $W\subseteq \Vertices$ the inclusion 
\[
i_W: C[W]\to C
\]
induces an injective homomorphism in homology for every dimension. 
$C$ is called \emph{tight} if it is tight for at least one field. Equivalently, if it is tight for some prime field $\F_p$; see, for instance, \cite[Lemma 2.8(b)]{BDSTight3Mflds}.
\end{definition}

\begin{remark}
  \label{rem:tight}
  According to \Cref{defi:tight} a simplicial complex is tight, if every homological feature appearing in an induced subcomplex persists in the entire complex. Examples of simplicial complexes which are not tight include connected complexes with missing edges: the induced subcomplex on a subset $W=\{v_1,v_2\}$ with $v_1$ and $v_2$ not spanning an edge does not inject in $0$-dimensional homology. More generally, in a tight simplicial complex $C$, the boundary of every minimal non-face of size $d+2$ must be a generator of the $d$-th homology group of $C$.  
\end{remark}

One source of interest for tight triangulations lies within the following conjecture. 

\begin{conjecture}[Lutz and K\"uhnel \cite{Kuehnel99CensusTight}] 
  \label{conj:LutzKuehnel}
  Tight triangulations of a manifold $\manifold$ minimize every entry of the $f$-vector among all triangulations of~$\manifold$.
\end{conjecture}

\Cref{conj:LutzKuehnel} is trivially true in dimension 2, and it has recently been proven in dimension 3 by Bagchi, Datta and Spreer \cite{Bagchi16Tight3Mflds}.
More generally, most examples of tight triangulations fall in one of two classes for which \Cref{conj:LutzKuehnel} can be shown to hold. In the remainder of this section we 
\begin{enumerate}[(a)]
  \item describe these two classes; 
  \item explain why \Cref{conj:LutzKuehnel} holds in these cases; and
  \item discuss how the $\sep$-vector of an example of a tight triangulation not falling in one of these two classes together with the upper bound from \Cref{conj:general-intro} produces a gap in the inequality $ \tilde\beta_i(\manifold) \le \mu_i(\manifold) :=\sum_{v\in \Vertices} \sep_{i-1} \lk_\manifold (v)$ from \Cref{thm:bagchi-intro}, yielding a surprising new viewpoint on the conjecture, see \Cref{ex:tight}.
\end{enumerate}

As explained in \Cref{rem:tight}, every tight triangulation of a $d$-manifold is $2$-neighborly and the converse is true in dimension two. In arbitrary dimensions, two classes cover most known examples of tight triangulations: (a) Every $(k+1)$-neighborly triangulation of a $2k$-manifold is tight (This follows from \cite[Corollary 4.7]{Kuehnel95TightPolySubm}). We call such triangulations \emph{tight of neighborly type} (this class includes all tight $2$-manifolds);
(b) Every $2$-neighborly and stacked triangulation is tight \cite{Kalai87RigidityLBT,Novik08SocBuchsMod}. We call such triangulations \emph{tight of stacked type} (this class includes all tight $3$-manifolds by 
\cite{Bagchi16Tight3Mflds}). These two types of tight triangulations correspond to the cases $k=1$ and $k=d/2$ of Theorem 13 in \cite{Bagchi15TightnessCrit}.

The following two statements characterize these two classes of tight triangulations in terms of their $f$-vector (and the homology of the manifold), providing further justification for \Cref{conj:LutzKuehnel}:
  
\begin{theorem}[K\"uhnel \cite{Kuehnel95TightPolySubm} for $k=2$; Novik and Swartz \protect{\cite[Theorem 4.4]{Novik08SocBuchsMod}} for higher $k$]
\label{thm:kuhnel}
Let $C$ be a triangulation of a $2k$-dimensional manifold. Then we have
\[
{f_0(C) -k-2\choose k+1} \ge (-1)^k \binom{2k+1}{k+1}(\chi(C) - 2),
\]
with equality if and only if $C$ is $(k+1)$-neighborly (and thus tight).
\end{theorem}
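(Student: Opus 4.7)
My plan is to reduce the statement, via the Klee-Dehn-Sommerville relations, to an upper bound on $g_{k+1}(C) = h_{k+1}(C) - h_k(C)$, and then invoke the manifold version of the Upper Bound Theorem to establish that bound.

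The reduction is immediate: applying the Klee-Dehn-Sommerville relation from Section 2 to a $2k$-manifold with index $i=k$ gives
\[
g_{k+1}(C) \;=\; h_{k+1}(C) - h_k(C) \;=\; (-1)^k \binom{2k+1}{k+1}\bigl(\chi(C) - 2\bigr),
\]
so the theorem is equivalent to $g_{k+1}(C) \le \binom{f_0(C) - k - 2}{k+1}$ with the same equality case. The equality case is a direct calculation: if $C$ is $(k+1)$-neighborly then $f_i(C) = \binom{f_0}{i+1}$ for $i \le k$, and substituting into the defining formula for $h_j$ followed by a Chu-Vandermonde identity gives $h_j(C) = \binom{f_0 - 2k - 2 + j}{j}$ for $j \le k+1$; Pascal's identity then yields $g_{k+1}(C) = \binom{f_0 - k - 1}{k+1} - \binom{f_0 - k - 2}{k} = \binom{f_0 - k - 2}{k+1}$, establishing equality.

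For the inequality in general I would pass to the Artinian reduction $A := \mathbb{F}[C]/(\theta_1,\ldots,\theta_{2k+1})$ of the Stanley-Reisner ring modulo a generic linear system of parameters. By Schenzel's formula for Buchsbaum complexes, $\dim_{\mathbb{F}} A_j$ is an explicit modification of $h_j(C)$ by the lower Betti numbers $\tilde\beta_i(C)$, $i < j$. The deep input is a manifold analogue of the Hard Lefschetz property: multiplication by a generic linear form $\omega \in A_1$ is injective on $A_j$ for $j \le k$, which is Kalai's generic rigidity theorem for $j = 2$ and its higher-degree extensions due to Novik-Swartz and Adiprasito. This forces $\dim(A_{k+1}/\omega A_k)$ to be controlled by the number of squarefree monomial generators of degree $k+1$ of the Stanley-Reisner ideal, namely $\binom{f_0}{k+1} - f_k(C)$. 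Combining with Schenzel's correction terms and simplifying via Dehn-Sommerville yields $g_{k+1}(C) \le \binom{f_0 - k - 2}{k+1}$; equality forces $f_k(C) = \binom{f_0}{k+1}$ and iteratively $f_i(C) = \binom{f_0}{i+1}$ for all $i \le k$, which is $(k+1)$-neighborliness.

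The hard part is the last step. The injectivity of the generic Lefschetz map in degrees up to $k$ on the Artinian reduction of a manifold is the deep commutative-algebraic ingredient; it is cleanest for rationally-orientable homology manifolds over $\mathbb{Q}$ and requires extra care in positive characteristic or for non-orientable manifolds. The proof cannot be purely elementary: already for $k=2$ the inequality is nontrivial because $g_{k+1}$ depends on $f_1$ and $f_2$ with opposite signs, so neither the trivial bound $f_2 \le \binom{f_0}{3}$ nor the Lower Bound Theorem $f_1 \ge 5f_0 - 15$ is sufficient in isolation; the interaction between these that gives the sharp bound is precisely what the Lefschetz/Schenzel machinery encodes.
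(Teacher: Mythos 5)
The paper does not actually prove this theorem: it is quoted verbatim from K\"uhnel (for $k=2$) and Novik--Swartz (for general $k$), so there is no in-paper argument to compare against. On its own terms, your reduction is correct and is indeed the standard packaging of the result: Klee's Dehn--Sommerville relation with $d=2k$ and $i=k$ gives $g_{k+1}(C)=(-1)^k\binom{2k+1}{k+1}(\chi(C)-2)$ (using $\binom{2k+1}{k}=\binom{2k+1}{k+1}$), and your computation showing $g_{k+1}=\binom{f_0-k-2}{k+1}$ for a $(k+1)$-neighborly manifold is right.

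The gap is in the central inequality $g_{k+1}\le\binom{f_0-k-2}{k+1}$, and your proposed mechanism is internally inconsistent. You claim $\dim(A_{k+1}/\omega A_k)$ is controlled by $\binom{f_0}{k+1}-f_k(C)$, the number of degree-$(k+1)$ non-faces; but for a $(k+1)$-neighborly manifold that quantity is $0$, while by your own equality computation $g_{k+1}=\binom{f_0-k-2}{k+1}>0$. The correct Macaulay-type bound comes from the opposite direction: $(A/\omega A)_{k+1}$ is a quotient of the degree-$(k+1)$ piece of a polynomial ring in $h_1-1=f_0-2k-2$ variables, whose dimension is exactly $\binom{f_0-k-2}{k+1}$; the non-faces enter only in the equality analysis, where a minimal non-face of degree at most $k+1$ generically survives as a nonzero ideal element and forces strict inequality. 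Separately, the input you invoke --- injectivity of a generic Lefschetz map on $A_j$ for all $j\le k$ --- is essentially the manifold $g$-theorem, a far stronger and much later result than the one the paper cites: the Novik--Swartz proof rests on their socle theorem for Buchsbaum complexes (nonnegativity and the $M$-sequence property of the $h''$-numbers) combined with Poincar\'e duality, not on any Lefschetz property, which for manifolds was established only a decade later. So what you have is a plausible roadmap rather than a proof: the one step carrying all the content is both quantitatively misstated and deferred to machinery whose applicability (orientability, characteristic) you flag but do not resolve.
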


\Cref{thm:kuhnel} was first conjectured as \cite[Conjecture B]{Kuehnel95TightPolySubm}.

\begin{theorem}[Murai \protect{\cite[Thm. 5.3(i)]{Murai15GradedBettiNumbers}}]
\label{thm:lss}
Let $C$ be a triangulation of a normal pseudo-manifold with $n$ vertices and of dimension $d\geq 3$. Then we have
\[
g_2 \ge { d+2 \choose 2} \beta_1(C, \F).
\]
with equality if and only if $C$ is stacked.
\end{theorem}

In \Cref{thm:lss}, the case of equality in dimension $\geq 4$ is due to Novik and Swartz \cite{Novik08SocBuchsMod}, and in dimension $d=3$ to Bagchi \cite{Bha2016} (confirming \cite[Problem 5.3]{Novik08SocBuchsMod}).

Naturally, for any triangulation we have that ${h_1 \choose 2}  \geq g_2$, with equality being equivalent to $2$-neighborliness. Hence, triangulations achieving the equality ${h_1 \choose 2}  = {d+2 \choose 2} \beta_1(C, \F)$ are the ones that are both stacked and $2$-neighborly. These are called {\em tight-neighborly} and were conjectured to be $\F$-tight by Lutz, Sulanke and Swartz~\cite{Lutz08FVec3Mnf}. This conjecture was confirmed in dimension three by Burton, Datta, Singh and Spreer \cite{Burton14SepIndex2Spheres} and in $d\ge 4$ by Effenberger \cite{Effenberger09StackPolyTightTrigMnf}. 

\begin{corollary}
\label{cor:stackedtype}
Let $C$ be a triangulation of an $\F$-orientable $d$-manifold with $n$ vertices, $d\geq 3$. Then we have
\[
{n - d- 1 \choose 2} = {h_1 \choose 2}  \ge {d+2 \choose 2} \beta_1(C, \F),
\] 
with equality if and only if $C$ is tight-neighborly (which implies $\F$-tightness).
\end{corollary}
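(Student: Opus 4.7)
The plan is to chain together two inequalities that are already established in the excerpt and to track the equality cases carefully. The left-hand equation $\binom{n-d-1}{2}=\binom{h_1}{2}$ is just a restatement of $h_1=f_0-(d+1)=n-d-1$, which is immediate from Proposition 3.11 (or equivalently from the standard definition of the $h$-vector given in Section 2.1).

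Next I would prove the elementary inequality $\binom{h_1}{2}\ge g_2$, valid for any pure $d$-complex, with equality if and only if $C$ is $2$-neighborly. Writing $g_2=h_2-h_1$ and substituting $h_1=f_0-(d+1)$ and $h_2=f_1-df_0+\binom{d+1}{2}$, the inequality $\binom{h_1}{2}-g_2\ge 0$ simplifies after routine manipulation to $\binom{f_0}{2}\ge f_1$. This is trivially true (each edge joins two distinct vertices) and equality holds exactly when the $1$-skeleton is complete, i.e.\ when $C$ is $2$-neighborly.

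Now I would invoke Theorem 5.16 (Novik--Swartz / Bagchi), which says $g_2\ge \binom{d+2}{2}\beta_1(C,\F)$ for an $\F$-orientable $d$-manifold with $d\ge 3$, with equality if and only if $C$ is stacked. Concatenating the two inequalities gives
\[
\binom{n-d-1}{2}=\binom{h_1}{2}\ \ge\ g_2\ \ge\ \binom{d+2}{2}\beta_1(C,\F),
\]
with equality throughout if and only if $C$ is simultaneously $2$-neighborly and stacked, which is the definition of tight-neighborly.

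The only non-trivial ingredient is the parenthetical claim that tight-neighborly implies $\F$-tightness; this is not something I would prove, but rather cite from the theorems of Burton--Datta--Singh--Spreer \cite{Burton14SepIndex2Spheres} (for $d=3$) and Effenberger \cite{Effenberger09StackPolyTightTrigMnf} (for $d\ge 4$) mentioned in the paragraph immediately preceding the statement. There is no real obstacle here: the corollary is a clean two-step consequence of a trivial inequality and Theorem 5.16, together with a previously cited result about tight-neighborly triangulations; the only point requiring care is to check that the two equality conditions combine correctly, which they do because each pins down an independent combinatorial property ($2$-neighborliness of the $1$-skeleton, and stackedness).
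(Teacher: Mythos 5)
Your proposal is correct and follows essentially the same route as the paper: the corollary is obtained by chaining the elementary inequality $\binom{h_1}{2}\ge g_2$ (equality iff $2$-neighborly, which as you verify reduces to $\binom{f_0}{2}\ge f_1$) with \Cref{thm:lss}, and the parenthetical tightness claim is cited from Burton--Datta--Singh--Spreer and Effenberger exactly as in the paragraph preceding the statement. Your explicit algebraic verification that $\binom{h_1}{2}-g_2=\binom{f_0}{2}-f_1$ is a correct elaboration of a step the paper leaves implicit.
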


An infinite family of tight triangulations of stacked type is described by K\"uhnel in \cite{Kuehnel95TightPolySubm}. Many more such examples -- including an infinite family as well as numerous sporadic examples -- are constructed in \cite{BDSSConstructionTight} by a systematic search using a set of conditions first formulated in \cite{Datta12InfFamTightTrig}. 

Tight triangulations of stacked and neighborly types are opposite in the following sense. Assume the manifold satisfies Poincar\'e duality, that is, $\F$ is arbitrary if $\manifold$ is orientable and it is of characteristic two if not.
In the neighborly case, the underlying $d$-manifold $\manifold$ must be $(d/2-1)$-connected and thus the only non-zero Betti numbers are $\tilde \beta_0(\manifold)=\beta_{2k}(\manifold)=1$ and  $\beta_k = (-1)^k (\chi(C) - 2)$. 
In the stacked case, on the other hand, we must have all Betti numbers equal to zero except 
$\tilde \beta_0(\manifold)=\beta_{2k}(\manifold)=1$ and  $\beta_1 = \beta_{d-1}$.
Nonetheless, both cases can be unified by \Cref{thm:bagchi-intro}, which has been the initial motivation to study the $\sep$- and $\mu$-vectors of a simplicial complex.

\medskip

Since in \Cref{thm:bagchi-intro} the Betti numbers of a tight triangulation must attain an upper bound in form of its $\mu$-vector, and since tight triangulations are conjectured to be minimal (see \Cref{conj:LutzKuehnel}), one might conjecture that the vertex links of all tight combinatorial $d$-manifolds must have $\sep$-vectors of Billera-Lee spheres with matching $f$-vectors. However, this is not the case, as the following example shows. Tight triangulations do not always have vertex links with maximal $\sep$-vector entries. 
This fact is somewhat surprising in light of \Cref{conj:LutzKuehnel}.

\begin{example}
\label{ex:tight}
Consider the $15$-vertex triangulation of $(S^1\!\!\sim\!\!S^3) \# (\mathbb{C}P^2)^{\# 5}$ constructed in~\cite{Kuehnel99CensusTight} (here, $\sim$ denotes the twisted product). It has Betti numbers over the field with two elements $\mathbb{F}_2$ of $\tilde{\beta}_1 = 1= \tilde{\beta}_3$ and $\tilde{\beta}_2 =5$. Its automorphism group is transitive on its vertices and its vertex links have $f$-vector $(1,14,64,100,50)$  and  $\sep$-vector  $(1/15, 1/15, 1/3, 1/15,1/15)$. It follows that the $\mu$-vector equals $(1,1,5,1,1)$ and hence the triangulation is $\mathbb{F}_2$-tight due to \Cref{thm:bagchi-intro}.

However, the Billera-Lee sphere with $f$-vector $(1,14,64,100,50)$ has $\sep_0 = 71/792$ which is considerably higher than $1/15$. \end{example}

\subsection{Triangulated $4$-manifolds with transitive automorphism group}
\label{ssec:4mflds}

\Cref{conj:general-intro} and \Cref{thm:upperbound} provide upper bounds for all entries of the $\sep$-vector in terms of the $f$-vector of a triangulated sphere.

For $d=3$, let $\sphere$ and $\altsphere$ be $n$-vertex $3$-spheres, $\sphere$ stacked and $\altsphere$ $2$-neighborly. Then we have that $\sep_1 (\sphere) = 0 = \sep_0 (\altsphere)$, and thus $\sep_0 (\sphere) = {n-4 \choose 2}/(n+1){6 \choose 2}$ and $\sep_1 (\altsphere) = 2{n-3 \choose 3}/(n+1){6 \choose 3}$. This has the following implications for triangulated $4$-manifolds:

Let $\manifold$ be an $(n+1)$-vertex, $2$-neighborly $4$-manifold with $m$ triangles. A simple calculation shows that then 
$$ \frac{2}{3}(2n^2-3n-5) \leq m \leq {n+1 \choose 3} .$$
Whenever the lower bound is satisfied, all vertex links are $(n-1)$-vertex stacked $3$-spheres and we necessarily have $\mu_1(\manifold) = {n-4 \choose 2}/{6 \choose 2}$ and $\mu_2 = 0$. Similarly, whenever the upper bound is attained, all vertex links are $(n-1)$-vertex $2$-neighborly $3$-spheres, and we have $\mu_1(\manifold) = 0$ and $\mu_2 = 2{n-3 \choose 3}/{6 \choose 3}$. By \Cref{cor:3d} this transforms into
\[
 \tilde\beta_1 (\manifold) \leq {n-4 \choose 2}/{6 \choose 2}; \quad \tilde\beta_2 (\manifold) = 0 
\]
for a $2$-neighborly $4$-manifold $\manifold$ with only stacked vertex links, and 
\[
 \tilde\beta_1 (\manifold') = 0; \quad \tilde\beta_2 (\manifold') \leq  2{n-3 \choose 3}/{6 \choose 3} 
 \]
for a $3$-neighborly $4$-manifold $\manifold'$. Both bounds coincide with existing bounds on $4$-manifolds, see \Cref{thm:kuhnel} and \Cref{cor:stackedtype}. Moreover, equality in these bounds here always implies by \Cref{thm:bagchi-intro} that the triangulation is both minimal and tight. Note, however, that bounds obtained this way rely on the triangulated $4$-manifold to have a prescribed $f$-vector of a certain kind. 

Here we want to generalize these results using the upper bound from \Cref{conj:general-intro}. To keep calculations simple we only consider the case where all vertex links have the same $f$-vector (as is the case for $2$-neighborly and stacked, as well as for 3-neighborly triangulations, as explained above). This also includes the case when $\manifold$ has vertex-transitive automorphism group (see \cite{Lutz99TrigMnfFewVertGeom3Mnf} for a classification of such triangulations for small numbers of vertices).

\medskip

\begin{figure}[htb]
  \begin{tabular}{r|r|r|r|r|l}
    \toprule
    $f(\lk)$	& $f(\manifold)$	& $\chi$	& Thm.~\ref{conj:general-intro} & Triangulations\\
    		& 			& 			& $(\tilde\beta_1,\tilde\beta_2) \leq$	&  & \\
    \midrule

$(5,10)$ &	$(6,15,20,15,6)$	& $2$	&$(0.00,0.00)$ & $\partial \Delta_4$\\ 
$(8,28)$ &	$(9,36,84,90,36)$	& $3$	&$(0.00,1.00)$ &$(\mathbb{C}P^2)_9$ \cite{Kuehnel83The9VertComplProjPlane}\\ 
$(9,36)$ &	$(10,45,120,135,54)$	& $4$	&$(0.00,2.00)$ & Does not exist \cite{Kuehnel83Uniq3Nb4MnfFewVert}\\ 
$(10,30)$ &	$(11,55,110,110,44)$	& $0$	&$(1.00,0.00)$ & $(S^1 \times S^3)_{11}$ \cite{Kuehnel95TightPolySubm}\\ 
$(11,36)$ &	$(12,66,144,150,60)$	& $0$	&$(1.17,0.34)$ &\\ 
$(11,41)$ &	$(12,66,164,180,72)$	& $2$	&$(0.75,1.5)$ &\\ 
$(11,46)$ &	$(12,66,184,210,84)$	& $4$	&$(0.41,2.82)$ &\\ 
$(11,51)$ &	$(12,66,204,240,96)$	& $6$	&$(0.12,4.24)$ &\\ 
$(13,48)$ &	$(14,91,224,245,98)$	& $0$	&$(1.77,1.55)$ &\\ 
$(13,63)$ &	$(14,91,294,350,140)$	& $7$	&$(0.76,6.53)$ &\\ 
$(13,78)$ &	$(14,91,364,455,182)$	& $14$	&$(0.00,12.00)$ &\\ 
$(14,46)$ &	$(15,105,230,240,96)$	& $-4$	&$(3.00,0.00)$ &\\ 
$(14,50)$ &	$(15,105,250,270,108)$	& $-2$	&$(2.48,0.96)$ &\\ 
$(14,52)$ &	$(15,105,260,285,114)$	& $-1$	&$(2.33,1.66)$ &\\ 
$(14,54)$ &	$(15,105,270,300,120)$	& $0$	&$(2.10,2.20)$ &\\ 
$(14,56)$ &	$(15,105,280,315,126)$	& $1$	&$(2.00,3.00)$ &\\ 
$(14,60)$ &	$(15,105,300,345,138)$	& $3$	&$(1.66,4.33)$ &\\ 
$(14,62)$ &	$(15,105,310,360,144)$	& $4$	&$(1.49,4.98)$ &\\ 
$(14,64)$ &	$(15,105,320,375,150)$	& $5$	&$(1.34,5.68)$ &$((S^1\!\!\sim\!\!S^3)\# 5(\mathbb{C}P^2))_{15}$ \cite{Kuehnel99CensusTight}\\ 
$(14,66)$ &	$(15,105,330,390,156)$	& $6$	&$(1.27,6.54)$ &\\ 
$(14,68)$ &	$(15,105,340,405,162)$	& $7$	&$(1.10,7.21)$ &\\ 
$(14,70)$ &	$(15,105,350,420,168)$	& $8$	&$(0.95,7.91)$ &\\ 
$(14,72)$ &	$(15,105,360,435,174)$	& $9$	&$(0.88,8.77)$ &\\ 
$(14,74)$ &	$(15,105,370,450,180)$	& $10$	&$(0.84,9.69)$ &\\ 
$(14,76)$ &	$(15,105,380,465,186)$	& $11$	&$(0.61,10.22)$ &\\ 
$(14,78)$ &	$(15,105,390,480,192)$	& $12$	&$(0.51,11.02)$ &\\ 
$(14,80)$ &	$(15,105,400,495,198)$	& $13$	&$(0.45,11.91)$ &\\ 
$(14,82)$ &	$(15,105,410,510,204)$	& $14$	&$(0.42,12.85)$ &\\ 
$(14,84)$ &	$(15,105,420,525,210)$	& $15$	&$(0.19,13.39)$ &\\ 
$(14,86)$ &	$(15,105,430,540,216)$	& $16$	&$(0.09,14.19)$ &\\ 
$(14,88)$ &	$(15,105,440,555,222)$	& $17$	&$(0.04,15.08)$ &\\ 
$(14,90)$ &	$(15,105,450,570,228)$	& $18$	&$(0.01,16.02)$ &\\ 
$(15,60)$ &	$(16,120,320,360,144)$	& $0$	&$(2.60,3.20)$ &\\ 
$(15,75)$ &	$(16,120,400,480,192)$	& $8$	&$(1.44,8.88)$ &\\ 
$(15,105)$ &	$(16,120,560,720,288)$	& $24$	&$(0.00,22.00)$&$K3_{16}$ \cite{Casella01TrigK3MinNumVert} \\

    \bottomrule
  \end{tabular}
  \caption{Upper bound on Betti numbers coming from \Cref{conj:general-intro} for $2$-neighborly triangulated $4$-manifolds with vertex links of constant $f$-vectors. \label{fig:upperbound}}
\end{figure}

Suppose that $\manifold$ is an orientable connected $4$-manifold with Betti numbers $\tilde\beta_0 +1= \tilde\beta_4 = 1$, $\tilde\beta_1 = \tilde\beta_3 = k$ and $\tilde\beta_2 = \ell$ and with vertex set $V$. A simple calculation shows that $f_i(\manifold)$ satisfies the following identity:
\[
f_i (\manifold) = \frac{1}{i+1} \sum \limits_{v \in V} f_{i-1} (\lk_v (\manifold)); \qquad \qquad 0\leq i \leq 4.
\]

Assuming that all links have equal $f$-vector $f (\lk (\manifold)) = (f_{-1}, \ldots , f_{3})$ this simplifies to
\[
f_i (\manifold)/f_0(\manifold) = \frac{f_{i-1}}{i+1} ; \qquad \qquad 0\leq i \leq 4.
\]
Since the links of $\manifold$ are $3$-spheres we have $f_2 = 2(f_1-f_0)$ and $f_3 = f_1 - f_0$ and we set $n:=f_0$ and $e:=f_1$.

Now combining the inequality $\tilde\beta_i (\manifold) \leq \mu(\manifold)$ with the definition of the $\mu$-vector and the upper bound from \Cref{conj:general-intro} we have the following statements.

{\small
\begin{align}
        \label{eq:beta1}
	\tilde\beta_1 (\manifold) \leq&\,\, \mu_1 (M) \nonumber \\
	=&\,\, f_0(\manifold)  \cdot \sep_0(\lk (\manifold)) \nonumber \\
        \leq &\,\, f_0(\manifold) \cdot\sep_0 (G(n,e,d)) \nonumber \\
        =&\,\, f_0 (\manifold) \left ( \frac{1}{n+1} - \frac{1}{k+1} + \frac{1}{(j+1)(j+2)} + \frac{n-k-1}{(d+2)(d+3)}\right )
\end{align}
}%
where $G(n,e,d)$ denotes the Billera-Lee graph, $k, j \in \mathbb{Z}$ are given by the fact that $k$ is the largest integer such that $e = j + {k \choose 2} + \frac{n-k-1}{d+1}$. By \Cref{cor:3d} this yields:

{\small
\begin{align}
        \label{eq:beta2}
	\tilde\beta_2 (\manifold) \leq&\,\, \mu_2 (M) \nonumber \\
	=&\,\, f_0(\manifold) \cdot \sep_1(\lk (\manifold)) \nonumber \\
        \leq & \,\,f_0(\manifold) \cdot\sep_1 (G(n,e,d)) \nonumber \\
        =&\,\, f_0 (\manifold) \left ( 2 \sep_0 (G(n,e,d)) - \frac{n^2 - 4n + 5}{5(n+1)} + \frac{e}{30} \right ) \nonumber \\
        =& \,f_0 (\manifold) \left ( 2 \left ( \frac{1}{n+1} - \frac{1}{k+1} + \frac{1}{(j+1)(j+2)} + \frac{n-k-1}{(d+2)(d+3)}\right ) - \frac{n^2 - 4n + 5}{5(n+1)} + \frac{e}{30} \right ).
\end{align}}

While these bounds are quite difficult to analyse by hand, they provide the basis for an algorithm to find a lower bound on the number of faces necessary to triangulate a combinatorial $4$-manifold $\manifold$ with fixed first and second Betti numbers (here subject to the additional condition that all vertex links in the triangulation have the same $f$-vector): 
\begin{enumerate}
  \item Given $\tilde\beta_1(\manifold)$ and $\tilde\beta_2(\manifold)$, go through all theoretically possible $f$-vectors of a triangulated $4$-manifold of Euler characteristic $\chi(\manifold) = 2- 2\tilde\beta_1 + \tilde\beta_2$ (with all vertex links sharing the same $f$-vector) in lexicographically increasing order. 
  \item For each such $f$-vector, check whether the upper bound on $\mu_1$ and $\mu_2$ (\Cref{eq:beta1,eq:beta2}) attains or exceeds $\tilde\beta_1(\manifold)$ and $\tilde\beta_2(\manifold)$ componentwise.
  \item The first $f$-vector satisfying this condition acts as a lower bound to triangulate~$\manifold$.
\end{enumerate}

In the case of $2$-neighborly triangulations of $4$-manifolds (i.e., the only case where triangulations are not trivially non-tight) feasible $f$-vectors together with their Euler characteristic and an upper bound on their first and second Betti numbers are listed in \Cref{fig:upperbound}.

\bibliographystyle{plain}

\begin{thebibliography}{10}

\bibitem{Adiprasito15ToricChordality}
Karim Adiprasito.
\newblock Toric chordality.
\newblock {\em J.~Math.~Pures Appl.}, 108(5):783--807, 2017.

\bibitem{Adiprasito18Lefschetz}
Karim Adiprasito.
\newblock {Combinatorial Lefschetz theorems beyond positivity}.
\newblock Preprint, 73 pages, 2018.
\newblock \href{https://arxiv.org/abs/1812.10454}{arXiv:1812.10454 [math.CO]}.

\bibitem{ANS16HigherChordality}
Karim Adiprasito, Eran Nevo, and Jose Samper.
\newblock Higher chordality: from graphs to complexes.
\newblock {\em Proc. Amer. Math. Soc.}, 144(8):3317--3329, 2016.

\bibitem{Alexandrov38ClassClosedSurf}
Aleksandr Alexandrov.
\newblock {O}n a class of closed surfaces.
\newblock {\em Recueil Math. ({M}oscow)}, 4:69--72, 1938.

\bibitem{Altshuler76CombMnf9VertAll}
Amos Altshuler and Leon Steinberg.
\newblock An enumeration of combinatorial $3$-manifolds with $9$ vertices.
\newblock {\em Discrete Math.}, 16:91--108, 1976.

\bibitem{Bagchi15TightnessCrit}
Bhaskar Bagchi.
\newblock A tightness criterion for homology manifolds with or without
  boundary.
\newblock {\em European J. Combin.}, 46:10--15, 2015.

\bibitem{Bha2016}
Bhaskar Bagchi.
\newblock The mu vector, {M}orse inequalities and a generalized lower bound
  theorem for locally tame combinatorial manifolds.
\newblock {\em European J. Combin.}, 51:69--83, 2016.

\bibitem{Bagchi14StellSpheresTightness}
Bhaskar Bagchi and Basudeb Datta.
\newblock On stellated spheres and a tightness criterion for combinatorial
  manifolds.
\newblock {\em European J. Combin.}, 36:294--313, 2014.

\bibitem{BDSTight3Mflds}
Bhaskar Bagchi, Basudeb Datta, and Jonathan Spreer.
\newblock Tight triangulations of closed $3$-manifolds.
\newblock {\em European J. Combin.}, 54:103--120, 2016.

\bibitem{Bagchi16Tight3Mflds}
Bhaskar Bagchi, Basudeb Datta, and Jonathan Spreer.
\newblock A characterization of tightly triangulated 3-manifolds.
\newblock {\em European J. Combin.}, 61:133--137, 2017.

\bibitem{Banchoff65TightEmb3DimPolyMnf}
Thomas Banchoff.
\newblock {T}ightly embedded {$2$}-dimensional polyhedral manifolds.
\newblock {\em Amer. J. Math.}, 87:462--472, 1965.

\bibitem{Banchoff71TPPTightNMnfWithBd}
Thomas Banchoff.
\newblock {T}he two-piece property and tight {$n$}-manifolds-with-boundary in
  {$E\sp{n}$}.
\newblock {\em Trans. Amer. Math. Soc.}, 161:259--267, 1971.

\bibitem{Billera81ProofSuffMcMullenCondFVec}
Louis Billera and Carl Lee.
\newblock {A} proof of the sufficiency of {M}c{M}ullen's conditions for
  {$f$}-vectors of simplicial convex polytopes.
\newblock {\em J. Combin. Theory Ser. A}, 31(3):237--255, 1981.

\bibitem{Bjoerner09CombAlexDuality}
Anders Bj\"orner and Martin Tancer.
\newblock Note: {C}ombinatorial {A}lexander duality---a short and elementary
  proof.
\newblock {\em Discrete Comput. Geom.}, 42(4):586--593, 2009.

\bibitem{Bodlaender04ANote}
Hans Bodlaender, Hans Bodlaender, Thomas Wolle, and Thomas Wolle.
\newblock A note on the complexity of network reliability problems.
\newblock {\em IEEE Trans. Inf. Theory}, 47:1971--1988, 2004.

\bibitem{Burton14SepIndex2Spheres}
Benjamin Burton, Basudeb Datta, Nitin Singh, and Jonathan Spreer.
\newblock Separation index of graphs and stacked 2-spheres.
\newblock {\em J. Combin. Theory Ser. A}, 136:184--197, 2015.

\bibitem{BDSSConstructionTight}
Benjamin Burton, Basudeb Datta, Nitin Singh, and Jonathan Spreer.
\newblock {A construction principle for tight and minimal triangulations of
  manifolds}.
\newblock {\em Exp. Math.}, page 15 pages, 2016.

\bibitem{Casella01TrigK3MinNumVert}
Mario Casella and Wolfgang K{\"u}hnel.
\newblock {A} triangulated {$K3$} surface with the minimum number of vertices.
\newblock {\em Topology}, 40(4):753--772, 2001.

\bibitem{Datta12InfFamTightTrig}
Basudeb Datta and Nitin Singh.
\newblock An infinite family of tight triangulations of manifolds.
\newblock {\em J. Combin. Theory Ser. A}, 120(8):2148--2163, 2013.

\bibitem{DeLoera10Triangulations}
Jes\'us De~Loera, J\"org Rambau, and Francisco Santos.
\newblock {\em Triangulations}, volume~25 of {\em Algorithms and Computation in
  Mathematics}.
\newblock Springer-Verlag, Berlin, 2010.
\newblock Structures for algorithms and applications.

\bibitem{Dirac60RigidCircuitGraphs}
Gabriel Dirac.
\newblock On rigid circuit graphs.
\newblock {\em Abh. Math. Sem. Univ. Hamburg}, 25:71--76, 1961.

\bibitem{Effenberger09StackPolyTightTrigMnf}
Felix Effenberger.
\newblock Stacked polytopes and tight triangulations of manifolds.
\newblock {\em J. Combin. Theory Ser. A}, 118(6):1843--1862, 2011.

\bibitem{Firsching18PolytopePage}
Moritz Firsching.
\newblock Personal website.
\newblock {\href{https://page.mi.fu-berlin.de/moritz/polytopes}{https://page.mi.fu-berlin.de/moritz/polytopes}}.

\bibitem{FirschingRealizability2017}
Moritz Firsching.
\newblock Realizability and inscribability for simplicial polytopes via
  nonlinear optimization.
\newblock {\em Mathematical Programming}, 166(1--2):273 -- 295, 2017.

\bibitem{Fulkerson65ChordalPEO}
Delbert Fulkerson and Oliver Gross.
\newblock Incidence matrices and interval graphs.
\newblock {\em Pacific J. Math.}, 15:835--855, 1965.

\bibitem{HandbookDCG}
Jacob Goodman and Joseph O'Rourke, editors.
\newblock {\em Handbook of discrete and computational geometry}.
\newblock Discrete Mathematics and its Applications (Boca Raton). Chapman \&
  Hall/CRC, Boca Raton, FL, third edition, 2017.

\bibitem{Hatcher2002AlgTop}
Allen Hatcher.
\newblock {\em Algebraic Topology}.
\newblock Cambridge University Press, 2002.

\bibitem{Kalai87RigidityLBT}
Gil Kalai.
\newblock {R}igidity and the lower bound theorem. {I}.
\newblock {\em Invent. Math.}, 88(1):125--151, 1987.

\bibitem{Klee64CombAnaloguePoincareDual}
Victor Klee.
\newblock {A} combinatorial analogue of {P}oincar\'e's duality theorem.
\newblock {\em Canad. J. Math.}, 16:517--531, 1964.

\bibitem{Kuehnel95TightPolySubm}
Wolfgang K{\"u}hnel.
\newblock {\em {T}ight polyhedral submanifolds and tight triangulations},
  volume 1612 of {\em Lecture Notes in Math.}
\newblock Springer-Verlag, Berlin, 1995.

\bibitem{Kuehnel83The9VertComplProjPlane}
Wolfgang K{\"u}hnel and Thomas Banchoff.
\newblock {T}he {$9$}-vertex complex projective plane.
\newblock {\em Math. Intelligencer}, 5(3):11--22, 1983.

\bibitem{Kuehnel83Uniq3Nb4MnfFewVert}
Wolfgang K{\"u}hnel and Gunter Lassmann.
\newblock {T}he unique {$3$}-neighborly {$4$}-manifold with few vertices.
\newblock {\em J. Combin. Theory Ser. A}, 35(2):173--184, 1983.

\bibitem{Kuehnel99CensusTight}
Wolfgang K{\"u}hnel and Frank Lutz.
\newblock {A} census of tight triangulations.
\newblock {\em Period. Math. Hungar.}, 39(1-3):161--183, 1999.
\newblock {D}iscrete geometry and rigidity ({B}udapest, 1999).

\bibitem{Lutz08ManifoldPage}
Frank Lutz.
\newblock {T}he {M}anifold {P}age.
\newblock \href{http://page.math.tu-berlin.de/~lutz/stellar}{http://page.math.tu-berlin.de/\~{}lutz/stellar}.

\bibitem{Lutz99TrigMnfFewVertGeom3Mnf}
Frank Lutz.
\newblock {T}riangulated manifolds with few vertices: {G}eometric
  $3$-manifolds.
\newblock Preprint, 48 pages, 1999.
\newblock \href{https://arxiv.org/abs/math/0311116v1}{arXiv:math/0311116v1
  [math.GT]}.

\bibitem{Lutz08ThreeMfldsWith10Vertices}
Frank Lutz.
\newblock Combinatorial 3-manifolds with 10 vertices.
\newblock {\em Beitr\"age Algebra Geom.}, 49(1):97--106, 2008.

\bibitem{Lutz08FVec3Mnf}
Frank Lutz, Thom Sulanke, and Ed~Swartz.
\newblock $f$-vectors of $3$-manifolds.
\newblock {\em Electron. J. Comb.}, 16(2):{Research Paper 13, 33}, 2009.

\bibitem{Mahmoud2008PolyaUrnModel}
Hosam Mahmoud.
\newblock {\em Polya Urn Models}.
\newblock Chapman \& Hall/CRC, 1 edition, 2008.

\bibitem{MiglioreNagel03Polytopes}
Juan Migliore and Uwe Nagel.
\newblock Reduced arithmetically {G}orenstein schemes and simplicial polytopes
  with maximal {B}etti numbers.
\newblock {\em Adv. Math.}, 180(1):1--63, 2003.

\bibitem{MillerSturmfels05CCA}
Ezra Miller and Bernd Sturmfels.
\newblock {\em Combinatorial commutative algebra}, volume 227 of {\em Graduate
  Texts in Mathematics}.
\newblock Springer-Verlag, New York, 2005.

\bibitem{Murai15GradedBettiNumbers}
Satoshi Murai.
\newblock Tight combinatorial manifolds and graded {B}etti numbers.
\newblock {\em Collect. Math.}, 66(3):367--386, 2015.

\bibitem{Murai17FaceNumbersFundGroup}
Satoshi Murai and Isabella Novik.
\newblock Face numbers and the fundamental group.
\newblock {\em Israel J. Math.}, 222(1):297--315, 2017.

\bibitem{Novik17NormalizedSigma}
Satoshi Murai and Isabella Novik.
\newblock Face numbers of manifolds with boundary.
\newblock {\em Int. Math. Res. Not. IMRN}, 2017(12):3603--3646, 2017.

\bibitem{Nagel08EmptySimplices}
Uwe Nagel.
\newblock Empty simplices of polytopes and graded {B}etti numbers.
\newblock {\em Discrete Comput. Geom.}, 39(1-3):389--410, 2008.

\bibitem{NevoNovinsky11}
Eran Nevo and Eyal Novinsky.
\newblock A characterization of simplicial polytopes with {$g_2=1$}.
\newblock {\em J. Combin. Theory Ser. A}, 118(2):387--395, 2011.

\bibitem{Novik08SocBuchsMod}
Isabella Novik and Ed~Swartz.
\newblock {S}ocles of {B}uchsbaum modules, complexes and posets.
\newblock {\em Adv. Math.}, 222(6):2059--2084, 2009.

\bibitem{Pachner87KonstrMethKombHomeo}
Udo Pachner.
\newblock {K}onstruktionsmethoden und das kombinatorische
  {H}o\-m\"{o}o\-mor\-phie\-pro\-blem f\"{u}r {T}ri\-an\-gul\-ier\-ung\-en
  kompakter semilinearer {M}annigfaltigkeiten.
\newblock {\em Abh. Math. Sem. Uni. Hamburg}, 57:69--86, 1987.

\bibitem{Pachner91PLHomeo}
Udo Pachner.
\newblock {PL}-homeomorphic manifolds are equivalent by elementary shellings.
\newblock {\em European J.~Combin.}, 12:2:129--145, 1991.

\bibitem{TeraiHibi97}
Naoki Terai and Takayuki Hibi.
\newblock Computation of {B}etti numbers of monomial ideals associated with
  stacked polytopes.
\newblock {\em Manuscripta Math.}, 92(4):447--453, 1997.

\bibitem{Zheng17g2eq2}
Hailun Zheng.
\newblock A characterization of homology manifolds with $g_2 \leq 2$.
\newblock {\em J.~Comb.~Th., Ser.~A}, 153(4):31--45, 2018.

\end{thebibliography}

\end{document}